\author[Fanny Kassel]{Fanny Kassel\footnote{CNRS and Laboratoire Alexander Grothendieck, Institut des Hautes \'Etudes Scientifiques, Universit\'e Paris-Saclay, 35 route de Chartres, 91440 Bures-sur-Yvette, France. Email address: kassel@ihes.fr}}
\newtheorem{thm}{Theorem}[section]
\newtheorem{cor}[thm]{Corollary}
\newtheorem{lem}[thm]{Lemma}
\newtheorem{fact}[thm]{Fact}
\newtheorem{claim}[thm]{Claim}
\newtheorem{condition}[thm]{Condition}
\newtheorem*{LocalRigid}{Local rigidity}
\newtheorem*{MostowRigid}{Mostow rigidity}
\newtheorem*{MargulisRigid}{Margulis superrigidity}
\theoremstyle{definition}
\newtheorem{defn}[thm]{Definition}
\newtheorem{rem}[thm]{Remark}
\newtheorem{rems}[thm]{Remarks}
\newtheorem{ex}[thm]{Example}
\newtheorem{examples}[thm]{Examples}
\newtheorem{notation}[thm]{Notation}
\newcommand{\N}{\mathbb{N}}
\newcommand{\Z}{\mathbb{Z}}
\newcommand{\Q}{\mathbb{Q}}
\newcommand{\R}{\mathbb{R}}
\newcommand{\C}{\mathbb{C}}
\newcommand{\HH}{\mathbb{H}}
\newcommand{\PP}{\mathbb{P}}
\newcommand{\SL}{\mathrm{SL}}
\newcommand{\GL}{\mathrm{GL}}
\newcommand{\SO}{\mathrm{SO}}
\newcommand{\OO}{\mathrm{O}}
\newcommand{\PO}{\mathrm{PO}}
\newcommand{\PSL}{\mathrm{PSL}}
\newcommand{\PGL}{\mathrm{PGL}}
\newcommand{\Sp}{\mathrm{Sp}}
\newcommand{\U}{\mathrm{U}}
\newcommand{\SU}{\mathrm{SU}}
\newcommand{\Rrank}{\mathrm{rank}_{\R}}
\newcommand{\Hom}{\mathrm{Hom}}
\newcommand{\ie}{i.e.\ }
\newcommand{\eg}{e.g.\ }
\newcommand{\resp}{resp.\ }
\newcommand{\di}{\partial_{\infty}}
\newcommand{\Int}{\mathrm{Int}}
\newcommand{\Gr}{\mathrm{Gr}}
\newcommand{\Lambdao}{\Lambda^{\mathsf{orb}}}
\newcommand{\Ccore}{\mathcal{C}^{\mathsf{cor}}}
\newcommand*{\longhookrightarrow}{\ensuremath{\lhook\joinrel\relbar\joinrel\rightarrow}}
\begin{document}

\setcounter{chapter}{3}
\setcounter{page}{118}

\chapter{Discrete subgroups of semisimple Lie groups, beyond lattices}

\begin{abstract}
Discrete subgroups of $\SL(2,\R)$ are well understood, and classified by the geometry of the corresponding hyperbolic surfaces.
Discrete subgroups of higher-rank semisimple Lie groups, such as $\SL(n,\R)$ for $n>2$, remain more mysterious.
While lattices in this setting are rigid, there also exist more flexible, ``thinner'' discrete subgroups, which may have large and interesting deformation spaces, giving rise in particular to so-called higher Teichm\"uller theory.
We survey recent progress in constructing and understanding such discrete subgroups from a geometric and dynamical viewpoint.
\end{abstract}

\section{Introduction} \label{sec:intro}

Recall that a Lie group is a group which is also a differentiable manifold.
All Lie groups considered in these notes will be assumed to be real linear Lie groups, \ie closed subgroups of $\GL(N,\R)$ for some $N\in\N$, with finitely many connected components.
We will be specifically interested in such Lie groups which are \emph{noncompact}, since our goal is to study their \emph{infinite} discrete subgroups.

We say that a Lie group $G$ is \emph{simple} if its Lie algebra is simple, \ie nonabelian with no nonzero proper ideals; equivalently, all infinite closed normal subgroups of~$G$ have finite index in~$G$ and are nonabelian.
Simple Lie algebras have been completely classified by \'E.~Cartan, leading to a classification of simple Lie groups up to local isomorphism.
(Recall that two Lie groups $G_1$ and~$G_2$ are said to be \emph{locally isomorphic} if they have the same Lie algebra; equivalently, some finite cover of the identity component of~$G_1$ is isomorphic to some finite cover of the identity component of $G_2$.)
Noncompact simple Lie groups come in several infinite families, given in Table~\ref{table1}, and 17 (up to local isomorphism) additional groups, called \emph{exceptional} (see \eg \cite[Ch.\,X]{hel01}).
\begin{table}[h!]
\centering
\begin{tabular}{|p{1.8cm}|p{2cm}|p{2.4cm}|p{2.6cm}|}
\hline
& & \tabularnewline [-0.4cm]
& \centering Noncompact simple Lie group $G$ & \centering Maximal compact subgroup $K$ & \centering $\Rrank(G)$ \tabularnewline
\hline
\centering A & \centering $\SL(n,\C)$ & \centering $\SU(n)$ & \centering $n-1$\tabularnewline
\centering B & \centering $\SO(2n+1,\C)$ & \centering $\SO(2n+1)$ & \centering $n$\tabularnewline
\centering C & \centering $\Sp(2n,\C)$ & \centering $\Sp(n)$ & \centering $n$\tabularnewline
\centering D & \centering $\SO(2n,\C)$ & \centering $\SO(2n)$ & \centering $n$\tabularnewline
\centering A\,I & \centering $\SL(n,\R)$ & \centering $\SO(n)$ & \centering $n-1$\tabularnewline
\centering A\,II & \centering $\SU^*(2n)$ & \centering $\Sp(n)$ & \centering $n-1$\tabularnewline
\centering A\,III & \centering $\SU(p,q)$ & \centering $\mathrm{S}(\U(p)\times\U(q))$ & \centering $\min(p,q)$\tabularnewline
\centering BD\,I & \centering $\SO(p,q)_0$ & \centering $\SO(p)\times\SO(q)$ & \centering $\min(p,q)$\tabularnewline
\centering D\,III & \centering $\SO^*(2n)$ & \centering $\U(n)$ & \centering $\lfloor n/2 \rfloor$\tabularnewline
\centering C\,I & \centering $\Sp(2n,\R)$ & \centering $\U(n)$ & \centering $n$\tabularnewline
\centering C\,II & \centering $\Sp(p,q)$ & \centering $\Sp(p)\times\Sp(q)$ & \centering $\min(p,q)$\tabularnewline
\hline
\end{tabular}
\caption{List of classical noncompact simple real linear Lie groups, up to local isomorphism. Here $n,p,q\geq 1$ are integers. For types A, A\,I, and A\,II we assume $n\geq 2$, for types D and~D\,III we assume $n\geq 3$, and for type~BD\,I we assume $(p,q)\notin\{ (1,1), (2,2)\}$.}
\label{table1}
\end{table}

We say that a Lie group $G$ is \emph{semisimple} if it is locally isomorphic to a direct product $G_1\times\dots\times G_{\ell}$ of simple Lie groups~$G_i$, called the \emph{simple factors} of~$G$; in that case, if $G$ is connected and simply connected, then it is actually isomorphic to such a direct product $G_1\times\dots\times G_{\ell}$.
For instance, $\SO(2,2)$ and $\SO(4,\C)$ are semisimple (they are locally isomorphic to $\PSL(2,\R)\times\PSL(2,\R)$ and $\PSL(2,\C)\times\PSL(2,\C)$, respectively).
Any connected semisimple Lie group is the identity component (for the~real topology) of the real points of some $\R$-algebraic group (see \cite[\S\,2.14]{che51}).

Infinite discrete subgroups of semisimple Lie groups are important objects that appear in various areas of mathematics, such as geometry, complex analysis, differential equations, number theory, mathematical physics, ergodic theory, representation theory, etc.
There are many motivations for studying these discrete subgroups.
Let us mention three:
\begin{enumerate}[(1)]
  \item \emph{Historical importance:} The study of second-order linear differential equations over~$\C$, in particular by Fuchs, naturally led to the study of discrete subgroups of $\PSL(2,\C)$, in particular by Poincar\'e, and to the celebrated Uniformisation Theorem: any closed Riemann surface of genus $\geq 2$ is a quotient of the hyperbolic plane $\HH^2$ by a discrete subgroup $\Gamma$ of $\PSL(2,\R)$.
  See \eg \cite{saint-gervais-unif} for details.
  \item \emph{Locally symmetric spaces:} Any discrete subgroup $\Gamma$ of a noncompact semisimple Lie group~$G$ defines a Riemannian locally symmetric space $\Gamma\backslash G/K$, where $K$ is a maximal compact subgroup of~$G$.
  These locally symmetric spaces, which include real hyperbolic manifolds $\Gamma\backslash\HH^n$ for $G=\PO(n,1)=\OO(n,1)/\{\pm\mathrm{I}\}$, are geometrically important.
  They naturally appear in representation theory and harmonic analysis, where symmetric spaces $G/K$ play a central role (see \eg \cite{bfs97}).
  \item \emph{Geometric structures on manifolds:} A modern point of view on geometry, which dates back to Klein's 1872 Erlangen program and which has been much developed in the twentieth century especially through the work of Ehresmann and Thurston, is to study manifolds that ``locally look like'' some ``model spaces'' with large ``symmetry groups''.
  Model spaces are typically homogeneous spaces $X=G/H$ where $G$ is a real Lie group (often semisimple).
  Important examples include $X=G/K$ as above, but also $(X,G) = (\mathbb{RP}^n,\PGL(n+1,\R))$ (real projective geometry), $(\mathbb{CP}^n,\PGL(n+1,\C))$ (complex projective geometry), or $(\HH^{p,q},\PO(p,q+1))$ (pseudo-Riemannian hyperbolic geometry in signature $(p,q)$).
  See \cite{gol22-book} for details.
\end{enumerate}

An important class of discrete subgroups of noncompact semisimple Lie groups is the class of \emph{lattices}, namely discrete subgroups of finite covolume for the Haar measure (see Section~\ref{sec:lattices} below).
They play an important role in several fields of mathematics, in addition to the above, such as:
\begin{itemize}
  \item geometric group theory (lattices are finitely presented groups with many desirable properties --- \eg lattices of $\SL(n,\R)$ for $n\geq 3$ have Kazhdan's property (T)),
  \item combinatorics (construction of expander graphs),
  \item number theory (arithmetic groups),
  \item ergodic theory (flows on $\Gamma\backslash G$) and homogeneous dynamics.
\end{itemize}
See \eg \cite{wit15} and references therein.
In some of these settings (in particular ergodic theory and homogeneous dynamics), there is currently active research aiming to extend, to classes of discrete subgroups of infinite covolume, classical results involving lattices.
Infinite-index subgroups of arithmetic groups (and particularly those that are still Zariski-dense, named \emph{thin groups} by Sarnak) have also attracted considerable interest recently, see \eg \cite{kllr19}.

In these notes, we will review a few properties of lattices, and then~fo\-cus on the problem of finding other large classes of infinite discrete subgroups $\Gamma$ of semisimple Lie groups~$G$ with desirable properties, including:
\begin{enumerate}[(1)]
  \item the existence of examples with interesting geometric interpretations,
  \item a good control of the subgroups' behaviour under deformation,
  \item interesting dynamics of $\Gamma$ on certain homogeneous spaces of~$G$.
\end{enumerate}
These properties are typically invariant under replacing $\Gamma$ by a finite-index subgroup.
This will allow us to sometimes reduce to \emph{torsion-free} $\Gamma$: indeed, the Selberg lemma \cite[Lem.\,8]{sel60} states that any finitely generated subgroup of~$G$ admits a finite-index subgroup which is torsion-free.

\section{Lattices} \label{sec:lattices}

Let $G$ be a noncompact semisimple Lie group.
It admits a \emph{Haar measure}, \ie a nonzero Radon measure which is invariant under left and right multiplication; this measure is unique up to scaling.

\begin{defn}
A \emph{lattice} of~$G$ is a discrete subgroup $\Gamma$ of~$G$ such that the quotient $\Gamma\backslash G$ has finite volume for the measure induced by the Haar measure of~$G$.
\end{defn}

If $\Gamma$ is a lattice of~$G$, then the quotient $\Gamma\backslash G$ can be compact (in which case we say that $\Gamma$ is a \emph{cocompact} or \emph{uniform} lattice) or not.

A fundamental result of Borel and Harish-Chandra \cite{bor63,bhc62} states that $G$ always admits both cocompact lattices and noncocompact lattices.

Borel's Density Theorem \cite{bor60} states that lattices are Zariski-dense in~$G$ as soon as $G$ is connected and has no compact simple factors.
This means that if the set of real points of some $\R$-algebraic group contains a lattice of~$G$, then it actually contains the whole of~$G$.

We say that a lattice $\Gamma$ of~$G$ is \emph{irreducible} if for any noncompact, infinite-index, closed normal subgroup $G'$ of~$G$, the projection of $\Gamma$ to $G/G'$ is nondiscrete.
(This is automatically satisfied if $G$ is simple.)

\subsection{Geometric interpretation} \label{subsec:G/K}

Lattices of~$G$ can be characterised by their action on the \emph{Riemannian symmetric space} of~$G$.
Let us recall what this fundamental object is (see \eg \cite{ebe96,hel01} for details).

As mentioned in the introduction, $G$ admits a maximal compact subgroup~$K$.
It is unique up to conjugation, and so the quotient $G/K$ is uniquely defined.
For instance, if $G=\SL(n,\R)$, then $K=\SO(n)$ up to conjugation, and $G/K$ identifies with the space of ellipsoids of~$\R^n$ of volume~$1$; if $G=\SL(n,\C)$, then $K=\SU(n)$ up to conjugation.
See Table~\ref{table1} for further examples.

The group $K$ is the set of fixed points of some involution $\theta$ of~$G$, called a Cartan involution.
This yields a splitting of the Lie algebra $\mathfrak{g}$ of~$G$ as the direct sum of two linear subspaces, namely the subspace $\mathfrak{g}^{\mathrm{d}\theta}$ of fixed points of $\mathrm{d}\theta$ (which is the Lie algebra of~$K$) and the subspace $\mathfrak{g}^{-\mathrm{d}\theta}$ of anti-fixed points of $\mathrm{d}\theta$.
The tangent space $T_{eK}(G/K)$ to $G/K$ at the origin identifies with $\mathfrak{g}^{-\mathrm{d}\theta}$, on which there is a natural $K$-invariant positive definite symmetric bilinear form, the Killing form.
Pushing forward this bilinear form by elements of~$G$ yields a $G$-invariant Riemannian metric on $G/K$.
With this metric, $G/K$ has nonpositive sectional curvature and is a \emph{symmetric space}: at every point, the geodesic symmetry sending $\exp(tv)$ to $\exp(-tv)$ (where $v$ is a tangent vector) is an isometry.

Since $K$ is compact, any discrete subgroup $\Gamma$ of~$G$ acts properly discontinuously on $G/K$.
The subgroup $\Gamma$ is a lattice if and only if the quotient $\Gamma\backslash G/K$ has finite volume, which is equivalent to the action of $\Gamma$ on $G/K$ admitting a fundamental domain of finite volume.

\subsection{Examples} \label{subsec:examples}

The following fundamental example goes back to Minkowski.

\begin{ex} \label{ex:SLnZ}
The group $\Gamma = \SL(n,\Z)$ is a noncocompact lattice in $G = \SL(n,\R)$.
\end{ex}

Let us briefly explain how to see this, starting with the case $n=2$.

For $n=2$, the Riemannian symmetric space $G/K$ is the hyperbolic plane $\HH^2 \simeq \{ z=x+iy\in\C ~|~ y=\mathrm{Im}(z)>0\}$ with its $G$-invariant metric $\mathrm{d}s^2 = (\mathrm{d}x^2 + \mathrm{d}y^2)/y^2$, on which $G = \SL(2,\R)$ acts by M\"obius transformations: $\big(\begin{smallmatrix} \mathtt{a} & \mathtt{b}\\ \mathtt{c} & \mathtt{d}\end{smallmatrix}\big) \cdot z = \frac{\mathtt{a}z+\mathtt{b}}{\mathtt{c}z+\mathtt{d}}$.
It is an easy exercise to check that
$$\mathcal{D} := \Big\{ z\in\HH^2 ~\Big|~ |\mathrm{Re}(z)| \leq \frac{1}{2} \quad\mathrm{and}\quad |z|\geq 1\Big\}$$
(see Figure~\ref{fig:SL2Z}) is a finite-volume fundamental domain for the action of $\Gamma=\SL(2,\Z)$ on~$\HH^2$.
(Use that $\Gamma$ is generated by $\big(\begin{smallmatrix} 1 & 1\\ 0 & 1\end{smallmatrix}\big)$ and $\big(\begin{smallmatrix} 0 & 1\\ -1 & 0\end{smallmatrix}\big)$ and that the $G$-invariant volume form on~$\HH^2$ is given by $\mathrm{d}\mathrm{vol} = \mathrm{d}x \, \mathrm{d}y/(4y^2)$.)
Therefore $\Gamma$ is a lattice in $G = \SL(2,\R)$.
This lattice is not cocompact since for any $\gamma = \big(\begin{smallmatrix} \mathtt{a} & \mathtt{b}\\ \mathtt{c} & \mathtt{d}\end{smallmatrix}\big) \in \Gamma$ we have $\mathrm{Im}(\gamma\cdot i) = 1/(\mathtt{c}^2+\mathtt{d}^2) \leq 1$, hence there exist points of~$\HH^2$ (\eg $ti$ with $t>0$ large) that are arbitrarily far away from any point of the $\Gamma$-orbit of $i$ in~$\HH^2$.

For general $n\geq 2$, we can use the classical \emph{Iwasawa decomposition} $G = NAK$, where  $N$ (\resp $A$) is the subgroup of $G = \SL(n,\R)$ consisting of upper triangular unipotent (\resp positive diagonal) matrices and $K = \SO(n)$.
This means that any element $g\in G$ can be written in a unique way as $g = nak$ where $n\in N$, $a\in A$, and $k\in K$.
A finite-volume fundamental domain for the action of $\Gamma$ on $G/K$ is given by the \emph{Siegel set} $\mathcal{S}$ consisting of those elements of $G/K$ of the form $naK$ with $n\in N$ having all entries above the diagonal in $[-1/2,1/2]$ and $a = \mathrm{diag}(a_1,\dots,a_n) \in A$ satisfying $|a_i/a_{i+1}|\geq\sqrt{3}/2$ for all $1\leq i\leq n-1$.

\begin{figure}[ht!]
\begin{overpic}[scale=1,percent]{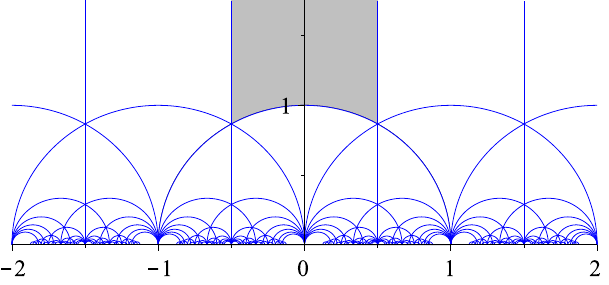}
\put(55,38){$\mathcal{D}$}
\end{overpic}
\caption{Fundamental domains for the action of $\SL(2,\Z)$ on the upper half plane model of~$\HH^2$}
\label{fig:SL2Z}
\end{figure}

Generalising Example~\ref{ex:SLnZ}, a fundamental result of Borel and Harish-Chandra \cite{bhc62} states that if $\mathbf{G}$ is a semisimple $\Q$-algebraic group, then $\mathbf{G}_{\Z}$ is a lattice in $\mathbf{G}_{\R}$.
Godement's cocompactness criterion (see \eg \cite[\S\,2.8]{ben09}) states that this lattice is cocompact if and only if it does not contain any nontrivial unipotent elements.

We now give a concrete example of a cocompact lattice (see \cite[\S\,2]{ben09}).

\begin{ex} \label{ex:unif-lattice}
For $p,q\geq 1$ with $p+q=n\geq 3$, consider the block diagonal matrix
$$J_{p,q} := \begin{pmatrix} \mathrm{I}_p & 0 \\ 0 & -\sqrt{2}\,\mathrm{I}_q\end{pmatrix}$$
and the Lie group $G := \SO(J_{p,q},\R) \simeq \SO(p,q)$.
Then $\Gamma :=\linebreak G \cap \SL(n,\Z[\sqrt{2}])$ is a cocompact lattice in~$G$.
\end{ex}

In order to see this, we can apply Weil's trick of ``restriction of scalars''.
Namely, consider the automorphism $\sigma$ of $\SL(n,\Z[\sqrt{2}])$ obtained by applying the Galois conjugation $x+\sqrt{2}y \mapsto x-\sqrt{2}y$ of $\Q[\sqrt{2}]$ to each entry.
Let $J_{p,q}^{\sigma}$ be the image of $J_{p,q}$ under~$\sigma$, and $\mathbf{H}$ the semisimple algebraic subgroup of $\mathbf{GL}_{2n}$ whose set $\mathbf{H}_{\C}$ of complex points consists of those block matrices of the form
$$h := \begin{pmatrix} a & 2b\\ b & a\end{pmatrix} \in \GL(2n,\C)$$
with $\varphi_+(h) := a + \sqrt{2}b \in \SO(J_{p,q},\C)$ and $\varphi_-(h) := a - \sqrt{2}b \in \SO(J_{p,q}^{\sigma},\C)$.
An elementary computation (or more abstractly the fact that the family of polynomial equations defining~$\mathbf{H}$ is invariant under~$\sigma$) shows that $\mathbf{H}$ is a $\Q$-algebraic group.
We have isomorphisms
$$\left\{\hspace{-0.2cm}\begin{array}{l}
\mathbf{H}_{\R} \overset{(\varphi_+,\varphi_-)}{\simeq} \SO(J_{p,q},\R) \times \SO(J_{p,q}^{\sigma},\R) = G \times \SO(J_{p,q}^{\sigma},\R),\\
\mathbf{H}_{\Z} \overset{\varphi_+}{\simeq} \Gamma,
\end{array}\right.$$
where $\SO(J_{p,q}^{\sigma},\R) \simeq \SO(n)$ is compact.
The group $\mathbf{H}_{\Z}$ is a lattice in $\mathbf{H}_{\R}$, hence $\Gamma$ is a lattice in~$G$.
Moreover, $\mathbf{H}_{\Z}$ does not contain any nontrivial unipotent elements since $\varphi_-$ takes $\Gamma$ to a subgroup of a \emph{compact} group, hence without nontrivial unipotent elements, and a homomorphism of algebraic groups takes unipotent elements to unipotent elements.
Godement's criterion then ensures that $\mathbf{H}_{\Z}\backslash\mathbf{H}_{\R}$ is compact, and so $\Gamma\backslash G$ is compact too.

\medskip

In both Examples \ref{ex:SLnZ} and~\ref{ex:unif-lattice}, the group $\Gamma$ is \emph{arithmetic} in~$G$, \ie there is a homomorphism $\pi : \mathbf{H}\to\mathbf{G}$ of semisimple $\Q$-algebraic groups such that $G = \mathbf{G}_{\R}$, such that the kernel of $\pi$ in $\mathbf{H}_{\R}$ is compact, and such that $\Gamma$ is commensurable to $\pi(\mathbf{H}_{\Z})$ (see \cite{wit15}).

Nonarithmetic lattices are known to exist in $G=\SO(n,1)$ for any $n\geq 2$: examples were constructed by Vinberg \cite{vin68} for small~$n$ using reflection groups, then by Gromov and Piatetski-Shapiro \cite{gps88} for any~$n$.
Later, different examples were constructed by Agol \cite{ago06} and Belolipetsky--Thomson \cite{bt10} (see also the very recent work \cite{dou23}) in the form of lattices of $\SO(n,1)$ whose systole (\ie length of the shortest closed geodesic) is arbitrarily small.
(Due to a separability property later established in \cite[Cor.\,1.12]{bhw11}, Agol's construction \cite{ago06} actually works for any~$n$.)
Finitely many commensurability classes of nonarithmetic lattices are also known in $\SU(2,1)$ and $\SU(3,1)$ by Deligne--Mostow \cite{dm86,mos80} and Deraux--Parker--Paupert \cite{der20,dpp16}.
It is an open question whether nonarithmetic lattices exist in $\SU(n,1)$ for $n>3$.

On the other hand, in noncompact simple Lie groups which are not locally isomorphic to $\SO(n,1)$ or $\SU(n,1)$, all lattices are arithmetic (as a consequence of superrigidity, see Section~\ref{subsec:lattice-deform}).

\subsection{Rank one versus higher rank} \label{subsec:rank}

The \emph{real rank} of a semisimple Lie group is an integer defined as follows.

\begin{defn}
The \emph{real rank} of~$G$, denoted $\Rrank(G)$, is the maximum dimension of a closed connected subgroup of~$G$ which is diagonalisable over~$\R$; equivalently, for noncompact~$G$, it is the maximum dimension of a totally geodesic subspace of the Riemannian symmetric space $G/K$ which is flat (\ie of constant zero sectional curvature).
\end{defn}

The real rank is invariant under local isomorphism, and the real rank of a product is the sum of the real ranks of the factors.
We refer to Table~\ref{table1} for the real ranks of the classical noncompact simple Lie groups.
A compact Lie group has real rank~$0$.

The simple Lie groups of real rank~$1$ are, up to local isomorphism, $\SO(n,1)$, $\SU(n,1)$, $\Sp(n,1)$ for $n\geq 2$, and the exceptional group $F_{4(-20)}$.
(Note that $\PSL(2,\R)\simeq\SO(2,1)_0$ and $\PSL(2,\C)\simeq\SO(3,1)_0$, where the subscript $0$ denotes the identity components.)

Semisimple Lie groups $G$ of real rank~$1$ are characterised by the fact that the sectional curvature of the corresponding Riemannian symmetric space $G/K$ is everywhere $<0$.
(In fact, the curvature is then \emph{pinched}, \ie contained in an interval of the form $[\alpha,\beta]$ where $\alpha\leq\beta <0$.)
This implies that the geodesic metric space $G/K$ is \emph{Gromov hyperbolic}, meaning that there exists $\delta\geq 0$ such that all geodesic triangles $(a,b,c)$ of $G/K$ are $\delta$-thin: the side $[a,b]$ lies in the uniform $\delta$-neighbourhood of the union $[b,c]\cup [c,a]$ of the other two sides (see Figure~\ref{fig:thin-triangle}).
On the other hand, when $r := \Rrank(G) \geq 2$, the Riemannian symmetric space $G/K$ is only nonpositively curved, and not Gromov hyperbolic; its geometry is somewhat more complicated due to the presence of \emph{flats} (\ie isometric copies of Euclidean~$\R^r$, where the curvature vanishes).

\begin{figure}[ht!]
\begin{overpic}[scale=0.35,percent]{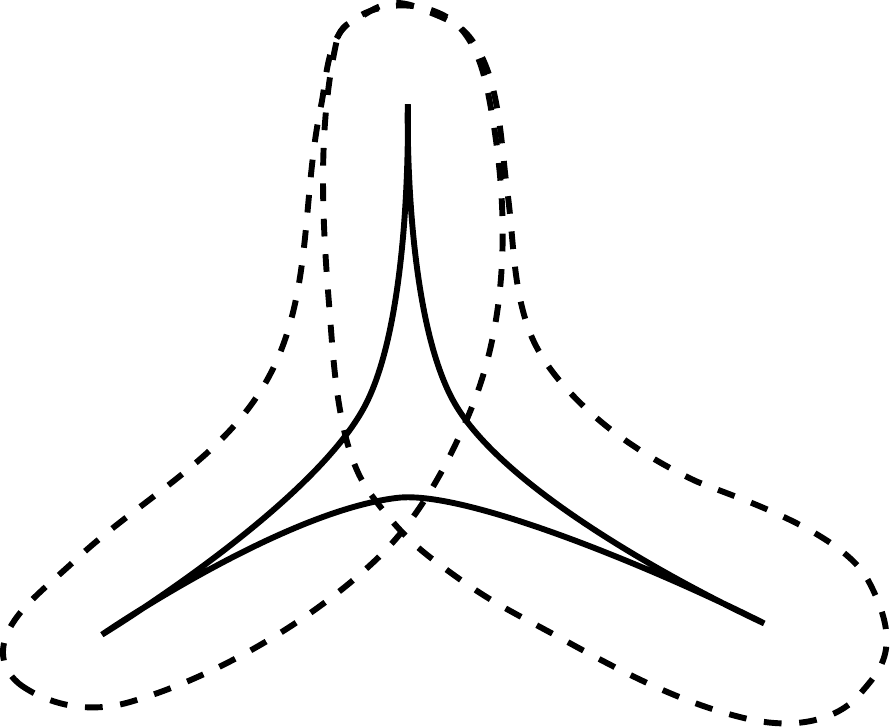}
\put(7,7){$a$}
\put(88,9){$b$}
\put(45,73){$c$}
\end{overpic}
\caption{A $\delta$-thin triangle in a geodesic metric space. The side $[a,b]$ is contained in the union of the uniform $\delta$-neighbourhoods (indicated by dashes) of the sides $[b,c]$ and $[c,a]$.}
\label{fig:thin-triangle}
\end{figure}

There are a number of differences between lattices in real rank one and lattices in higher real rank.

One difference concerns hyperbolicity.
Namely, if $\Rrank(G)=1$, then
\begin{itemize}
  \item any cocompact lattice $\Gamma$ of~$G$ is \emph{Gromov hyperbolic}, \ie $\Gamma$ acts properly discontinuously, by isometries, with compact quotient, on a Gromov hyperbolic proper geodesic metric space~$X$ (\eg $X=G/K$);
  \item any noncocompact lattice $\Gamma$ of~$G$ is \emph{relatively hyperbolic} with respect to some collection $\mathcal{P}$ of subgroups which are virtually (\ie up to finite index) nilpotent: this means that $\Gamma$ acts properly discontinuously by isometries on some visual Gromov hyperbolic proper metric space $X$ (\eg $X=G/K$), and with compact quotient on some closed subset of~$X$ of the form $X \smallsetminus \bigcup_{P\in\mathcal{P}} B_P$ where each $B_P$ is a $P$-invariant open horoball of~$X$ and $B_P \cap B_{P'} = \emptyset$ for $P\neq P'$ (see \cite[\S\,4]{hh20} for details).
\end{itemize}
On the other hand, if $\Rrank(G)\geq\nolinebreak 2$, then lattices of~$G$ are never Gromov hyperbolic, nor relatively hyperbolic with respect to any collection of subgroups \cite{bdm09}.
This follows from the fact that these groups are \emph{metrically thick} in the sense of \cite{bdm09} (see \cite{kl97} for cocompact lattices).
In fact, if $\Rrank(G)\geq 2$, then any isometric action of a lattice $\Gamma$ of~$G$ on a Gromov hyperbolic metric space $X$ is ``trivial'' (\ie admits a global fixed point in $X$ or its boundary), unless it is obtained by projecting $\Gamma$ to a rank-one factor of~$G$ \cite{bcfs22,hae20}.

More generally, lattices $\Gamma$ in simple Lie groups $G$ with $\Rrank(G)\geq\nolinebreak 2$ tend to have global fixed points when they act on various classes of spaces.
For instance, any continuous action by affine isometries of $\Gamma$ on a Hilbert space has a global fixed point.
This property, which is equivalent to Kazhdan's property~(T) (see \cite[Ch.\,13]{wit15}), is also satisfied by lattices in the rank-one Lie groups $\Sp(n,1)$ with $n\geq 2$ or $F_{4(-20)}$.
However, other fixed point properties actually distinguish higher rank from rank one.
For instance, for any simple Lie group $G$ with $\Rrank(G)\geq 2$ and any $\sigma$-finite positive measure $\nu$ on a standard Borel space, any continuous affine isometric action of a lattice of $G$ on $L^p(\nu)$ for $1<p<+\infty$ has a global fixed point, by Bader--Furman--Gelander--Monod; on the other hand, by Pansu~and Bourdon--Pajot, any cocompact lattice $\Gamma$ in a simple Lie group $G$ with $\Rrank(G)=\nolinebreak 1$ (and more generally, any Gromov hyperbolic group $\Gamma$) admits fixed-point-free affine isometric actions on $L^p(\Gamma)$ whose linear part is the regular representation, for any $p>1$ large enough.
See \cite{bfgm07}.

Another difference between real rank one and higher real rank concerns normal subgroups.
Namely, if $\Rrank(G)=1$, then lattices of~$G$ have many normal subgroups (see Gromov \cite{gro87}); in fact, if $\Gamma$ is a lattice of~$G$, then any countable group can be embedded into a quotient of $\Gamma$ by some normal subgroup (this ``universality'' property holds for all relatively hyperbolic groups \cite{amo07}).
On the other hand, if $\Rrank(G)\geq 2$, then all normal subgroups of an irreducible lattice $\Gamma$ of~$G$ are finite or finite-index in~$\Gamma$ (this is Margulis's Normal Subgroups Theorem, see~\cite{mar91}).

Note that for an irreducible lattice $\Gamma$ of~$G$, the finite normal subgroups of~$\Gamma$ are easy to describe: for connected~$G$, they are the subgroups of the finite abelian group $\Gamma\cap Z(G)$ (using Borel's Density Theorem \cite{bor60}).
On the other hand, much more effort is required to understand the finite-index normal subgroups of~$\Gamma$.
By \cite{bms67}, for $\Gamma=\SL(n,\Z)$ with $n\geq 3$, any finite-index normal subgroup of~$\Gamma$ is a \emph{congruence subgroup}, \ie contains the kernel of the natural projection $\SL(n,\Z)\to\SL(n,\Z/m\Z)$ for some $m\geq 1$; this is false for $\Gamma = \SL(2,\Z)$.
In general, it is conjectured that lattices of~$G$ have a slightly weaker form of this ``Congruence Subgroup Property'' if and only if $\Rrank(G)\geq 2$: see \cite{sur03}.

We now discuss some rigidity results for representations of lattices inside~$G$, which hold in particular for $\Rrank(G)\geq 2$.

\subsection{Deformations and rigidity} \label{subsec:lattice-deform}

Let $\Gamma$ be a discrete subgroup of~$G$.
We denote by $\Hom(\Gamma,G)$ the space of representations of $\Gamma$ to~$G$, endowed with the compact-open topology (if $\Gamma$ admits a finite generating subset~$F$, then this coincides with the topology of pointwise convergence on~$F$).

By a \emph{continuous deformation} of $\Gamma$ in~$G$ we mean a continuous path $(\rho_t)_{t\in [0,1)}$ in $\Hom(\Gamma,G)$ where $\rho_0$ is the natural inclusion of $\Gamma$ in~$G$.
Certain continuous deformations of $\Gamma$ in~$G$ are considered \emph{trivial}: namely, those of the form $\rho_t = g_t\,\rho_0(\cdot)\,g_t^{-1}$ where $(g_t)_{t\in [0,1)}$ is a continuous path in~$G$ (and $g_0$ is the identity element).
In other words, if $\Hom(\Gamma,G)/G$ denotes the quotient of $\Hom(\Gamma,G)$ by the natural action of $G$ by conjugation at the target, then the trivial deformations are those whose image in $\Hom(\Gamma,G)/G$ are constant.

For $G = \PSL(2,\R) \simeq \SO(2,1)_0$, torsion-free lattices $\Gamma$ of~$G$ admit many nontrivial continuous deformations.
Indeed, if $\Gamma$ is noncocompact in~$G$, then $\Gamma$ is a nonabelian free group on finitely many generators $\gamma_1,\dots,\gamma_m$, and the natural inclusion $\rho_0 : \Gamma\hookrightarrow G$ can be continuously deformed by deforming independently the image of each~$\gamma_i$; the map $\rho \mapsto (\rho(\gamma_1),\dots,\rho(\gamma_m))$ yields an isomorphism $\Hom(\Gamma,G) \simeq G^m$.
If $\Gamma$ is cocompact in~$G$, then $\Gamma$ identifies with the fundamental group of the closed hyperbolic surface $S := \Gamma\backslash\HH^2$; the connected component of the natural inclusion $\rho_0$ in $\Hom(\Gamma,G)$ consists entirely of injective and discrete representations \cite{gol-PhD}, and its image in $\Hom(\Gamma,G)/G$ is homeomorphic to $\R^{6g-6}$: it is the \emph{Teichm\"uller space} of~$S$.

On the other hand, a number of rigidity results have been proved for lattices in other noncompact semisimple Lie groups~$G$, including local rigidity, Mostow rigidity, and Margulis superrigidity, which we now briefly state and comment on.
See \cite{fis-survey-margulis,pan-bourbaki} for details and references.

\begin{LocalRigid}[Selberg, Calabi, Weil, Garland--Raghunathan]
Let $G$ be a semisimple Lie group with no simple factors that are compact or locally isomorphic to $\PSL(2,\R)$ (\resp $\PSL(2,\mathbb{K})$ with $\mathbb{K}=\R$ or~$\C$).
If $\Gamma$ is a cocompact (\resp noncocompact) irreducible lattice of~$G$, then any continuous deformation of $\Gamma$ in~$G$ is trivial.
\end{LocalRigid}

Note that noncocompact lattices of $G=\PSL(2,\C)$ are not locally rigid: they can be deformed using Thurston's \emph{hyperbolic Dehn surgery} theory.
However, they do not admit nontrivial deformations sending unipotent elements to unipotent elements.

Local rigidity is an important ingredient in the proof of Wang's finiteness theorem, which states that if $G$ is simple and not locally isomorphic to $\PSL(2,\mathbb{K})$ with $\mathbb{K}=\R$ or~$\C$, then for any $v>0$ there are only finitely many conjugacy classes of lattices of~$G$ with covolume $\leq v$.

\begin{MostowRigid}[Mostow, Prasad, Margulis]
Let $G,G'$ be connected semisimple Lie groups, with trivial centre, and with no simple factors that are compact or locally isomorphic to $\PSL(2,\R)$.
If $\Gamma$ and~$\Gamma'$ are irreducible lattices of $G$ and~$G'$, respectively, then any isomorphism between $\Gamma$ and~$\Gamma'$ extends to a continuous isomorphism between $G$ and~$G'$.
\end{MostowRigid}

This implies (see Section~\ref{subsec:G/K}) that the fundamental group of any locally symmetric space $\Gamma\backslash G/K$ completely determines its geometry.

\begin{MargulisRigid}[{Margulis, Corlette, Gromov--Schoen, see \eg \cite[Th.\,16.1.4]{wit15}}]
Let $G$ be a noncompact semisimple Lie group which is connected, algebraically simply connected, and not locally isomorphic to the product of $\SO(n,1)$ or $\SU(n,1)$ with a compact Lie group.
Then any irreducible lattice $\Gamma$ of~$G$ is \emph{superrigid}, in the sense that any representation $\rho : \Gamma\to\GL(d,\R)$ (for any $d\geq 2$) continuously extends to~$G$ up to finite index and to bounded error.
\end{MargulisRigid}

Here ``$\rho$ continuously extends to~$G$ up to finite index and to bounded error'' means that there exist a finite-index subgroup $\Gamma'$ of~$\Gamma$, a continuous homomorphism $\rho_G : G\to\GL(d,\R)$, and a compact subgroup $C$ of $\GL(d,\R)$ centralising $\rho_G(G)$ such that $\rho(\gamma) \in \rho_G(\gamma) C$ for all $\gamma\in\Gamma'$.
Under an appropriate assumption on the image of~$\rho$, we can take $C$ to be trivial.
``Algebraically simply connected'' is a technical assumption which is always satisfied up to passing to a finite cover: see \cite[\S\,16.1]{wit15}.

Margulis used his superrigidity (over $\R$ as above, but also over non-Archimedean local fields) to prove that if $G$ is semisimple with no compact simple factors and if $\Rrank(G)\geq 2$, then all irreducible lattices $\Gamma$ of~$G$ are arithmetic in the sense of Section~\ref{subsec:examples}.
The same conclusion holds when $G$ is locally isomorphic to $\Sp(n,1)$ with $n\geq 2$ or $F_{4(-20)}$, as superrigidity holds for these rank-one groups as well.

Margulis superrigidity was further extended by Zimmer into a rigidity result for cocycles, see \cite{fm03}.
This was the starting point of important new directions of research at the intersection of group theory and dynamics (see \eg \cite{fur11}), including the so-called \emph{Zimmer program} (see \cite{can-bourbaki,fis-icm}).
The idea of this program is the following: for a lattice $\Gamma$ in a simple Lie group~$G$ with $\Rrank(G)\geq 2$, Margulis superrigidity states that any linear representation of~$\Gamma$ essentially comes from a linear representation of~$G$; in particular, the minimal dimension of a finite-kernel linear representation of~$\Gamma$ is equal to the minimal dimension of a finite-kernel linear representation of~$G$.
Zimmer asked whether this last property has a nonlinear analogue, for actions by diffeomorphisms of $\Gamma$ on closed manifolds: namely, is the minimal dimension of a closed manifold on which $\Gamma$ acts faithfully by diffeomorphisms equal to the minimal dimension of a closed manifold on which $G$ (or a compact form of the complexification of~$G$) acts faithfully by diffeomorphisms?
Brown, Fisher, and Hurtado have recently answered this question positively in many cases, building on new developments in dynamics and on recent strengthenings of Kazhdan's property~(T): see \cite{can-bourbaki,fis-icm}.
This has led to intense research activity around rigidity questions for actions by diffeomorphisms of higher-rank lattices on manifolds.

\section{A change of paradigm} \label{sec:flex-examples}

We just saw that many important rigidity results have been established for lattices since the 1960s, particularly in higher real rank, and that this topic is still very active.
On the other hand, since the 1990s and early 2000s, there has been growing interest in \emph{flexibility}: namely, there has been increasing effort to find and study infinite discrete subgroups of semisimple Lie groups which are more flexible than lattices, and which in certain cases can have large deformation spaces.
Such discrete subgroups have been known to exist for a long time in real rank one, whereas the investigation of their analogues in higher real rank has gathered momentum only much more recently.
We present a few examples below.

To be more precise, we are interested in infinite discrete subgroups $\Gamma$ of semisimple Lie groups~$G$ that admit continuous deformations $(\rho_t)_{t\in [0,1)}\linebreak\subset\Hom(\Gamma,G)$ as in Section~\ref{subsec:lattice-deform} which, not only are nontrivial, but also satisfy that each $\rho_t$ is injective with discrete image, so that the $\rho_t(\Gamma)$ for $t>0$ are still discrete subgroups of~$G$ isomorphic (but not conjugate) to~$\Gamma$.
An ideal situation is when the natural inclusion $\rho_0 : \Gamma\hookrightarrow G$ admits a full open neighbourhood in $\Hom(\Gamma,G)$ consisting entirely of injective and discrete representations, with a nonconstant image in $\Hom(\Gamma,G)/G$.

We are thus led, for given discrete subgroups $\Gamma$ of~$G$, to study subsets of $\Hom(\Gamma,G)$ consisting of injective and discrete representations, and their images in the corresponding character varieties.
In this framework, we discuss so-called \emph{higher Teichm\"uller theory} in Section~\ref{subsec:higher-Teich} below.

\begin{rem} \label{rem:two-points-of-view}
In the sequel, we go back and forth between two equivalent points of view: studying discrete subgroups $\Gamma$ of~$G$, or fixing an abstract group~$\Gamma_0$ and studying the injective and discrete representations of $\Gamma_0$ into~$G$ (corresponding to the various ways of realising $\Gamma_0$ as a discrete subgroup of~$G$).
We sometimes allow ourselves to weaken ``injective'' into ``finite-kernel''.
\end{rem}

\subsection{Examples in real rank one} \label{subsec:ex-rank-1}

Examples of flexible discrete subgroups in real rank one include classical Schottky groups (which are nonabelian free groups), quasi-Fuchsian groups (which are closed surface groups), as well as other discrete subgroups which are fundamental groups of higher-dimensional manifolds.
We briefly review such examples, referring to \cite{kap07,mas88} for more details.

\subsubsection{Schottky groups}

For $n\geq 2$, let $X = \HH^n$ be the real hyperbolic space of dimension~$n$, with visual boundary $\di X \simeq \mathbb{S}^{n-1}$.
Concretely, choosing a symmetric bilinear form $\langle\cdot,\cdot\rangle_{n,1}$ of signature $(n,1)$ on~$\R^{n+1}$, we can realise $X$ as the open subset
\begin{equation} \label{eqn:proj-model-Hn}
\HH^n = \{ [v]\in\PP(\R^{n+1}) ~|~ \langle v,v\rangle_{n,1}<0\}
\end{equation}
of the real projective space $\PP(\R^{n+1})$ and $\di X$ as the boundary of $X$ in $\PP(\R^{n+1})$.
The geodesics of~$X$ are then the nonempty intersections of $X$ with projective lines of $\PP(\R^{n+1})$, the geodesic copies of $\HH^{n-1}$ in~$X$ are the nonempty intersections of $X$ with projective hyperplanes of $\PP(\R^{n+1})$, and the isometry group $G = \mathrm{Isom}(X)$ of~$X$ is $\PO(n,1) = \OO(n,1)/\{\pm\mathrm{I}\}$.

An \emph{open disk} in $\di X$ is the boundary at infinity of an open half-space of~$X$, bounded by a geodesic copy of~$\HH^{n-1}$.
(For $n=2$, open disks are just open intervals in $\di X \simeq \mathbb{S}^1$.)

For $m\geq 2$, choose $2m$ pairwise disjoint open disks $B_1^{\pm}, \dots, B_m^{\pm}$ in $\di X$, such that $\di X \smallsetminus \bigcup_{i=1}^m (B_i^- \cup B_i^+)$ has nonempty interior, and elements $\gamma_1,\dots,\gamma_m\in G$ such that $\gamma_i\cdot\mathrm{Int}(\di X\smallsetminus B_i^-) = B_i^+$ for all~$i$.
Let $\Gamma$ be the subgroup of~$G$ generated by $\gamma_1,\dots,\gamma_m$.

\begin{claim} \label{claim:ping-pong-rank-one}
The group $\Gamma$ is a nonabelian free group with free generating subset $\{\gamma_1,\dots,\gamma_m\}$.
It is discrete in~$G$.
\end{claim}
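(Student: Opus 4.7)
The proof is a standard application of Klein's ping-pong lemma, using the action of $G = \PO(n,1)$ on the boundary at infinity $\di X \simeq \mathbb{S}^{n-1}$. The plan is to identify a basepoint whose $\Gamma$-orbit is controlled by the ``forbidden disk'' dynamics of each $\gamma_i^{\pm 1}$, and then read off both freeness and discreteness from that orbit behaviour.

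\textbf{Setup.} Choose a point $\xi_0$ in the nonempty open set $U := \Int(\di X \smallsetminus \bigcup_{i=1}^m (B_i^- \cup B_i^+))$, and fix an open neighbourhood $V\subset U$ of $\xi_0$ disjoint from every $B_i^\pm$. The hypothesis $\gamma_i \cdot \Int(\di X \smallsetminus B_i^-) = B_i^+$ says that $\gamma_i$ maps everything in the interior of the complement of $B_i^-$ into $B_i^+$. Since $\gamma_i$ is a homeomorphism of $\di X$, taking the inverse of this equality and passing to interiors yields the symmetric statement $\gamma_i^{-1} \cdot \Int(\di X \smallsetminus B_i^+) \subset B_i^-$. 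In short, for each generator $\gamma_j^{\epsilon}$ (with $j\in\{1,\dots,m\}$ and $\epsilon\in\{\pm\}$), any point admitting an open neighbourhood disjoint from $B_j^{-\epsilon}$ is mapped by $\gamma_j^{\epsilon}$ into $B_j^{\epsilon}$.

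\textbf{Ping-pong for freeness.} Let $w = \gamma_{i_k}^{\epsilon_k}\cdots\gamma_{i_1}^{\epsilon_1}$ be a reduced word of length $k\geq 1$ in the generators, i.e.\ $(i_{l+1},-\epsilon_{l+1}) \neq (i_l,\epsilon_l)$ for all $l$. I claim by induction that, setting $w_l := \gamma_{i_l}^{\epsilon_l}\cdots\gamma_{i_1}^{\epsilon_1}$, one has $w_l(\xi_0) \in B_{i_l}^{\epsilon_l}$. For $l=1$ this is immediate, as $\xi_0 \in V \subset \Int(\di X \smallsetminus B_{i_1}^{-\epsilon_1})$. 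For the inductive step, $w_l(\xi_0)$ lies in the open set $B_{i_l}^{\epsilon_l}$, which by the disjointness hypothesis and the fact that $B_{i_{l+1}}^{-\epsilon_{l+1}} \neq B_{i_l}^{\epsilon_l}$ (the word is reduced) is disjoint from $B_{i_{l+1}}^{-\epsilon_{l+1}}$; hence $w_l(\xi_0)$ admits an open neighbourhood avoiding $B_{i_{l+1}}^{-\epsilon_{l+1}}$, and the previous paragraph gives $w_{l+1}(\xi_0)\in B_{i_{l+1}}^{\epsilon_{l+1}}$. In particular $w(\xi_0)\in B_{i_k}^{\epsilon_k}$, which is disjoint from $V\ni\xi_0$, so $w\neq \mathrm{id}$. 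This shows that no nontrivial reduced word in $\gamma_1,\dots,\gamma_m$ equals the identity, so $\Gamma$ is freely generated by the~$\gamma_i$.

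\textbf{Discreteness.} By translation it suffices to show that $e\in G$ has a neighbourhood meeting $\Gamma$ only at $e$. Suppose not: there exists a sequence $\delta_n \in \Gamma\smallsetminus\{e\}$ with $\delta_n\to e$ in $G$. Since the $G$-action on $\di X$ is continuous, $\delta_n(\xi_0)\to\xi_0$, so $\delta_n(\xi_0)\in V$ for all $n$ large enough. But by the ping-pong claim above (applied to the reduced word expressing each $\delta_n$), $\delta_n(\xi_0)$ lies in one of the open disks $B_i^\pm$, which are all disjoint from $V$, a contradiction. Hence $\Gamma$ is discrete in $G$.

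The main delicate point is the induction step: one must check that $w_l(\xi_0)$ not only avoids $B_{i_{l+1}}^{-\epsilon_{l+1}}$, but sits in its \emph{interior complement}, so that the hypothesis on $\gamma_{i_{l+1}}^{\epsilon_{l+1}}$ applies. This is where both the reducedness of the word (ensuring $B_{i_l}^{\epsilon_l}\neq B_{i_{l+1}}^{-\epsilon_{l+1}}$) and the disjointness of the $2m$ open disks enter crucially: together they make $B_{i_l}^{\epsilon_l}$ itself an open neighbourhood of $w_l(\xi_0)$ avoiding the forbidden disk.
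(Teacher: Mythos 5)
Your proposal is correct and is essentially the paper's own ping-pong argument: the paper tracks the whole central region $\di X \smallsetminus \bigcup_i (B_i^-\cup B_i^+)$ and observes that the set of $g\in G$ pushing it into $\overline{\bigcup_i (B_i^-\cup B_i^+)}$ is closed and misses the identity, while you track a single basepoint $\xi_0$ and run a sequence argument — the same idea in equivalent packaging. Your care with the interior/closure bookkeeping in the induction step (including deriving the statement for $\gamma_i^{-1}$ from the hypothesis on $\gamma_i$) is exactly the point the paper handles with its symmetric hypothesis $\gamma_{i_j}^{\sigma_j}\cdot\Int(\di X\smallsetminus B_{i_j}^{-\mathrm{sign}(\sigma_j)}) = B_{i_j}^{\mathrm{sign}(\sigma_j)}$.
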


\begin{proof}
Consider any reduced word $\gamma = \gamma_{i_1}^{\sigma_1} \dots \gamma_{i_N}^{\sigma_N}$ in the alphabet $\{ \gamma_1^{\pm 1},\dots,\gamma_m^{\pm 1}\}$, where $1\leq i_j\leq m$ and $\sigma_j\in\{\pm 1\}$ for all $1\leq\nolinebreak j\leq\nolinebreak N$.
Since $\gamma_{i_j}^{\sigma_j}\cdot\mathrm{Int}(\di X\smallsetminus B_{i_j}^{-\mathrm{sign}(\sigma_j)}) = B_{i_j}^{\mathrm{sign}(\sigma_j)}$ for all~$j$ and since $B_{i_j}^{\mathrm{sign}(\sigma_j)} \subset \mathrm{Int}(\di X\smallsetminus B_{i_{j-1}}^{-\mathrm{sign}(\sigma_{j-1})})$ for $j\geq 2$, we see that the element of~$\Gamma$ corresponding to~$\gamma$ sends $\di X \smallsetminus \bigcup_{i=1}^m (B_i^- \cup B_i^+)$ into the closure of $B_{i_1}^{\mathrm{sign}(\sigma_1)}$ in $\di X$.
On the other hand, the set of elements $g\in G$ sending\linebreak $\di X \smallsetminus \bigcup_{i=1}^m (B_i^- \cup B_i^+)$ into the closure of $\bigcup_{i=1}^m (B_i^- \cup B_i^+)$ in $\di X$ is a closed subset of~$G$ that does not contain the identity element.
\end{proof}

Such a group $\Gamma$ is called a \emph{Schottky group}.
The proof of Claim~\ref{claim:ping-pong-rank-one} is based on the so-called \emph{ping pong dynamics} of $\Gamma$ on $\di X$: imagine the ping pong players are the generators $\gamma_1,\gamma_1^{-1},\dots,\gamma_m,\gamma_m^{-1}$; the ping pong table is $\di X$, which is divided into several open regions, namely the $B_i^{\pm}$ and the ``central region'' $\Int(\di X \smallsetminus \bigcup_i (B_i^- \cup B_i^+))$; the rules of the game are that each player $\gamma_i^{\pm 1}$ sends all regions but one (namely $B_i^{\mp}$) into a single region (namely $B_i^{\pm}$).
The ping pong ball is a point which is initially in the central region.
For any reduced word in the generators, we successively apply the corresponding ping pong players; the ball ends up in one of the $B_i^{\pm}$.
We deduce that the element of $\Gamma$ corresponding to this reduced word is nontrivial in~$\Gamma$, and not too close to the identity~in~$G$.

\begin{rem} \label{rem:DoD-rank-one}
Let $\mathcal{D} := \di X \smallsetminus \bigcup_{i=1}^m (B_i^- \cup B_i^+)$ and $\Omega :=\linebreak \mathrm{Int}(\bigcup_{\gamma\in\Gamma} \gamma\cdot\nolinebreak\mathcal{D})$.
Then $\Omega$ is an open subset of $\di X$ on which $\Gamma$ acts properly discontinuously with fundamental domain~$\mathcal{D}$.
\end{rem}

(Here we have assumed that $\mathcal{D}$ has nonempty interior; therefore $\Omega \neq \emptyset$ and $\Gamma$ is \emph{not} a lattice in~$G$: it has infinite covolume for the Haar measure.)
See \eg \cite{msw02} for beautiful illustrations in dimension two, for $X = \HH^3$.

Since Schottky groups $\Gamma$ are nonabelian free groups, they admit, as in Section~\ref{subsec:lattice-deform}, many nontrivial continuous deformations $(\rho_t)_{t\in [0,1)}\subset\Hom(\Gamma,G)$, obtained by independently deforming the image of each generator~$\gamma_i$.
Some of these deformations $(\rho_t)_{t\in [0,1)}$ are ``good'' in the sense that for every $t\in [0,1)$, the group $\rho_t(\Gamma)$ still has a ping pong configuration analogous to that of~$\Gamma$, hence $\rho_t$ is injective with discrete image by arguing as in Claim~\ref{claim:ping-pong-rank-one}.
If the open disks $B_1^{\pm},\dots,B_m^{\pm}$ in the initial configuration have pairwise disjoint \emph{closures} (\ie $\Gamma$ is a ``strong'' Schottky group), then all small deformations are ``good'': the natural inclusion $\rho_0 : \Gamma\hookrightarrow G$ admits an open neighbourhood in $\Hom(\Gamma,G)$ consisting entirely of injective and discrete representations, with a ping pong configuration analogous to that of~$\Gamma$.

\subsubsection{Quasi-Fuchsian groups}

Quasi-Fuchsian groups are important infinite discrete subgroups of\linebreak $\PSL(2,\C)$ which have been much studied (see \cite{mas88}), and which are \emph{not} lattices in $\PSL(2,\C)$.
They are by definition the images of quasi-Fuchsian representations.
Let us briefly recall what these are.

Let $S$ be a closed orientable surface of genus $g\geq 2$.
By the Uniformisation Theorem (see Section~\ref{sec:intro}), there exist injective and discrete representations from the fundamental group $\pi_1(S)$ to $\PSL(2,\R)$.
These representations form two connected components of $\Hom(\pi_1(S),\PSL(2,\R))$ \cite{gol-PhD}, switched by conjugation by elements of $\PGL(2,\R)\smallsetminus\PSL(2,\R)$ (\ie by orientation-reversing isometries of~$\HH^2$).
The image of either of these connected components in $\Hom(\pi_1(S),\PSL(2,\R))/\PSL(2,\R)$ identifies with the \emph{Teichm\"uller space} of~$S$, which is homeomorphic to $\R^{6g-6}$.

Now view $\PSL(2,\R)$ as a subgroup of $\PSL(2,\C)$.
Recall that $\PSL(2,\C)\linebreak\simeq\PO(3,1)_0$ acts by isometries on the hyperbolic space~$\HH^3$; the subgroup $\PSL(2,\R)\simeq\PO(2,1)_0$ preserves an isometric copy of $\HH^2$ inside~$\HH^3$.
We see the injective and discrete representations $\rho : \pi_1(S)\to\PSL(2,\R)$ as representations with values in $\PSL(2,\C)$, called \emph{Fuchsian}.
They preserve a circle in $\di\HH^3$, namely the boundary $\di\HH^2$ of the isometric copy of~$\HH^2$ preserved by $\PSL(2,\R)$.

The Fuchsian representations admit an open neighbourhood in\linebreak $\Hom(\pi_1(S),\PSL(2,\C))$ consisting entirely of injective and discrete representations, called \emph{quasi-Fuchsian}.
Each quasi-Fuchsian representation preserves a topological circle in $\di\HH^3$, but which may now be ``wiggly'' as in Figure~\ref{fig:QFLimSet}.
Quasi-Fuchsian representations form an open subset of $\Hom(\pi_1(S),\PSL(2,\C))$ which is dense in the set of injective and discrete representations; its image in $\Hom(\pi_1(S),\PSL(2,\C))/\PSL(2,\C)$ admits a natural parametrisation (due to Bers) by two copies of the Teichm\"uller space of~$S$ (hence by $\R^{12g-12}$).
See \eg \cite{ser05} for details and references.

\begin{figure}[ht!]
\includegraphics[scale=0.4]{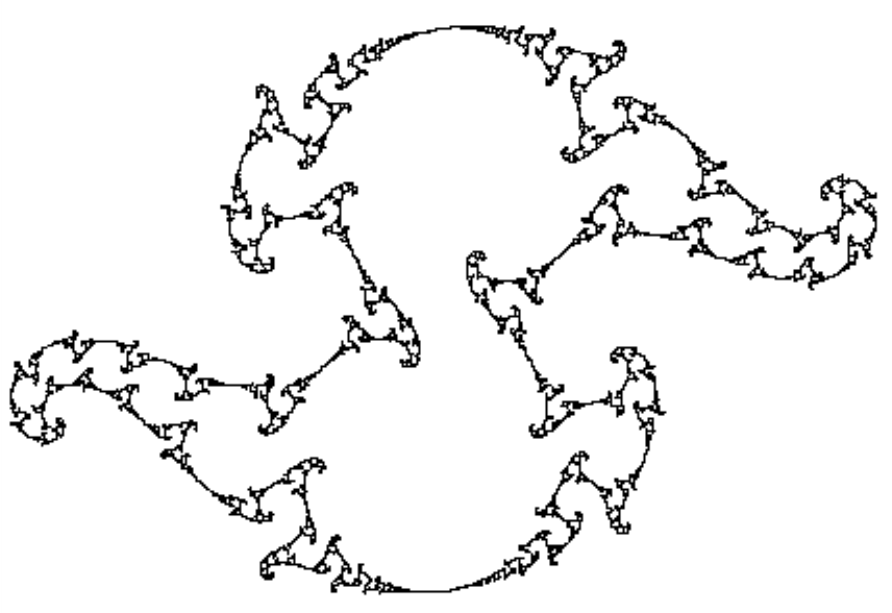}
\caption{The limit set (an invariant topological circle) of a quasi-Fuchsian group in $\di\HH^3 \simeq \C\cup\nolinebreak\{\infty\}$}
\label{fig:QFLimSet}
\end{figure}

\subsubsection{Deformations of Fuchsian representations for higher-dimensional groups} 

Recall that $\PSL(2,\R) \simeq \SO(2,1)_0$ and $\PSL(2,\C) \simeq \SO(3,1)_0$.
We now consider any integer $n\geq 2$ and let $\Gamma$ be a cocompact lattice of $\SO(n,1)_0$.
As above, we can see $\Gamma$ as a discrete subgroup of $\SO(n+1,1)$ (which is not a lattice anymore).
Interestingly, although all continuous deformations of $\Gamma$ in $\SO(n,1)$ are trivial for $n\geq 3$ (by Mostow rigidity, see Section~\ref{subsec:lattice-deform}), there can exist nontrivial continuous deformations of $\Gamma$ in $\SO(n+1,1)$.
Such deformations were constructed in \cite{jm87,kou85} based on a construction of Thurston called \emph{bending}.

The idea is the following.
The cocompact lattice $\Gamma$ of $\SO(n,1)_0$ defines a closed hyperbolic manifold $M = \Gamma\backslash\HH^n$ whose fundamental group $\pi_1(M)$ identifies with $\Gamma$.
Suppose that $M$ admits a closed totally geodesic embedded hypersurface~$N$.
Its fundamental group $\pi_1(N)$ is a subgroup of $\Gamma$ contained in a copy of $\SO(n-1,1)$ inside $\SO(n,1)$.
In particular, the centraliser of $\pi_1(N)$ in $\SO(n+1,1)$ contains a one-parameter subgroup $(g_t)_{t\in\R}$ which is not contained in $\SO(n,1)$.

If $N$ separates $M$ into two submanifolds $M_1$ and~$M_2$, then by van Kam\-pen's theorem $\pi_1(M)$ is the amalgamated free product\linebreak $\pi_1(M_1) *_{\pi_1(N)}\nolinebreak\pi_1(M_2)$ of $\pi_1(M_1)$ and $\pi_1(M_2)$ over $\pi_1(N)$.
Let $\rho_0 : \Gamma\to\SO(n+1,1)$ be the natural inclusion.
A continuous deformation $(\rho_t)_{t\in [0,1)} \subset \Hom(\Gamma,\SO(n+1,1))$ is obtained by defining $\rho_t$ to be $\rho_0$ when restricted to $\pi_1(M_1)$ and $g_t\rho_0(\cdot)g_t^{-1}$ when restricted to $\pi_1(M_2)$ (these two representations coincide on $\pi_1(N)$).

Otherwise, $M' := M\smallsetminus N$ is connected and $\pi_1(M)$ is an HNN extension of $\pi_1(M')$: it is generated by $\pi_1(M')$ and some element $\nu$ with the relations $\nu\,j_1(\gamma)\,\nu^{-1} = j_2(\gamma)$ for all $\gamma\in\pi_1(N)$, where $j_1 : \pi_1(N)\to\pi_1(M)$ and $j_2 : \pi_1(N)\to\pi_1(M)$ are the inclusions in~$\pi_1(M)$ of the fundamental groups of the two sides of~$N$.
Let $\rho_0 : \Gamma\to\SO(n+1,1)$ be the natural inclusion.
A continuous deformation $(\rho_t)_{t\in [0,1)} \subset \Hom(\Gamma,\SO(n+1,1))$ is obtained by defining $\rho_t$ to be $\rho_0$ when restricted to $\pi_1(M')$ and setting $\rho_t(\nu) := \nu g_t$ (the relations $\nu\,j_1(\gamma)\,\nu^{-1} = j_2(\gamma)$ for $\gamma\in\pi_1(N)$ are preserved since $g_t$ centralises $\pi_1(N)$).

In either case, Johnson and Millson \cite{jm87} observed that for small enough $t>0$ the representation $\rho_t$ has Zariski-dense image in $\SO(n+1,1)$; moreover, $\rho_t$ is still injective and discrete for small~$t$ (see Section~\ref{subsec:cc-open-rank-1}).

\begin{rems} \label{rem:bending}
\begin{enumerate}[(1)]
  \item\label{item:bending-1} In this construction, $\rho_t$ is not injective and discrete for all $t\in\R$.
Indeed, the one-parameter subgroup $(g_t)_{t\in\R}$ takes values in a copy of $\SO(2)$ in $\SO(n+1,1)$, which centralises $\rho_0(\pi_1(N))$.
For $t\in\R$ such that $g_t = -\mathrm{I}$ in $\SO(2)$, the representation $\rho_t$ takes values in $\SO(n,1)$ but is not injective and discrete.
  \item\label{item:bending-2} The fact that $\rho_t$ is injective and discrete for small~$t$ also follows from Maskit's combination theorems, which generalise the idea of ping pong to amalgamated free products and HNN extensions (see \cite[\S\,VIII.E.3]{mas88}).
\end{enumerate}
\end{rems}

\subsection{Ping pong in higher real rank} \label{subsec:ping-pong-higher-rank}

Examples of ``flexible'' discrete subgroups of higher-rank semisimple Lie groups~$G$ which are nonabelian free groups can be constructed by generalising the classical Schottky groups of Section~\ref{subsec:ex-rank-1} in various ways.
Let us mention three geometric constructions.

\subsubsection{Ping pong in projective space}

The idea of the following construction goes back to Tits \cite{tit72} in his proof of the Tits alternative.
The construction was later studied in a more quantitative way by Benoist \cite{ben97}.
It works in any flag variety $G/P$ where $G$ is a noncompact semisimple Lie group and $P$ a proper parabolic subgroup of~$G$, but for simplicity we consider the projective space $\PP(\R^d)$ which is a flag variety of $G = \SL(d,\R)$, for $d\geq 3$.
We fix a Riemannian metric $\mathtt{d}_{\PP(\R^d)}$ on $\PP(\R^d)$.

An element $g\in G$ is said to be \emph{biproximal} in $\PP(\R^d)$ if it admits a unique complex eigenvalue of highest modulus and a unique complex eigenvalue of lowest modulus, and if these two eigenvalues (which are then necessarily real) have multiplicity~$1$; equivalently, $g$ is conjugate to a block-diagonal matrix $\mathrm{diag}(t,A,s^{-1})$ where $t,s>1$ and $A\in\GL(d-2,\R)$ is such that the spectral radii of $A$ and~$A^{-1}$ are $<t$ and $<s$, respectively (for instance, $A$ could be the identity matrix).
In this case, $g$ has a unique attracting fixed point $x_g^+$ and a unique repelling fixed point $x_g^-$ in $\PP(\R^d)$, corresponding to the eigenspaces for the highest and lowest eigenvalues.
More precisely, $g$ has the following ``North-South dynamics'' on $\PP(\R^d)$:
\begin{itemize}
  \item it preserves a unique projective hyperplane $X_g^+$ (\resp $X_g^-$) of $\PP(\R^d)$ containing $x_g^+$ (\resp $x_g^-$), corresponding to the sum of the generalised eigenspaces for the eigenvalues of nonminimal (\resp nonmaximal) modulus,
  \item for any $x \in \PP(\R^d)\smallsetminus X_g^{\mp}$ we have $g^{\pm k}\cdot x\to x_g^{\pm}$ as $k\to +\infty$, uniformly on compact sets.
\end{itemize}
In particular, if we fix $\varepsilon>0$, then any large power of~$g$ sends the complement of the open uniform $\varepsilon$-neighbourhood $B_{g^{-1}}^{\varepsilon}$  of $X_g^-$ into the closure of the open ball $b_g^{\varepsilon}$ of radius $\varepsilon$ centred at~$x_g^+$ for $\mathtt{d}_{\PP(\R^d)}$, and $g^{-1}$ has a similar behaviour after replacing $(B_{g^{-1}}^{\varepsilon},X_g^-,b_g^{\varepsilon},x_g^+)$ by $(B_g^{\varepsilon},X_g^+,b_{g^{-1}}^{\varepsilon},x_g^-)$ (see Figure~\ref{fig:ping-pong-proj}).
\begin{figure}
\vspace{0.5cm}
\begin{overpic}[scale=0.4,percent]{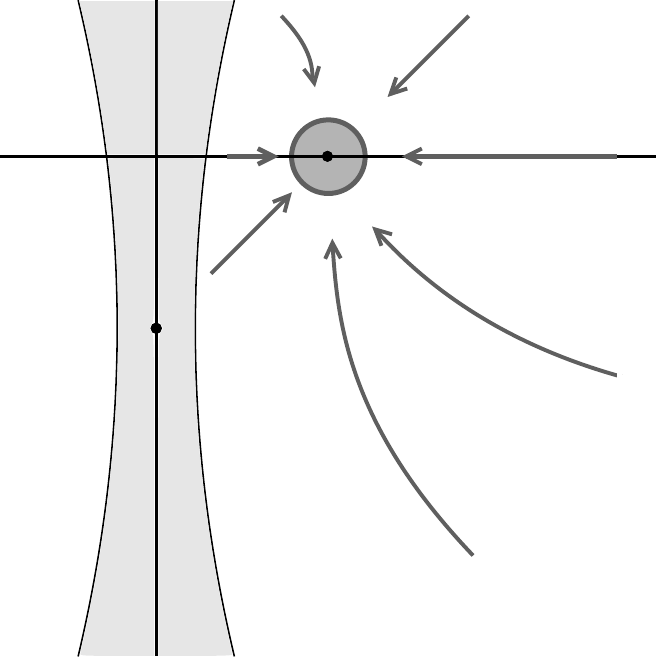}
\put(51,85){$b_g^{\varepsilon}$}
\put(-12,75){$X_g^+$}
\put(13,47){$x_g^-$}
\put(14,104){$B_{g^{-1}}^{\varepsilon}$}
\end{overpic}
\hspace{0.7cm}
\begin{overpic}[scale=0.4,percent]{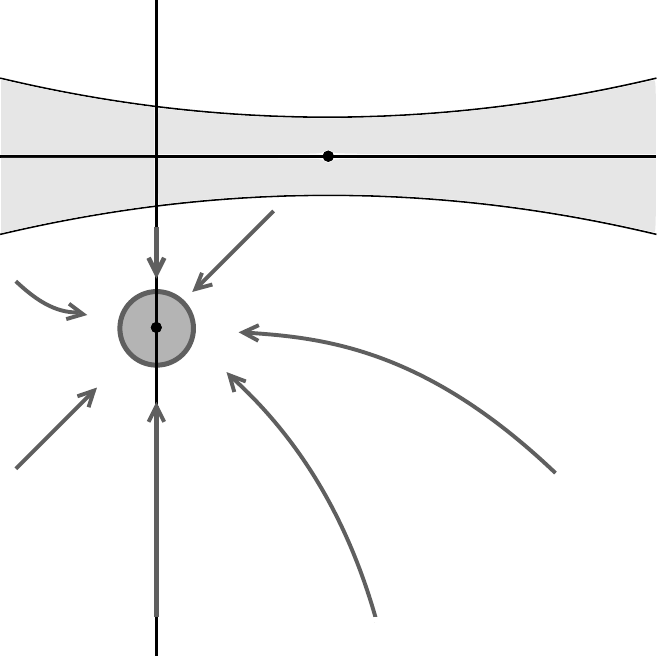}
\put(52,80){$x_g^+$}
\put(102,80){$B_g^{\varepsilon}$}
\put(5,44.5){$b_{g^{-1}}^{\varepsilon}$}
\put(20,103){$X_g^-$}
\end{overpic}
\caption{Left (\resp right) panel: the dynamics of a large power of $g$ (\resp $g^{-1}$) on $\PP(\R^d)$ for a biproximal element $g\in\SL(d,\R)$}
\label{fig:ping-pong-proj}
\end{figure}

Let $\gamma_1,\dots,\gamma_m\in G$ be biproximal elements which are ``transverse'' in the sense that $x_{\gamma_i}^+, x_{\gamma_i}^- \notin X_{\gamma_j}^+\cup X_{\gamma_j}^-$ for all $1\leq i\neq j\leq m$ (in other words, the configuration of pairs $(x_{\gamma_i}^{\bullet},X_{\gamma_i}^{\bullet})_{1\leq i\leq m,\ \bullet\in\{+,-\}}$ with $x_{\gamma_i}^{\bullet}\in X_{\gamma_i}^{\bullet}$ is generic).
Up to replacing each $\gamma_i$ by a large power, we may assume that there exists $\varepsilon>0$ such that $\PP(\R^d) \smallsetminus \bigcup_{i=1}^m (B_{\gamma_i}^{\varepsilon} \cup B_{\gamma_i^{-1}}^{\varepsilon})$ has nonempty interior and such that for any $\alpha\neq\beta$ in $\{\gamma_1,\gamma_1^{-1},\dots,\gamma_m,\gamma_m^{-1}\}$, the sets $b_{\alpha}^{\varepsilon}$ and $B_{\beta}^{\varepsilon}$ have disjoint closures in $\PP(\R^d)$ and $\alpha$ sends the interior of $\PP(\R^d)\smallsetminus B_{\alpha^{-1}}^{\varepsilon}$ into $b_{\alpha}^{\varepsilon}$ (see Figure~\ref{fig:ping-pong-config-proj} for $m=2$).
\begin{figure}[ht!]
\vspace{0.4cm}
\begin{overpic}[scale=0.4,percent]{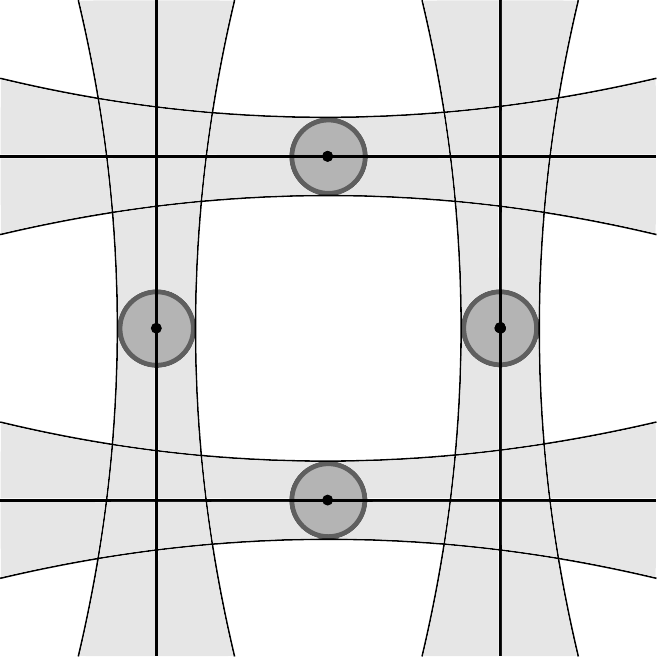}
\put(46,86){$b_{\gamma_1}^{\varepsilon}$}
\put(12,106){$B_{\gamma_1^{-1}}^{\varepsilon}$}
\put(3,48){$b_{\gamma_1^{-1}}^{\varepsilon}$}
\put(-13,82){$B_{\gamma_1}^{\varepsilon}$}
\put(46,10){$b_{\gamma_2}^{\varepsilon}$}
\put(76,106){$B_{\gamma_2^{-1}}^{\varepsilon}$}
\put(85,48){$b_{\gamma_2^{-1}}^{\varepsilon}$}
\put(102,28){$B_{\gamma_2}^{\varepsilon}$}
\end{overpic}
\caption{A ping pong configuration as in Claim~\ref{claim:ping-pong-proj}}
\label{fig:ping-pong-config-proj}
\end{figure}
Let $\Gamma$ be the subgroup of~$G$ generated by $\gamma_1,\dots,\gamma_m$.
The following is analogous to Claim~\ref{claim:ping-pong-rank-one}.

\begin{claim} \label{claim:ping-pong-proj}
The group $\Gamma$ is a nonabelian free group with free generating subset $\{\gamma_1,\dots,\gamma_m\}$.
It is discrete in~$G$.
\end{claim}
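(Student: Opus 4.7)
The plan is to adapt the ping-pong argument of Claim~\ref{claim:ping-pong-rank-one} to the projective setting, using the fact that a large power of a biproximal element has the same ``send everything outside a hyperplane neighbourhood into a small ball'' dynamics on $\PP(\R^d)$ as an isometry with an attracting/repelling fixed-point pair has on $\di\HH^n$. The ping-pong table will be $\PP(\R^d)$ with its $2m$ ``pocket'' regions $b_{\gamma_i}^{\varepsilon}, b_{\gamma_i^{-1}}^{\varepsilon}$, and the ``central'' region will be $\mathcal{D} := \PP(\R^d)\smallsetminus\bigcup_{i=1}^m (B_{\gamma_i}^{\varepsilon}\cup B_{\gamma_i^{-1}}^{\varepsilon})$, which has nonempty interior by the standing assumption.

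First I would consider an arbitrary reduced word $\gamma = \alpha_1\cdots\alpha_N$ with $\alpha_j \in \{\gamma_1^{\pm 1},\dots,\gamma_m^{\pm 1}\}$ and $\alpha_{j+1}\neq\alpha_j^{-1}$, and pick any $x\in\mathrm{Int}(\mathcal{D})$. Since $x \notin B_{\alpha_N^{-1}}^{\varepsilon}$, the dynamical hypothesis gives $\alpha_N\cdot x \in b_{\alpha_N}^{\varepsilon}$. The inductive step is that $b_{\alpha_{j+1}}^{\varepsilon}$ has closure disjoint from $B_{\alpha_j^{-1}}^{\varepsilon}$: this follows from the hypothesis that $b_{\alpha}^{\varepsilon}$ and $B_{\beta}^{\varepsilon}$ have disjoint closures whenever $\alpha\neq\beta$, applied to $\alpha=\alpha_{j+1}$ and $\beta=\alpha_j^{-1}$ (which are distinct since the word is reduced). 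Hence $\alpha_j$ sends $b_{\alpha_{j+1}}^{\varepsilon}$ into $b_{\alpha_j}^{\varepsilon}$, and by induction $\gamma\cdot x \in \overline{b_{\alpha_1}^{\varepsilon}}$.

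To finish, I would observe, as in the proof of Claim~\ref{claim:ping-pong-rank-one}, that the set of $g\in G$ mapping $\mathrm{Int}(\mathcal{D})$ into the closed set $\bigcup_{i,\pm} \overline{b_{\gamma_i^{\pm 1}}^{\varepsilon}}$ is closed in~$G$; since $\mathrm{Int}(\mathcal{D})$ is open and, using $b_{\alpha}^{\varepsilon}\subset B_{\alpha}^{\varepsilon}$, is not covered by finitely many closures $\overline{b_{\gamma_i^{\pm 1}}^{\varepsilon}}$ (each of which meets $\mathrm{Int}(\mathcal{D})$ only in a nowhere-dense piece of the boundary of some $B_{\alpha}^{\varepsilon}$), the identity element does not belong to this closed set. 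Thus no reduced word in $\gamma_1,\dots,\gamma_m$ represents the identity nor accumulates at the identity in~$G$, which gives simultaneously freeness on the generating set and discreteness of~$\Gamma$.

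I expect the only subtle point to be the bookkeeping needed to ensure that the chain of containments $\alpha_j\cdot b_{\alpha_{j+1}}^{\varepsilon}\subset b_{\alpha_j}^{\varepsilon}$ really follows from the hypotheses as stated; everything else is a direct transcription of the rank-one ping-pong proof, with the ``two-region'' dynamics of an isometry on $\di\HH^n$ replaced by the ``hyperplane versus point'' North-South dynamics on $\PP(\R^d)$.
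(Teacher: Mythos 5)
Your proposal is correct and follows essentially the same route as the paper's proof: the same chain of inclusions $\alpha_j\cdot b_{\alpha_{j+1}}^{\varepsilon}\subset b_{\alpha_j}^{\varepsilon}$ (justified, exactly as in the paper, by the disjoint-closures hypothesis applied to the pair $(\alpha_{j+1},\alpha_j^{-1})$, which is legitimate because the word is reduced), followed by the observation that the set of $g\in G$ sending the central region into the union of the closed pockets is closed and misses the identity, giving freeness and discreteness at once. The only differences are cosmetic (tracking a single point of $\mathrm{Int}(\mathcal{D})$ rather than the whole central region, and noting $b_\alpha^\varepsilon\subset B_\alpha^\varepsilon$ explicitly), and in fact $\mathrm{Int}(\mathcal{D})$ is entirely disjoint from each $\overline{b_{\gamma_i^{\pm1}}^{\varepsilon}}$, so your "nowhere-dense" hedge is not even needed.
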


\begin{proof}
Consider any reduced word $\gamma = \gamma_{i_1}^{\sigma_1} \dots \gamma_{i_N}^{\sigma_N}$ in the alphabet $\{ \gamma_1^{\pm 1},\dots,\gamma_m^{\pm 1}\}$, where $1\leq i_j\leq m$ and $\sigma_j\in\{\pm 1\}$ for all $1\leq j\leq\nolinebreak N$.
Using the inclusions $\alpha\cdot\mathrm{Int}\big(\PP(\R^d)\smallsetminus B_{\alpha^{-1}}^{\varepsilon}\big) \subset b_{\alpha}^{\varepsilon}$ for $\alpha = \gamma_{i_j}^{\sigma_j}$ and $b_{\alpha}^{\varepsilon} \subset \mathrm{Int}\big(\PP(\R^d)\smallsetminus B_{\beta^{-1}}^{\varepsilon}\big)$ for $(\alpha,\beta) = (\gamma_{i_j}^{\sigma_j},\gamma_{i_{j-1}}^{\sigma_{j-1}})$ with $j\geq 2$, we see that the element of~$\Gamma$ corresponding to $\gamma$ sends $\PP(\R^d) \smallsetminus \bigcup_{i=1}^m (B_{\gamma_i}^{\varepsilon} \cup B_{\gamma_i^{-1}}^{\varepsilon})$ into the closure of $b_{\gamma_{i_1}^{\sigma_1}}^{\varepsilon}$ (hence of $B_{\gamma_{i_1}^{\sigma_1}}^{\varepsilon}$) in $\PP(\R^d)$.
On the other hand, the set of elements $g\in G$ sending $\PP(\R^d) \smallsetminus \bigcup_{i=1}^m (B_{\gamma_i}^{\varepsilon} \cup B_{\gamma_i^{-1}}^{\varepsilon})$ into the closure of $\bigcup_{i=1}^m (B_{\gamma_i}^{\varepsilon} \cup B_{\gamma_i^{-1}}^{\varepsilon})$ in $\PP(\R^d)$ is a closed subset of~$G$ that does not contain the identity element.
\end{proof}

Similarly to the classical strong Schottky groups of Section~\ref{subsec:ex-rank-1}, the group $\Gamma$ admits nontrivial continuous deformations $(\rho_t)_{t\in [0,1)}\subset\Hom(\Gamma,G)$, obtained by independently deforming the image of each generator~$\gamma_i$; moreover, there is a neighbourhood of the natural inclusion $\rho_0 : \Gamma\hookrightarrow G$ consisting entirely of injective and discrete representations.

\subsubsection{Schottky groups with disjoint ping pong domains}

In certain situations it is possible to construct discrete subgroups of~$G$, with ping pong dynamics, for which the ping pong domains are pairwise disjoint, as in the case of the classical rank-one Schottky groups of Section~\ref{subsec:ex-rank-1}.
Achieving this disjointness may require using a slightly modified ping pong compared to Figures \ref{fig:ping-pong-proj} and~\ref{fig:ping-pong-config-proj}, allowing the attracting and repelling subsets of the generators to be larger than points.

Such a construction has been made in $G = \PGL(2n,\mathbb{K})$, acting on the projective space $\PP(\mathbb{K}^{2n})$, for $\mathbb{K}=\R$ or~$\C$: the first examples were constructed by Nori in the 1980s, for $\mathbb{K}=\C$, then generalised by Seade and Verjovsky (see \cite{sv03}); it was observed in \cite{hv08} that the construction also works for $\mathbb{K}=\R$.
The idea is to consider pairwise disjoint $(n-1)$-dimensional projective subspaces $X_1^+,X_1^-,\dots,X_m^+,X_m^-$ of $\PP(\mathbb{K}^{2n})$ and elements $\gamma_1,\dots,\gamma_m\in G$ such that for any $1\leq i\leq m$ we have $\gamma_i^{\pm k} \cdot x \to X_i^{\pm}$ for all $x \in \PP(\mathbb{K}^{2n})\smallsetminus X_i^{\mp}$ as $k\to +\infty$, uniformly on compact sets.
Up to replacing each $\gamma_i$ by a large power, we may assume that there exist tubular neighbourhoods $B_i^{\pm}$ of $X_i^{\pm}$ such that $B_1^+,B_1^-,\dots,B_m^+,B_m^-$ are pairwise disjoint, $\PP(\mathbb{K}^{2n}) \smallsetminus \bigcup_{i=1}^m (B_i^- \cup B_i^+)$ has nonempty interior, and $\gamma_i \cdot \mathrm{Int}(\PP(\mathbb{K}^{2n})\smallsetminus B_i^-) = B_i^+$ for all~$i$.
Then the subgroup $\Gamma$ of~$G$ generated by $\gamma_1,\dots,\gamma_m$ is a nonabelian free group with free generating subset $\{\gamma_1,\dots,\gamma_m\}$.
It is discrete in~$G$, and it acts properly discontinuously on $\Omega := \mathrm{Int}(\bigcup_{\gamma\in\Gamma} \gamma\cdot\nolinebreak\mathcal{D})$ with fundamental domain~$\mathcal{D}$, as in Remark~\ref{rem:DoD-rank-one}.
As for the classical strong Schottky groups of Section~\ref{subsec:ex-rank-1}, there is a neighbourhood of the natural inclusion $\rho_0 : \Gamma\hookrightarrow G$ consisting entirely of injective and discrete representations.

\subsubsection{Crooked Schottky groups}

Here is another ping pong construction, introduced and studied in~\cite{bk-Schottky}.

Let $G = \Sp(2n,\R)$ be the group of elements of $\GL(2n,\R)$ that preserve the skew-symmetric bilinear form $\omega(v,v') = \sum_{i=1}^{2n} (-1)^i v_i v'_{2n+1-i}$ on~$\R^{2n}$.
A \emph{symplectic basis} of~$\R^{2n}$ is a basis in which the matrix of $\omega$ is antidiagonal with entries $1,-1,\dots,1,-1$; for instance, the canonical basis is a symplectic basis.
To any symplectic basis $(e_1,\dots,e_{2n})$ of~$\R^{2n}$ we as\-sociate an open simplex $B = \PP(\R^{>0}\text{-span}(e_1,\dots,e_{2n}))$ in $\PP(\R^{2n})$, which we call a \emph{symplectic simplex}.
Its \emph{dual} $B^* := \{[v]\in\nolinebreak\PP(\R^{2n}) \,|\, \PP(v^{\perp})\cap\nolinebreak\overline{B}=\nolinebreak\emptyset\}$ (where $v^{\perp}$ denotes the orthogonal of $v$ with respect to~$\omega$ and $\overline{B}$ the closure of $B$ in $\PP(\R^{2n})$) is still a symplectic simplex, associated to the symplectic basis $(e_{2n},-e_{2n-1},\dots,e_2,-e_1)$.
Note that $B$ and~$B^*$ are two of the $2^{2n-1}$ connected components of $\PP(\R^{2n})\smallsetminus\bigcup_{i=1}^{2n} \PP(e_i^{\perp})$ (see Figure~\ref{fig:tetrahedra}).
We also make the elementary observation that for any symplectic simplices $B_1$ and~$B_2$, we have $B_1 \subset B_2^*$ if and only if $B_2 \subset B_1^*$.

\begin{figure}[ht!]
\begin{overpic}[scale=0.4,percent]{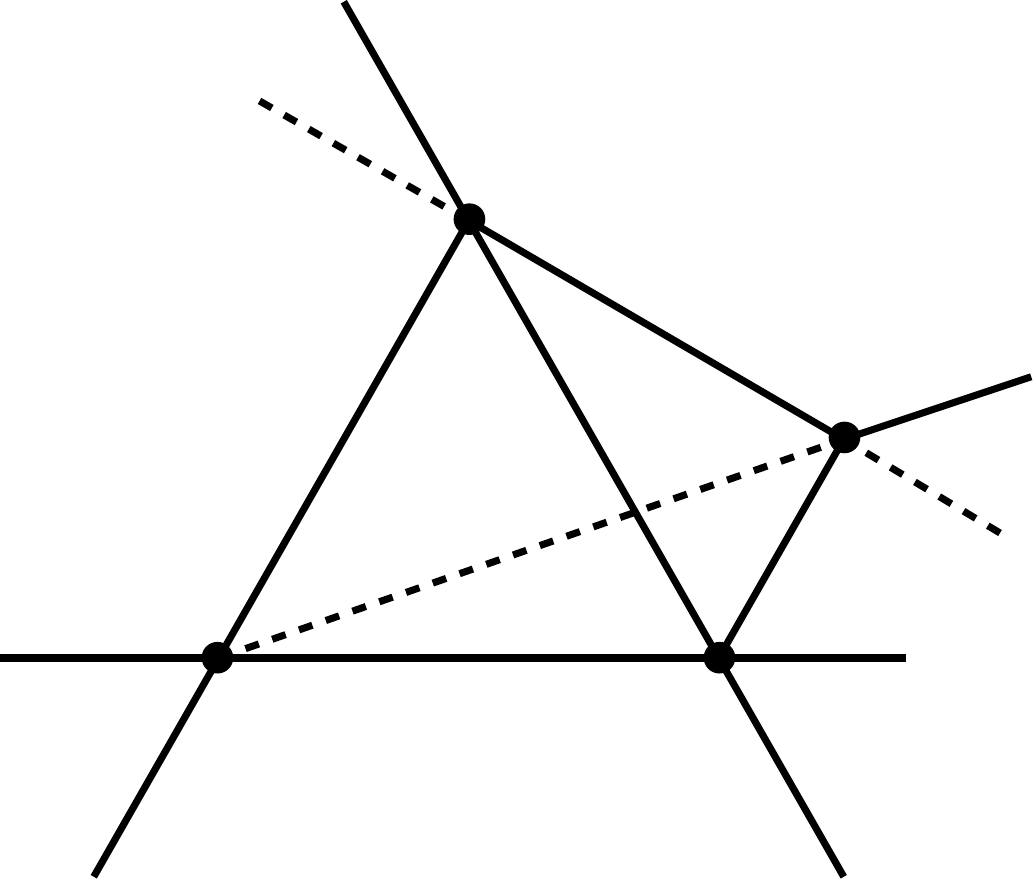}
\put(48,66){$[e_1]$}
\put(80,48){$[e_2]$}
\put(21,15){$[e_3]$}
\put(63,15){$[e_4]$}
\put(44,39){$B$}
\put(18,46){$B^*$}
\put(80,30){$B^*$}
\end{overpic}
\caption{A symplectic simplex $B$ of $\PP(\R^4)$, associated to a symplectic basis $(e_1,e_2,e_3,e_4)$ of~$\R^4$, and its dual $B^*$}
\label{fig:tetrahedra}
\end{figure}

\begin{lem} \label{lem:construct-simplices}
For any $m\geq 2$, there exist $2m$ symplectic simplices $B_1^{\pm}$, $\dots, B_m^{\pm}$ in $\PP(\R^{2n})$ such that $B\subset {B'}^*$ for all $B\neq B'$ in $\{B_1^{\pm}, \dots, B_m^{\pm}\}$.
\end{lem}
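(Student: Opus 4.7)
The plan is to pick $2m$ points of $\PP(\R^{2n})$ in sufficiently general position with respect to~$\omega$, to build a small symplectic simplex around each, and to deduce the duality condition by continuity.

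First, using that $\omega$ is nondegenerate, choose points $[v_1],\dots,[v_{2m}]\in\PP(\R^{2n})$ with $\omega(v_i,v_j)\neq 0$ for all $i\neq j$ (where $v_i$ denotes any nonzero lift). Such a configuration exists because, for each fixed pair $i\neq j$, the vanishing locus of~$\omega$ is a proper Zariski-closed subset of $\PP(\R^{2n})\times\PP(\R^{2n})$; we can therefore choose the $[v_i]$'s avoiding all $\binom{2m}{2}$ such conditions simultaneously. By continuity of~$\omega$, we may then choose open neighborhoods $U_i\ni[v_i]$ with pairwise disjoint closures, small enough that $\omega(v,w)\neq 0$ for all $i\neq j$, all $[v]\in\overline{U_i}$, and all $[w]\in\overline{U_j}$ (for nonzero lifts $v,w$).

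The main step is to construct, for each~$i$, a symplectic simplex $B_i$ with $\overline{B_i}\subset U_i$. We achieve this using proximal elements of $\Sp(2n,\R)$. Let $B_0$ be the standard symplectic simplex associated to the canonical basis $(e_1,\dots,e_{2n})$. Since $B_0^*$ is a nonempty open subset of $\PP(\R^{2n})$, we may pick $[r_i]\in B_0^*$ with $\omega(v_i,r_i)\neq 0$; rescaling $r_i$, we arrange that $\omega(v_i,r_i)=1$, and extend $\{v_i,-r_i\}$ to a symplectic basis $(f_1,\dots,f_{2n})$ of~$\R^{2n}$ with $f_1=v_i$ and $f_{2n}=-r_i$. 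Let $h_i\in\Sp(2n,\R)$ act in this basis as $\mathrm{diag}(\lambda,\mu,1,\dots,1,\mu^{-1},\lambda^{-1})$ for some fixed $\lambda>\mu>1$. Then $h_i$ is proximal on $\PP(\R^{2n})$, with attracting fixed point $[v_i]$ and repelling hyperplane $\PP(r_i^{\perp})$; moreover, the condition $[r_i]\in B_0^*$ means precisely that $\overline{B_0}\cap\PP(r_i^{\perp})=\emptyset$, so $\overline{B_0}$ is compactly contained in the basin of attraction of~$h_i$. Hence $h_i^k\cdot\overline{B_0}$ converges to $\{[v_i]\}$ in the Hausdorff sense as $k\to+\infty$, and for $k$ large enough the symplectic simplex $B_i:=h_i^k\cdot B_0$ satisfies $\overline{B_i}\subset U_i$.

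Finally, for $i\neq j$, any $[v]\in\overline{B_i}\subset U_i$, and any $[w]\in\overline{B_j}\subset U_j$, the construction of the $U_i$ gives $\omega(v,w)\neq 0$; this translates to $\PP(v^{\perp})\cap\overline{B_j}=\emptyset$, \ie $[v]\in B_j^*$. Thus $B_i\subset B_j^*$ for all $i\neq j$, and relabeling the simplices as $B_1^{\pm},\dots,B_m^{\pm}$ concludes the proof. The main obstacle is the construction in the third step: since positive diagonal symplectic transformations preserve~$B_0$, we cannot simply rescale to shrink it, and must instead exploit that $\Sp(2n,\R)$ contains elements with a single simple real eigenvalue of largest modulus, giving proximal dynamics on $\PP(\R^{2n})$ that allow us to contract $\overline{B_0}$ toward any prescribed point.
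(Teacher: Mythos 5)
Your proof is correct, and it takes a genuinely different route from the paper's. The paper argues greedily: it picks $B_1$, then inductively chooses $B_j$ with $\overline{B_j}\subset\bigcap_{i<j}B_i^*$, relying on two asserted facts --- that every nonempty open subset of $\PP(\R^{2n})$ contains a symplectic simplex, and that every neighbourhood of $\overline{B}$ meets $B^*$ (which is what keeps $\bigcap_{i<j}B_i^*$ nonempty at each step) --- and then invokes the reciprocity $B\subset {B'}^*\Leftrightarrow B'\subset B^*$ to upgrade the one-sided containments to mutual ones. You instead fix $2m$ points of $\PP(\R^{2n})$ in general position for~$\omega$, shrink a reference simplex onto each of them by iterating a proximal element of $\Sp(2n,\R)$, and read off all the dualities simultaneously from the non-vanishing of $\omega$ on $\overline{U_i}\times\overline{U_j}$. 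Your version buys two things: it supplies an explicit proof (via proximal contraction) of the fact that any nonempty open set contains a symplectic simplex, which the paper only states; and, by phrasing the duality as the symmetric condition $\omega(v,w)\neq 0$ for $[v]\in\overline{B_i}$, $[w]\in\overline{B_j}$, it makes the reciprocity automatic rather than a separate observation. The paper's induction is shorter on the page but leans on those two unproved facts. The only points to watch in yours are routine: the sign normalisation when completing $(v_i,-r_i)$ to a symplectic basis depends on the convention for~$\omega$ (and works out here), and the factor $\mu$ in $\mathrm{diag}(\lambda,\mu,1,\dots,1,\mu^{-1},\lambda^{-1})$ is unnecessary, since $\mathrm{diag}(\lambda,1,\dots,1,\lambda^{-1})$ is already proximal with the required attracting point and repelling hyperplane.
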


\begin{proof}
Choose any symplectic simplex~$B_1$.
Note that any nonempty open subset of $\PP(\R^{2n})$ contains a symplectic simplex; therefore, we can find a symplectic simplex $B_2$ such that $\overline{B_2} \subset B_1^*$.
Moreover, any neighbourhood of the closure of a symplectic simplex meets the dual of the simplex; therefore $B_1^* \cap B_2^*$ is nonempty.
By induction, we construct symplectic simplices $B_1,\dots,B_{2m}$ such that $\overline{B_j} \subset \bigcap_{i=1}^{j-1} B_i^*$ for all $2\leq j\leq 2m$.
We then have $B_j \subset B_i^*$ for all $1\leq i<j\leq 2m$.
By the elementary observation above, we also have $B_i \subset B_j^*$ for all $1\leq i<j\leq 2m$.
We can then take $(B_i^+,B_i^-) := (B_i,B_{m+i})$ for all $1\leq i\leq m$.
\end{proof}

For $m\geq 2$, choose symplectic simplices $B_1^{\pm}, \dots, B_m^{\pm}$ as in Lemma~\ref{lem:construct-simplices}, and elements $\gamma_1,\dots,\gamma_m\in G$ such that $\gamma_i\cdot (B_i^-)^* = B_i^+$ for all $1\leq i\leq m$ (these exist since $G$ acts transitively on the set of symplectic simplices).
A ping pong argument as in Claim~\ref{claim:ping-pong-rank-one} shows that the subgroup $\Gamma$ of~$G$ generated by $\gamma_1,\dots,\gamma_m$ is a nonabelian free group with free generating subset $\{\gamma_1,\dots,\gamma_m\}$, and that it is discrete in~$G$.

Moreover, there is an interesting counterpart of Remark~\ref{rem:DoD-rank-one}, not in the projective space $\PP(\R^{2n})$, but in the space $\mathrm{Lag}(\R^{2n})$ of \emph{Lagrangians} of $(\R^{2n},\omega)$, \ie of $n$-dimensional linear subspaces of~$\R^{2n}$ which are totally isotropic for~$\omega$.
For this, we associate to any symplectic simplex $B = \PP(\R^{>0}\text{-span}(e_1,\dots,e_{2n}))$ of $\PP(\R^{2n})$ an open subset of $\mathrm{Lag}(\R^{2n})$, namely
$\mathcal{H}(B) := \{ L\in\mathrm{Lag}(\R^{2n}) ~|~ L\cap B \neq \emptyset\}$
where we see each~$L\in\mathrm{Lag}(\R^{2n})$ as an $(n-1)$-dimensional projective subspace of $\PP(\R^{2n})$.
In \cite{bk-Schottky} we prove the following remarkable property: for any symplectic simplex~$B$, we have
$\mathrm{Lag}(\R^{2n}) = \overline{\mathcal{H}(B)} \sqcup \mathcal{H}(B^*).$
In other words, $\mathcal{H}(B)$ and $\mathcal{H}(B^*)$ are two open ``half-spaces'' of $\mathrm{Lag}(\R^{2n})$, bounded by their common boundary $\partial\mathcal{H}(B) = \overline{\mathcal{H}(B)}\smallsetminus\mathcal{H}(B) = \overline{\mathcal{H}(B^*)}\smallsetminus\mathcal{H}(B^*)$.
We observe \cite{bk-Schottky} that these boundaries $\partial\mathcal{H}(B)$ are nice geometric objects which for $n=2$ coincide with the \emph{crooked surfaces} of Frances \cite{fra03} in the Einstein universe $\mathrm{Ein}^3 \simeq \mathrm{Lag}(\R^4)$.
Remark~\ref{rem:DoD-rank-one} generalises as follows: consider symplectic simplices $B_1^{\pm}, \dots, B_m^{\pm}$ and elements $\gamma_1,\dots,\gamma_m\in G$ as above.
If we set $\mathcal{D} := \mathrm{Lag}(\R^{2n}) \smallsetminus \bigcup_{i=1}^m (\mathcal{H}(B_i^-) \cup \mathcal{H}(B_i^+))$, then the group $\Gamma$ generated by $\gamma_1,\dots,\gamma_m$ acts properly discontinuously on $\Omega := \mathrm{Int}(\bigcup_{\gamma\in\Gamma} \gamma\cdot\nolinebreak\mathcal{D})$ with fundamental domain~$\mathcal{D}$.
We call $\Gamma$ a \emph{crooked Schottky group}.

As in the classical case of Section~\ref{subsec:ex-rank-1}, continuous deformations of configurations of symplectic simplices yield nontrivial continuous deformations $(\rho_t)_{t\in [0,1)}\subset\Hom(\Gamma,G)$ of crooked Schottky groups $\Gamma$ for which each $\rho_t$ is injective with discrete image.
If the symplectic simplices $B_1^{\pm},\dots,B_m^{\pm}$ in the initial configuration have pairwise disjoint \emph{closures} (\ie $\Gamma$ is a ``strong'' crooked Schottky group), then the natural inclusion $\rho_0 : \Gamma\hookrightarrow G$ admits an open neighbourhood in $\Hom(\Gamma,G)$ consisting entirely of injective and discrete representations.

\subsection{Higher-rank deformations of Fuchsian representations} \label{subsec:deform-Fuchsian-higher-rank}

Inspired by the quasi-Fuchsian representations and their higher-dimen\-sional analogues from Section~\ref{subsec:ex-rank-1}, here is one strategy for constructing ``flexible'' infinite discrete subgroups, beyond nonabelian free groups, in semisimple Lie groups~$G$ of higher real rank.
Consider a finitely generated group~$\Gamma_0$, an injective and discrete representation $\sigma_0$ of $\Gamma_0$ into a simple Lie group~$G'$ of real rank one, and a nontrivial Lie group homomorphism $\tau : G'\to G$.
Consider the composed representation
$$\rho_0 : \Gamma_0 \overset{\sigma_0}{\longhookrightarrow} G' \overset{\tau}{\longhookrightarrow} G.$$
In some important cases (see \eg Facts \ref{fact:Ano-open} and~\ref{fact:cc-embed-Ano}), there will be an open neighbourhood of $\rho_0$ in $\Hom(\Gamma_0,G)$ consisting entirely of injective and discrete representations.
The goal is then to deform $\rho_0$ nontrivially in $\Hom(\Gamma_0,G)$ outside of $\Hom(\Gamma_0,G')$, so as to obtain discrete subgroups of $G$ that are isomorphic to~$\Gamma_0$ but not conjugate to~$\Gamma_0$ or any subgroup of~$G'$ (ideally Zariski-dense discrete subgroups of~$G$).

This strategy works well, for instance, for $\Gamma_0 = \pi_1(S)$ where $S$ is a closed orientable surface of genus $\geq 2$ as in Section~\ref{subsec:ex-rank-1}, and $G' = \SL(2,\R)$ or $\PSL(2,\R)$.
Let us give three examples in this setting.

\subsubsection{Barbot representations}

For $d\geq 2$, consider the standard embedding $\tau : G'=\SL(2,\R)\hookrightarrow G=\SL(d,\R)$, acting trivially on a $(d-2)$-dimensional linear subspace of~$\R^d$.
Then there is a neighbourhood of $\rho_0$ in $\Hom(\Gamma_0,G)$ consisting entirely of injective and discrete representations (see Section~\ref{subsec:Anosov}).
Nontrivial continuous deformations $(\rho_t)_{t\in [0,1)} \subset \Hom(\Gamma_0,G)$ exist; for $d=3$, they were studied by Barbot, who particularly investigated \cite{bar01} the case that the $\rho_t$ take values in $\GL(2,\R)\ltimes\R^2$, seen as the subgroup of~$G$ consisting of lower block-triangular matrices with blocks of size $(2,1)$.

\subsubsection{Hitchin representations}

For $d\geq 2$, consider the irreducible embedding $\tau_d : G' = \PSL(2,\R)\hookrightarrow G = \PSL(d,\R)$.
It is unique modulo conjugation by $\PGL(d,\R)$, and given concretely as follows: identify $\R^d$ with the vector space $\R[X,Y]_{d-1}$ of real polynomials in two variables $X,Y$ which are homogeneous of degree $d-1$.
The group $\SL(2,\R)$ acts on $\R[X,Y]_{d-1}$ by
$$\begin{pmatrix} \mathtt{a} & \mathtt{b}\\ \mathtt{c} & \mathtt{d}\end{pmatrix} \cdot P\!\begin{pmatrix}X\\Y\end{pmatrix} = P\!\left(\!\begin{pmatrix}\mathtt{a} & \mathtt{b}\\ \mathtt{c} & \mathtt{d}\end{pmatrix}^{-1} \! \begin{pmatrix}X\\Y\end{pmatrix}\!\right),$$
and this defines an irreducible representation $\SL(2,\R) \to \SL(\R[X,Y]_{d-1})\linebreak\simeq\SL(d,\R)$, which is injective if $d$ is even, and has kernel $\{\pm\mathrm{I}\}$ if $d$ is odd.
It factors into an embedding $\tau_d : \PSL(2,\R)\hookrightarrow\PSL(d,\R)$.
In this setting, the following result was proved by Choi--Goldman \cite{cg93} for $d=3$, and by Labourie \cite{lab06} and Fock--Goncharov \cite{fg06} for general~$d$ (recall that the case $d=2$ is due to Goldman \cite{gol-PhD}).

\begin{thm} \label{thm:Hitchin-comp}
Let $\Gamma_0 = \pi_1(S)$ be a closed surface group and $\sigma_0 : \Gamma_0\to\PSL(2,\R)$ an injective and discrete representation.
For any $d\geq 2$, the connected component of $\rho_0 := \tau_d\circ\sigma_0$ in $\Hom(\Gamma_0,\PSL(d,\R))$ consists entirely of injective and discrete representations.
\end{thm}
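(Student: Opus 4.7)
The plan is to show that the subset $U \subset C$ of injective representations with discrete image is both open and closed in $C$; since $\rho_0\in U$ and $C$ is connected, this will force $U=C$.

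For openness I would verify that $\rho_0=\tau_d\circ\sigma_0$ is a Borel--Anosov representation of $\Gamma_0$ into $\PSL(d,\R)$, and then invoke the general stability theorem (to be stated in Section~\ref{subsec:Anosov}), which says that Anosov representations form an open subset of $\Hom(\Gamma_0,\PSL(d,\R))$ consisting of faithful representations with discrete image. The Anosov property of $\rho_0$ follows from the explicit form of $\tau_d$: since $\sigma_0$ is torsion-free, injective, discrete and cocompact, every $\sigma_0(\gamma)$ with $\gamma\neq 1$ is hyperbolic, conjugate to $\pm\mathrm{diag}(\lambda,\lambda^{-1})$ with $|\lambda|>1$, and reading off the action of $\tau_d$ on the monomial basis of $\R[X,Y]_{d-1}$ shows that $\rho_0(\gamma)$ has eigenvalues $\lambda^{d-1},\lambda^{d-3},\dots,\lambda^{-(d-1)}$, of pairwise distinct moduli; equivalently, $\rho_0(\gamma)$ is regular loxodromic. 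The $\PSL(2,\R)$-equivariant map sending $x\in\partial\HH^2$ to the osculating flag at $\tau_d(x)$ of the Veronese curve $\tau_d(\partial\HH^2)\subset\PP(\R^d)$ is continuous, $\rho_0$-equivariant, injective, and hyperconvex, which is the standard criterion for $\rho_0$ to be Borel--Anosov.

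For closedness, the substantive part of the theorem, I would follow Labourie~\cite{lab06} and show that for every $\rho\in U$ there exists a continuous $\rho$-equivariant \emph{Frenet curve} $\xi_\rho:\partial\pi_1(S)\to\mathcal{F}_d$: a map such that for every tuple of distinct points $x_1,\dots,x_\ell\in\partial\pi_1(S)$ and every decomposition $k_1+\dots+k_\ell=d$, the $k_i$-dimensional subspaces of $\xi_\rho(x_i)$ sum in direct sum. Hyperconvex equivariant curves force faithfulness, discreteness, and the Anosov property. Along a continuous path $(\rho_t)_{t\in[0,1]}\subset C$ with $\rho_t\in U$ for $t<1$, I would then show that the family $(\xi_{\rho_t})$ is equicontinuous, extract a limit $\xi_{\rho_1}$ via Arzel\`a--Ascoli, and verify that hyperconvexity passes to the limit; Labourie obtains this by a delicate cross-ratio analysis along the geodesic flow of~$S$. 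An alternative closedness argument, due to Fock--Goncharov~\cite{fg06}, uses an explicit positive-real parametrisation of $C$ attached to an ideal triangulation of~$S$, in which positivity, and hence transversality of the associated boundary curve into $\mathcal{F}_d$, is manifestly closed.

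The main obstacle is exactly this closedness step: nothing a priori prevents the boundary maps from degenerating along a limit sequence, and excluding such degeneration requires the full force of either Labourie's dynamical estimates or Fock--Goncharov's positivity. For small $d$ the argument simplifies drastically: $d=2$ is Goldman's classical description of the Teichm\"uller component~\cite{gol-PhD}, while for $d=3$ the Choi--Goldman theorem~\cite{cg93} identifies $C$ with the deformation space of convex $\mathbb{RP}^2$-structures on $S$, where convexity of the developing map is both open and closed on $C$ and directly yields injectivity and discreteness from the geometric structure.
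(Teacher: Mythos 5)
Your ingredients are the right ones and match the proofs the paper sketches (Labourie's Frenet curves and Fock--Goncharov's positivity for general $d$, Choi--Goldman's convex projective structures for $d=3$, the Veronese/osculating-flag computation at the Fuchsian locus), but the set on which you run the connectedness argument is wrong, and as written the argument does not close up. You take $U$ to be the set of injective representations with discrete image inside the component $C$. Your openness step only shows that $U$ contains a neighbourhood of the single point $\rho_0$ (or, more generally, of any point of $U$ already known to be Anosov); it says nothing about openness of $U$ at an arbitrary $\rho\in U$, and in higher rank the locus of discrete faithful representations is genuinely not open in general (think of cusps on the boundary of quasi-Fuchsian space in $\PSL(2,\C)$: discrete and faithful, yet approximated by indiscrete representations). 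Symmetrically, your closedness step begins by asserting that \emph{every} $\rho\in U$ carries a hyperconvex Frenet curve --- but an arbitrary injective discrete representation comes with no boundary map a priori, and producing one is precisely what the open-and-closed argument is supposed to deliver; it cannot be the hypothesis of the closedness step.

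The fix, which is what Labourie and Fock--Goncharov actually do and what the paper's sketch alludes to when it says the boundary maps are obtained ``using stronger properties satisfied by these maps,'' is to run the open-and-closed argument on the set $A\subset C$ of representations admitting a continuous equivariant hyperconvex Frenet curve into $\mathrm{Flags}(\R^d)$ (equivalently, for Fock--Goncharov, the positive representations). Then $\rho_0\in A$ by your eigenvalue and osculating-flag computation; $A$ is open by Anosov stability together with the openness of hyperconvexity among Anosov representations; $A$ is closed by exactly the equicontinuity and cross-ratio (or positivity) analysis you describe; and every element of $A$ is injective and discrete by the convergence-action argument. Faithfulness and discreteness of all of $C$ is then a corollary of $A=C$, not the property one inducts on. Your remarks on $d=2$ and $d=3$ are correct and agree with the paper; with the substitution of $A$ for $U$, your outline is the paper's proof.
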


The image of this connected component in the $\PSL(d,\R)$-character variety of $\Gamma_0$ had previously been studied by Hitchin \cite{hit92}, and is now known as the \emph{Hitchin component}.
The corresponding representations are called \emph{Hitchin representations}.

\begin{proof}[Rough sketch of the proofs of Theorem~\ref{thm:Hitchin-comp}]
The proof of Choi--Gold\-man \cite{cg93} for $d=3$ is geometric.
The point is that the group $\tau_3(\PSL(2,\R))\linebreak \simeq \SO(2,1)_0$ preserves a nondegenerate symmetric bilinear form $\langle\cdot,\cdot\rangle_{2,1}$ of signature $(2,1)$ on~$\R^3$; in particular, it preserves the open subset
$$\Omega = \{ [v]\in\PP(\R^3) ~|~ \langle v,v\rangle_{2,1}<0\}$$
of the projective plane $\PP(\R^3)$, which is a model for the hyperbolic plane $\HH^2$ (see \eqref{eqn:proj-model-Hn}).
This set $\Omega$ is \emph{properly convex}: it is convex and bounded in some affine chart of $\PP(\R^3)$ (\eg it is the open unit disk in the affine chart $\{ v_3=1\}$, see Figure~\ref{fig:ConvDiv}, left).
The group $\PSL(2,\R)$ acts properly and transitively on~$\Omega$ via~$\tau_3$, hence $\Gamma_0$ acts properly discontinuously with compact quotient on~$\Omega$ via $\rho_0 = \tau_3\circ\sigma_0$.
By work of Koszul, the set of representations through which $\Gamma_0$ acts properly discontinuously with compact quotient on some nonempty properly convex open subset of $\PP(\R^3)$ is open in $\Hom(\Gamma_0,\PSL(3,\R))$.
Choi and Goldman proved that this set is also closed.
Therefore the entire connected component of~$\rho_0$ consists of such representations, and they are injective and discrete.

\begin{figure}[ht!]
\includegraphics[scale=0.4]{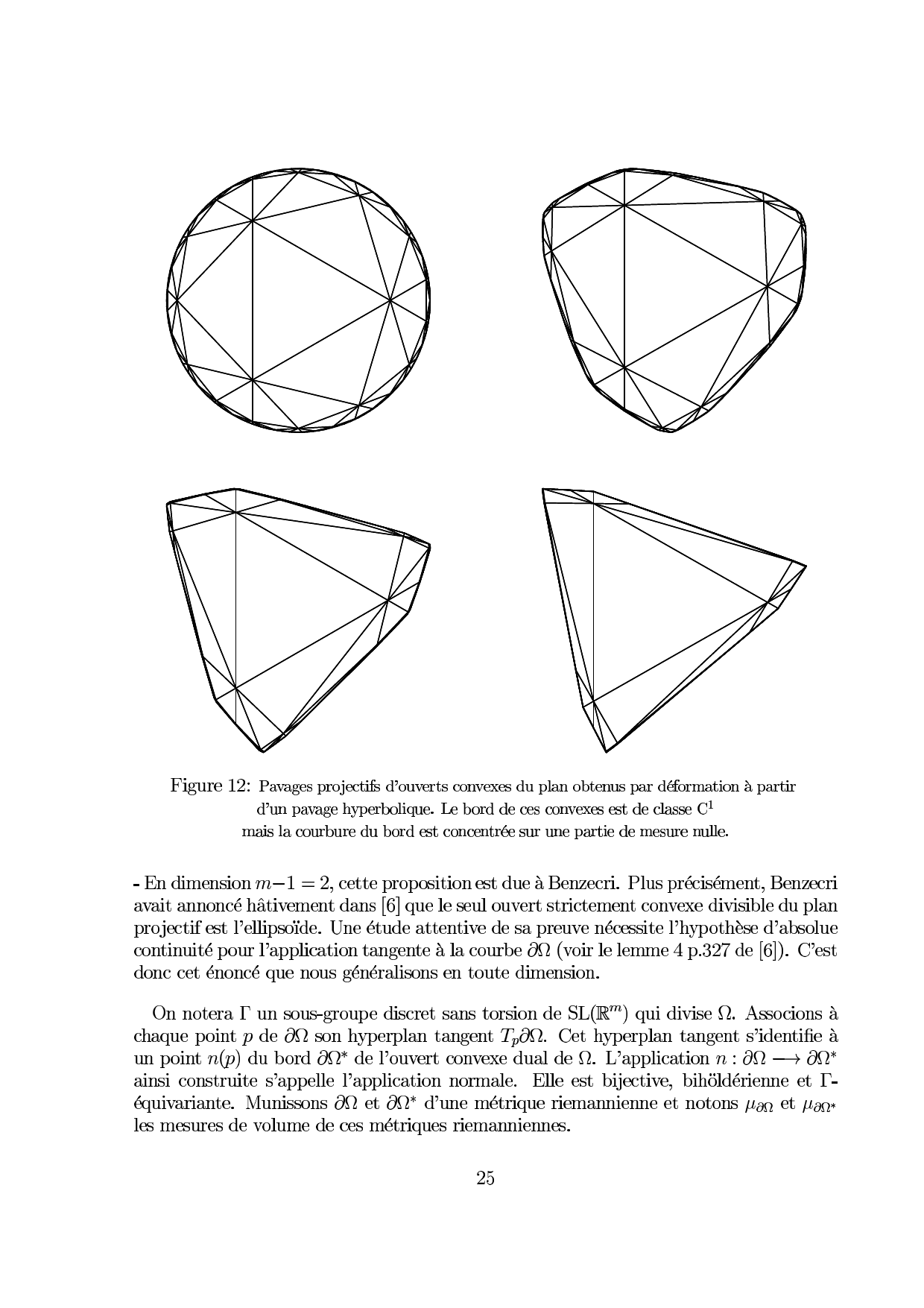}
\includegraphics[scale=0.4]{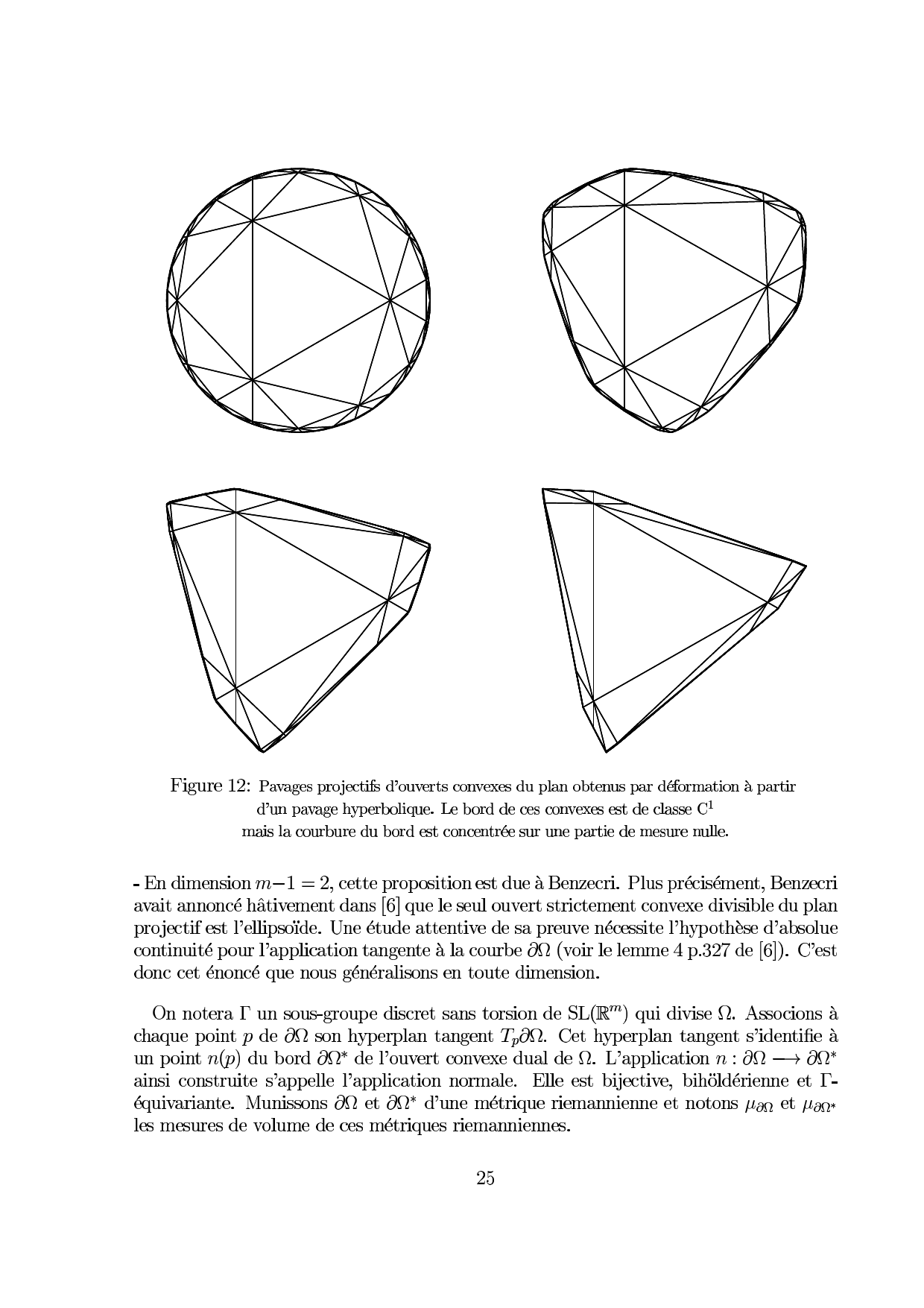}
\includegraphics[scale=0.4]{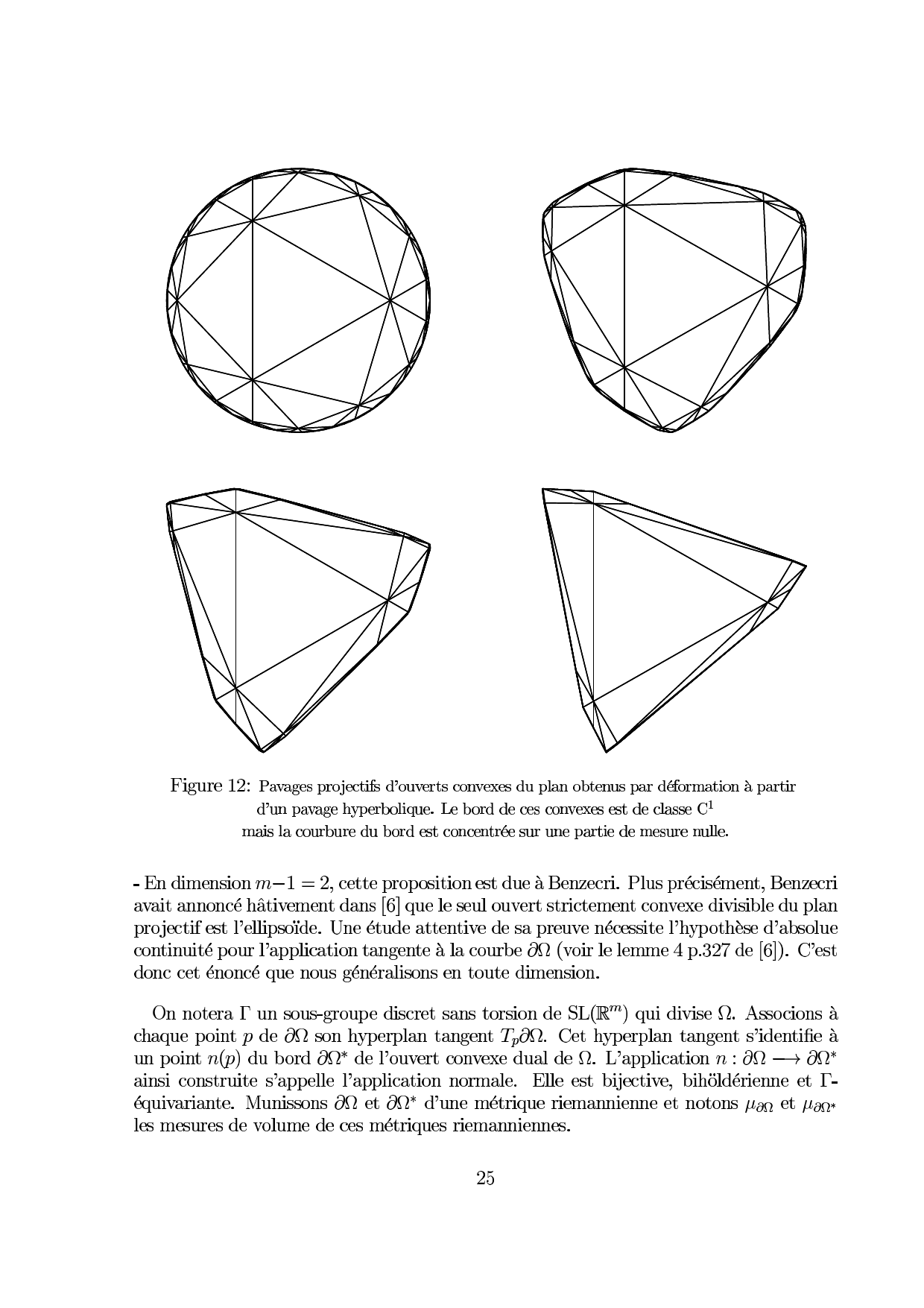}
\includegraphics[scale=0.4]{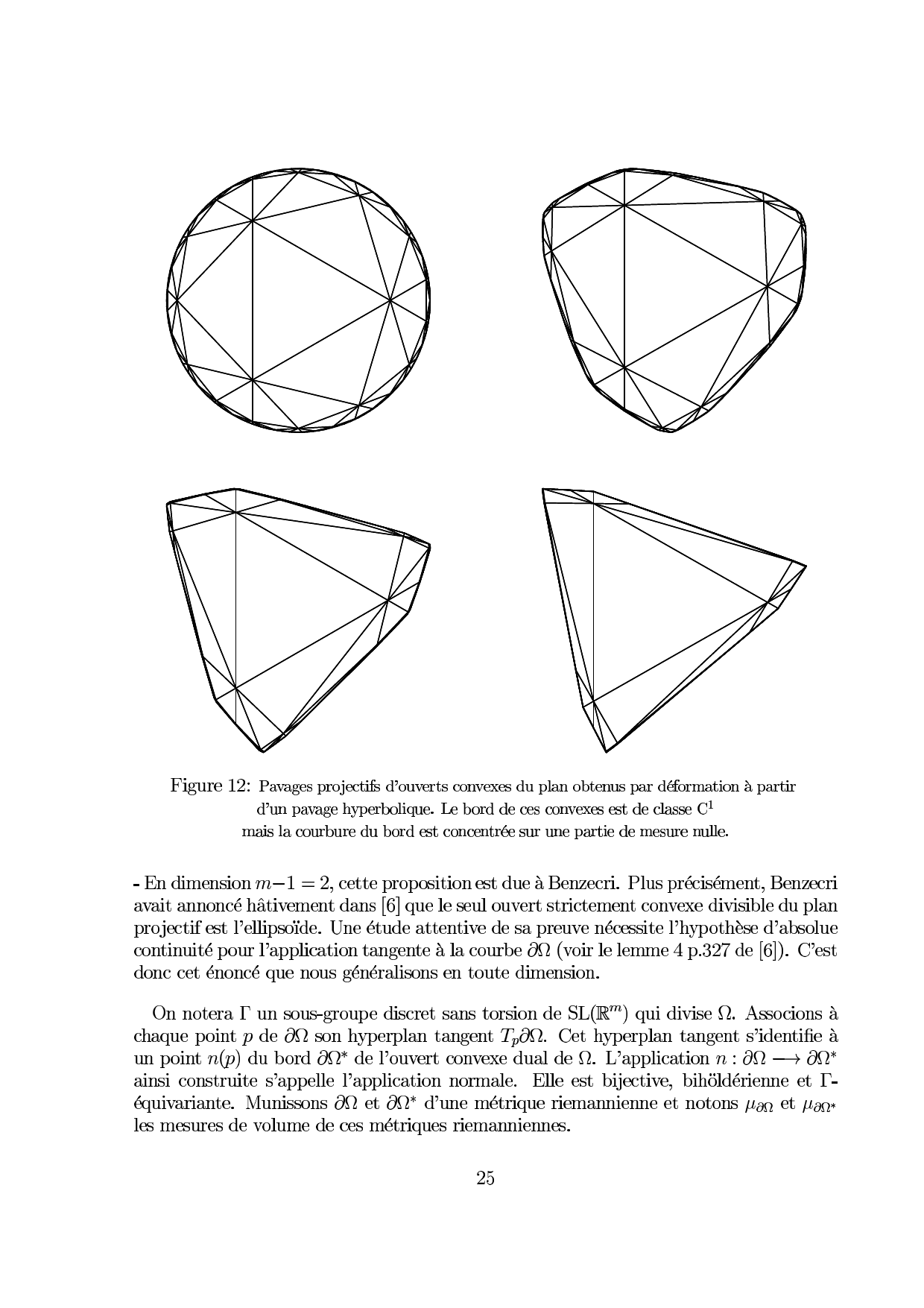}
\caption{The left-most picture shows the projective model of~$\HH^2$ (an open disk in an affine chart of the projective plane $\PP(\R^3)$), tiled by fundamental domains for the action of a triangle group $T$ given by some injective and discrete representation $\rho_0 : T\to\PO(2,1)\subset\PGL(3,\R)$. (Note that $T$ admits a finite-index subgroup which is a closed surface group $\Gamma_0 = \pi_1(S)$ as in the proof of Theorem~\ref{thm:Hitchin-comp}.) The other pictures show the effect of a continuous deformation of $\rho_0$ in $\Hom(T,\PGL(3,\R))$: the open disk deforms into an invariant properly convex open subset of $\PP(\R^3)$ which is not a disk anymore, and the tiling deforms as the action of~$T$ remains properly discontinuous and cocompact. These pictures are taken from \cite{ben04}.}
\label{fig:ConvDiv}
\end{figure}

The proofs of Labourie and Fock--Goncharov for general~$d$ are dynamical.
They involve two key objects.
The first one is the \emph{Gromov boundary} $\di\Gamma_0$ of~$\Gamma_0$: by definition, this is the visual boundary of a proper geodesic metric space on which $\Gamma_0$ acts properly discontinuously, by isometries, with compact quotient; in our situation, $\Gamma_0$ is a closed surface group and $\di\Gamma_0$ is the visual boundary of~$\HH^2$, namely a circle.
The second key object is the space $\mathrm{Flags}(\R^d)$ of full flags $(V_1\subset\dots\subset V_{d-1}\subset\R^d)$ of~$\R^d$ (where each $V_i$ is an $i$-dimensional linear subspace of~$\R^d$); this space $\mathrm{Flags}(\R^d)$ is compact with a transitive action of $G = \PSL(d,\R)$, and may be thought of as a kind of ``boundary'' for $G$ or its symmetric space.
The point of the proof is then to show that for any Hitchin representation $\rho : \Gamma_0\to G$, there exists a continuous, injective, $\rho$-equivariant ``boundary map'' $\xi : \di\Gamma_0\to\mathrm{Flags}(\R^d)$.
See Figure~\ref{fig:Hitchin-bound-map}.
(By \emph{$\rho$-equivariant} we mean that $\xi(\gamma\cdot w) = \rho(\gamma)\cdot\xi(w)$ for all $\gamma\in\Gamma_0$ and all $w\in\di\Gamma_0$.)
The existence of such a boundary map $\xi$ easily implies that $\rho$ is injective and discrete: see the proof of Lemma~\ref{lem:basic-Ano}.\eqref{item:basic-Ano-3}.
Indeed, the idea is that the continuous, injective, equivariant map $\xi$ ``transfers'', to $\mathrm{Flags}(\R^d)$, the dynamics of the intrinsic action of $\Gamma_0$ on $\di\Gamma_0$, which is a so-called \emph{convergence action}: any sequence $(\gamma_k)_{k\in\N}$ of pairwise distinct elements of~$\Gamma_0$ comes with some contraction in $\di\Gamma_0$, hence (using~$\xi$) the sequence $(\rho(\gamma_k))_{k\in\N}$ comes with some contraction in $\mathrm{Flags}(\R^d)$, and this prevents $(\rho(\gamma_k))_{k\in\N}$ from converging to the identity element of~$G$.

We note that the existence of continuous, injective, equivariant boundary maps for Hitchin representations is obtained by an open-and-closed argument, using stronger properties satisfied by these maps (namely, some uniform forms of contraction and transversality for Labourie, and a positivity property for Fock and Goncharov).
\end{proof}

\subsubsection{Maximal representations}

For $n\geq 2$, consider the embedding $\tau : G' = \PSL(2,\R) \simeq \SO(2,1)_0 \hookrightarrow \SO(2,n)$.
Then the entire connected component of $\rho_0$ in $\Hom(\Gamma_0,G)$ consists of injective and discrete representations, as was proved by Burger, Iozzi, and Wienhard \cite{biw10}.
This is an example of a so-called \emph{maximal component}: it consists of representations (called \emph{maximal representations}) that maximise the \emph{Toledo invariant} (a topological invariant generalising the Euler number, see \eg \cite[\S\,5.1]{biw14}).

\subsection{Higher Teichm\"uller theory} \label{subsec:higher-Teich}

We already encountered in Sections \ref{subsec:lattice-deform} and~\ref{subsec:ex-rank-1} the Teichm\"uller space of a closed surface $S$ of genus $\geq 2$.
It is a fundamental object in many areas of mathematics, which can be viewed both as a moduli space for marked complex structures on~$S$ or, via the Uniformisation Theorem, as a moduli space for marked hyperbolic structures on~$S$.
In this second point of view, the holonomy representation of the fundamental group $\Gamma_0 = \pi_1(S)$ naturally realises the Teichm\"uller space of~$S$ as a connected component of the $G$-character variety of $\Gamma_0$ for $G=\PSL(2,\R)$, corresponding to the image, modulo conjugation by~$G$ at the target, of a connected component of $\Hom(\Gamma_0,G)$ consisting entirely of injective and discrete representations.

An interesting and perhaps surprising phenomenon, which has led to a considerable amount of research in the past twenty years, is that for certain semisimple Lie groups $G$ of higher real rank, there also exist connected components of $\Hom(\Gamma_0,G)$ consisting entirely of injective and discrete representations, and which are nontrivial in the sense that they are not reduced to a single representation and its conjugates by~$G$.
The images in the $G$-character variety of these components are now called \emph{higher(-rank) Teichm\"uller spaces}.
We saw two examples in Section~\ref{subsec:deform-Fuchsian-higher-rank}:
\begin{itemize}
  \item \emph{Hitchin components} when $G$ is $\PSL(d,\R)$, or more generally a real split simple Lie group; these are by definition components containing a Fuchsian representation $\rho_0 : \Gamma_0\hookrightarrow\PSL(2,\R)\hookrightarrow G$, where $\PSL(2,\R)\hookrightarrow G$ is the so-called \emph{principal embedding};
  \item \emph{maximal components} when $G$ is $\SO(2,n)$, or more generally a simple Lie group of Hermitian type; these are by definition components of representations that maximise the Toledo invariant.
\end{itemize}
See \cite{biw14,can22,oberwolfach22,poz-bourbaki,wie-icm} for details about these examples.

\subsubsection{Towards a full list of higher Teichm\"uller spaces}

Recently, new higher Teichm\"uller spaces were discovered in \cite{bp,bcggo,glw} when $G$ is $\OO(p,q)$ with $p\neq q$ or an exceptional simple Lie group whose restricted root system is of type $F_4$.
These higher Teichm\"uller spaces consist of so-called \emph{$\Theta$-positive} representations, introduced by Guichard and Wienhard \cite{gw16,gw22}.
Notions of positivity for Hitchin representations and maximal representations had been previously found by Fock--Goncharov \cite{fg06} (based on Lusztig's total positivity \cite{lus94}) and Burger--Iozzi--Wienhard \cite{biw10}; the notion of $\Theta$-positivity encompasses them both.
Together with Hitchin components and maximal components, these new $\Theta$-positive components conjecturally (see \cite{gw16}) form the full list of higher Teichm\"uller spaces.

Without entering into technical details, let us mention briefly the role of Higgs bundles in this conjectural classification.
See \cite{bra24,bgg06,g20} for details.

Let $\Sigma$ be a Riemann surface homeomorphic to~$S$.
By definition, a \emph{$G$-Higgs bundle} over~$\Sigma$ is a pair $(E,\varphi)$ where $E$ is a holomorphic $K_{\C}$-bundle over~$\Sigma$ and $\varphi$ (the \emph{Higgs field}) is a holomorphic section of a certain natural bundle over~$\Sigma$ associated to~$E$.
(Here $K_{\C}$ is the complexification of a maximal compact subgroup $K$ of~$G$.)
The \emph{non-Abelian Hodge correspondence} of Hitchin, Donaldson, Corlette, Simpson, and others (see \cite{ggm09}), gives a homeomorphism between the $G$-character variety of $\pi_1(S)$ and the moduli space $\mathcal{M}_G(\Sigma)$ of so-called \emph{polystable} $G$-Higgs bundles over~$\Sigma$.
This was used by Hitchin to define and study the Hitchin component.

Some of the connected components of the $G$-character variety of $\pi_1(S)$ can be distinguished using topological invariants.
However, such invariants are not sufficient to distinguish them all in general.
One fruitful approach is to use the fact, proved by Hitchin, that $(E,\varphi) \mapsto \Vert\varphi\Vert_{L^2(\Sigma)}^2$ defines a proper Morse function $f$ from $\mathcal{M}_G(\Sigma)$ to $\R_{\geq 0}$; therefore, the connected components of $\mathcal{M}_G(\Sigma)$ can be studied by examining the local minima of~$f$.
The zero locus $f^{-1}(0)$ of~$f$ corresponds, in the $G$-character variety, to representations of $\pi_1(S)$ whose image lies in a compact subgroup of~$G$; in particular, these representations are not injective and discrete, and so connected components for which $f$ has a local minimum of~$0$ cannot be higher Teichm\"uller spaces.
This approach has already been successfully exploited to find and count almost all connected components of the $G$-character variety of $\pi_1(S)$ for simple~$G$, including conjecturally all higher Teichm\"uller spaces: see \cite{bra24,bcggo,bgg06,g20}.

\subsubsection{Similarities with the classical Teichm\"uller space}

The study of higher Teichm\"uller spaces, or \emph{higher Teichm\"uller theory}, has been very active in the past twenty years.
In particular, striking similarities have been found between higher Teichm\"uller spaces and the classical Teichm\"uller space of~$S$, including:
\begin{itemize}
  \item associated notions of positivity (see above);
  \item for Hitchin components: the topology of $\R^{\dim(G) |\chi(S)|}$ (Hitchin);
  \item the proper discontinuity of the action of the mapping class group (Labourie, Wienhard);
  \item good systems of coordinates (Goldman, Fock--Goncharov, Bonahon--Dreyer, Strubel, Zhang);
  \item analytic Riemannian metrics invariant under the mapping class group (Bridgeman--Canary--Labourie--Sambarino, Pollicott--Sharp);
  \item natural maps to the space of geodesic currents on~$S$ (Labourie,\linebreak Bridge\-man--Canary--La\-bourie--Sambarino, Martone--Zhang, Ouyang--Tamburelli);
  \item versions of the collar lemma for the associated locally symmetric spaces (Lee--Zhang, Burger--Pozzetti, Beyrer--Pozzetti, Beyrer--Gui\-chard--Labourie--Pozzetti--Wienhard);
  \item interpretations of higher Teichm\"uller spaces as moduli spaces of geometric structures on~$S$ or on closed manifolds fibering over~$S$ (Choi--Goldman, Guichard--Wienhard, Collier--Tholozan--Toulisse).
\end{itemize}
There are also conjectural interpretations of higher Teichm\"uller spaces as moduli spaces of ``higher complex structures'' on~$S$ (Fock--Thomas), as well as various approaches to see higher Teichm\"uller spaces as mapping-class-group-equivariant fiber bundles over the classical Teichm\"uller space of~$S$ (Labourie, Loftin, Alessandrini--Collier, Collier--Tholozan--Toulisse).
We refer to \cite{biw14,oberwolfach22,poz-bourbaki,wie-icm} for more details and references.

\subsubsection{Higher higher Teichm\"uller spaces}

Phenomena analogous to Theorem~\ref{thm:Hitchin-comp} have also been uncovered for fundamental groups of higher-dimensional manifolds, in two situations.

The first one is in the context of \emph{convex projective geometry}, which is by definition the study of properly convex open subsets $\Omega$ of real projective spaces $\PP(\R^d)$, as in Choi--Goldman's proof of Theorem~\ref{thm:Hitchin-comp} for $d=3$.
Let $\Gamma_0 = \pi_1(M)$ where $M$ is a closed topological manifold of dimension $n\geq 2$.
Generalising Theorem~\ref{thm:Hitchin-comp}, Benoist \cite{ben05} proved that if $\Gamma_0$ does not contain an infinite nilpotent normal subgroup, then the set of representations through which $\Gamma_0$ acts properly discontinuously with compact quotient on some properly convex open subset of $\PP(\R^{n+1})$ is closed in $\Hom(\Gamma_0,G)$ for $G=\PGL(n+1,\R)$.
This set is also open in $\Hom(\Gamma_0,G)$ by Koszul, and so it is a union of connected components of $\Hom(\Gamma_0,G)$.
It consists entirely of injective and discrete representations.
Recent results of Marseglia and Cooper--Tillman extend this to some cases where $M$ and the quotients of the properly convex sets are not necessarily closed (see \cite{ct}).

The second situation is in the context of \emph{pseudo-Riemannian hyperbolic geometry}, which is by definition the study of pseudo-Riemannian manifolds (\ie smooth manifolds with a smooth assignment, to each tangent space, of a nondegenerate quadratic form) which have constant negative sectional curvature.
In signature $(p,q)$, such manifolds are locally modeled on the pseudo-Riemannian symmetric space $\HH^{p,q} = \PO(p,q+1)/\mathrm{P}(\OO(p)\times\OO(q+1))$, which can be realised as an open set in projective space, namely $\{ [v]\in\PP(\R^{p+q+1}) \,|\, \langle v,v\rangle_{p,q+1}<0\}$ where $\langle\cdot,\cdot\rangle_{p,q+1}$ is a symmetric bilinear form of signature $(p,q+1)$ on $\R^{p+q+1}$.
For $q=0$ we recover the real hyperbolic space $\HH^p$, with its projective model \eqref{eqn:proj-model-Hn}, and for $q=1$ the space $\HH^{p,1}$ is the $(p+1)$-dimensional \emph{anti-de Sitter space} (a Lorentzian analogue of the real hyperbolic space).
Let $\Gamma_0 = \pi_1(M)$ where $M$ is a closed hyperbolic $p$-manifold, with holonomy $\sigma_0 : \Gamma_0\to\OO(p,1)$, and let $\tau : \OO(p,1)\hookrightarrow G=\PO(p,q+1)$ be the standard embedding.
For $q=1$, Barbot \cite{bar15} proved that the connected component of $\rho_0 = \tau\circ\sigma_0$ in $\Hom(\Gamma_0,G)$ consists entirely of injective and discrete representations (corresponding to holonomies of so-called \emph{globally hyperbolic spatially compact} anti-de Sitter manifolds, studied in \cite{mes90} for $p=2$). 
This was recently extended in \cite{bk-Hpq-cc} to general $p\geq 2$ and $q\geq 1$.
In fact, the following more general result is proved in \cite{bk-Hpq-cc}: for $\Gamma_0 = \pi_1(M)$ where $M$ is any closed topological manifold of dimension $p\geq 2$, the set of so-called \emph{$\HH^{p,q}$-convex cocompact} representations is a union of connected components in $\Hom(\Gamma_0,G)$.
These $\HH^{p,q}$-convex cocompact representations are injective and discrete representations with a nice geometric behaviour in $\HH^{p,q}$ (see Section~\ref{subsec:Ano-cc}); they include the representations $\tau\circ\sigma_0 : \Gamma_0 = \pi_1(M)\to\OO(p,1)\hookrightarrow G=\PO(p,q+1)$ above where $M$ is a closed hyperbolic manifold, but also other examples where $M$ can be quite ``exotic'' (see \cite{lm19,mst} for $q=1$).
These representations can have Zariski-dense image in~$G$: see \eg \cite{bk-Hpq-cc} for a bending argument as in Section~\ref{subsec:ex-rank-1}.

In these two situations, there are connected components in $\Hom(\Gamma_0,G)$ consisting entirely of injective and discrete representations, where $\Gamma_0$ is the fundamental group of an $n$-dimensional closed manifold with $n>2$ and $G$ is a semisimple Lie group with $\Rrank(G)\geq 2$.
It is natural to call \emph{higher-dimensional higher-rank Teichm\"uller spaces} (or \emph{higher higher Teichm\"uller spaces} for short) the images of these components in the $G$-character variety of $\Gamma_0$.
It would be interesting in the future to investigate whether these higher higher Teichm\"uller spaces have any topological or geometric analogies with classical Teichm\"uller space or its higher-rank counterparts, as above.
See also \cite[\S\,14]{wie-icm} for some further discussion.

\section{Classes of discrete subgroups in real rank one} \label{sec:rank-1}

In Section~\ref{sec:flex-examples} we saw various examples of ``flexible'' infinite discrete subgroups of semisimple Lie groups.
We now present some general theory in which these examples fit, first in real rank one (this section), then in higher real rank (Section~\ref{sec:higher-rank}).

More precisely, throughout this section we consider a semisimple Lie group $G$ with $\Rrank(G) = 1$.
We discuss two important classes of finitely generated discrete subgroups of~$G$ that have received considerable attention, namely convex cocompact subgroups and geometrically finite subgroups.
The inclusion relations between these classes and lattices of~$G$ are shown in Figure~\ref{fig:rank-1}.

\begin{figure}[h!]
\includegraphics[scale=0.8]{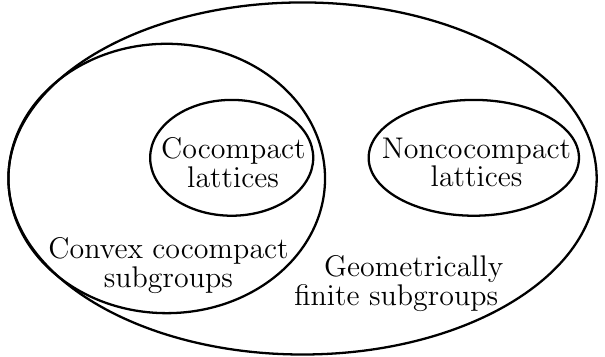}
\caption{Inclusions between four important classes of discrete subgroups of~$G$ for $\Rrank(G) = 1$}
\label{fig:rank-1}
\end{figure}

\subsection{Definitions}

Consider, as in Section~\ref{subsec:G/K}, the Riemannian symmetric space $X = G/K$, where $K$ is a maximal compact subgroup of~$G$.
If $G = \SO(n,1)$ (\resp $\SU(n,1)$, \resp $\Sp(n,1)$), then $X$ is the $n$-dimensional hyperbolic space over $\R$ (\resp $\C$, \resp the quaternions).
If $G$ is the exceptional group $F_{4(-20)}$, then $X$ is the ``hyperbolic plane over the octonions''.

There is a natural notion of convexity in~$X$: any two points $x,y$ of~$X$ are joined by a unique geodesic segment; we say that a subset $\mathcal{C}$ of~$X$ is convex if this segment is contained in~$\mathcal{C}$ for all $x,y\in X$.
See \cite[\S\,1.6]{ebe96} for more details.
For any $\varepsilon>0$ and any subset $\mathcal{C}$ of~$X$, we denote by $\mathcal{U}_{\varepsilon}(\mathcal{C})$ the uniform $\varepsilon$-neighbourhood of $\mathcal{C}$ in~$X$.

\begin{defn} \label{def:cc-gf-rank-1}
Suppose $\Rrank(G)=1$.
A discrete subgroup $\Gamma$ of~$G$ is \emph{convex cocompact} (\resp \emph{geometrically finite}) if it is finitely generated and there is a nonempty $\Gamma$-invariant convex subset $\mathcal{C}$ of~$X$ such that the quotient $\Gamma\backslash\mathcal{C}$ is compact (\resp the quotient $\Gamma\backslash\mathcal{U}_{\varepsilon}(\mathcal{C})$ has finite volume for some $\varepsilon>0$).

Alternatively (see Remark~\ref{rem:two-points-of-view}), given a group $\Gamma_0$, we say that a representation $\rho : \Gamma_0\to G$ is \emph{convex cocompact} (\resp \emph{geometrically finite}) if it has finite kernel and discrete, convex cocompact (\resp geometrically finite) image.
\end{defn}

\begin{rem} \label{rem:cc-gf-finitely-gen}
If there is a nonempty $\Gamma$-invariant convex subset $\mathcal{C}$ of~$X$ such that $\Gamma\backslash\mathcal{C}$ is compact, then $\Gamma$ is automatically finitely generated, by the \v{S}varc--Milnor lemma (see \eg \cite[Th.\,8.37]{dk18}).
Thus the assumption that $\Gamma$ be finitely generated can be omitted in the definition of convex cocompactness.
On the other hand, this assumption cannot be omitted in general in the definition of geometric finiteness: see \cite{ham98}.
\end{rem}

\begin{rem}
Bowditch \cite{bow95} gave several equivalent definitions of geometric finiteness.
Here we use a variation on his definition F5, where the uniform bound on the orders of finite subgroups of~$\Gamma$ is replaced by the assumption that $\Gamma$ be finitely generated.
The two definitions are equivalent by \cite{bow95} and the Selberg lemma \cite[Lem.\,8]{sel60}.
\end{rem}

We now explain how Definition~\ref{def:cc-gf-rank-1} can be rephrased in terms of a specific convex set in $G/K$.
For this, we first recall the important notion of the \emph{limit set} of a discrete subgroup of~$G$.

\subsubsection{Limit sets and convex cores}

Let $\di X$ be the visual boundary of $X = G/K$, \ie the set of equivalence classes of geodesic rays in~$X$ for the equivalence relation ``to remain at bounded distance''.
There is a natural topology on $\overline{X} := X \sqcup \di X$ that extends that of~$X$ and makes $\overline{X}$ compact, and the action of $G$ on~$X$ extends continuously to~$\overline{X}$ (see \eg \cite[\S\,1.7]{ebe96}).
For instance, as in Sections \ref{subsec:ex-rank-1} and~\ref{subsec:higher-Teich}, if $G = \PO(n,1)$ and $X = \HH^n$, then we can realise $X$ as the open subset \eqref{eqn:proj-model-Hn} of $\PP(\R^{n+1})$ where some quadratic form of signature $(n,1)$ is negative, and $\overline{X}$ is then the closed subset of $\PP(\R^{n+1})$ where the quadratic form is nonpositive, endowed with the topology from $\PP(\R^{n+1})$ and the natural action of $G=\PO(n,1)$.

\begin{defn} \label{def:lim-set-rank-1}
Let $\Gamma$ be a discrete subgroup of~$G$.
The \emph{limit set} of~$\Gamma$ is the set $\Lambda_{\Gamma}$ of accumulation points in~$\overline{X}$ of a $\Gamma$-orbit of~$X$; it is contained in $\di X$ and does not depend on the choice of $\Gamma$-orbit.
The \emph{convex core} $\Ccore_{\Gamma} \subset X$ of~$\Gamma$ is the convex hull of $\Lambda_{\Gamma}$ in~$X$ (\ie the smallest closed convex subset of~$X$ whose closure in~$\overline{X}$ contains~$\Lambda_{\Gamma}$).
\end{defn}

Note that $\Lambda_{\Gamma}$ and $\Ccore_{\Gamma}$ are both invariant under the action of $\Gamma$ on~$\overline{X}$.

The limit set $\Lambda_{\Gamma}$ is nonempty if and only if $\Gamma$ is infinite.
This set has either at most two elements (in which case we say $\Gamma$ is \emph{elementary}), or infinitely many.
If $\Gamma$ is not elementary, then the action of $\Gamma$ on $\Lambda_{\Gamma}$ is minimal (all orbits are dense), and any nonempty $\Gamma$-invariant closed subset of $\di X$ contains~$\Lambda_{\Gamma}$ (see \eg \cite[\S\,3.2]{bow95}); in particular, any nonempty $\Gamma$-invariant closed convex subset of~$X$ contains the convex core~$\Ccore_{\Gamma}$.
We deduce~the~following.

\begin{fact} \label{fact:cc-gf-rk-1-Ccore}
Suppose $\Rrank(G)=1$.
A finitely generated infinite discrete subgroup $\Gamma$ of~$G$ is convex cocompact (\resp geometrically finite) if and only if the quotient $\Gamma\backslash\Ccore_{\Gamma}$ is compact and nonempty (\resp the quotient $\Gamma\backslash\mathcal{U}_{\varepsilon}(\Ccore_{\Gamma})$ has finite volume for some $\varepsilon>0$).
\end{fact}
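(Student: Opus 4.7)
The plan is to deduce Fact~\ref{fact:cc-gf-rk-1-Ccore} from the italicised key property stated just above it: for non-elementary $\Gamma$, every nonempty $\Gamma$-invariant closed convex subset of $X$ contains the convex core $\Ccore_\Gamma$ (a consequence of the minimality of the $\Gamma$-action on $\Lambda_\Gamma$, applied to the closure in $\overline{X}$ of any $\Gamma$-orbit contained in such a set).

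The ``if'' direction of either equivalence is immediate: taking $\mathcal{C}:=\Ccore_\Gamma$ in Definition~\ref{def:cc-gf-rank-1} realises convex cocompactness (resp.\ geometric finiteness) from the stated quotient hypothesis on $\Ccore_\Gamma$ itself. For the ``only if'' direction, suppose $\Gamma$ is convex cocompact (resp.\ geometrically finite), and let $\mathcal{C}$ be a nonempty $\Gamma$-invariant closed convex subset of $X$ witnessing this. In the main non-elementary case, the key property gives $\Ccore_\Gamma\subseteq\mathcal{C}$, and $\Ccore_\Gamma$ is nonempty since any two distinct points of $\Lambda_\Gamma$ span a geodesic line contained in it. Being closed and $\Gamma$-invariant in $\mathcal{C}$, the quotient $\Gamma\backslash\Ccore_\Gamma$ embeds as a closed, nonempty subspace of the compact $\Gamma\backslash\mathcal{C}$, hence is itself compact nonempty; likewise the inclusion $\mathcal{U}_\varepsilon(\Ccore_\Gamma)\subseteq\mathcal{U}_\varepsilon(\mathcal{C})$ transfers the finiteness of the volume to $\Gamma\backslash\mathcal{U}_\varepsilon(\Ccore_\Gamma)$.

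It remains to handle the two elementary infinite cases. If $|\Lambda_\Gamma|=2$, then $\Gamma$ is virtually cyclic, generated up to finite index by a loxodromic element whose axis coincides with $\Ccore_\Gamma$ and on which $\Gamma$ acts cocompactly by translations, so $\Gamma\backslash\Ccore_\Gamma$ is compact nonempty and both equivalences hold. If $|\Lambda_\Gamma|=1$ (parabolic), then $\Ccore_\Gamma=\emptyset$, since the intersection of all horoballs at the fixed point is empty; here $\Gamma$ cannot be convex cocompact because it has no nonempty bounded invariant subset of $X$ (such a set would have a circumcentre fixed by $\Gamma$, contradicting the absence of fixed points in $X$), consistently with $\Gamma\backslash\Ccore_\Gamma=\emptyset$, while $\Gamma$ is geometrically finite (witnessed by any horoball at the fixed point) and $\Gamma\backslash\mathcal{U}_\varepsilon(\emptyset)=\emptyset$ trivially has finite volume. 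I expect this parabolic edge case to be the main bookkeeping obstacle, since both sides of the geometric finiteness equivalence degenerate simultaneously; beyond this, the argument is essentially routine once the italicised key property is taken as input.
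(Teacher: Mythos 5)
Your core argument — deducing both equivalences, for non-elementary $\Gamma$, from the italicised property that every nonempty $\Gamma$-invariant closed convex subset of~$X$ contains $\Ccore_\Gamma$ — is exactly the route the paper takes (the paper states the Fact as an immediate consequence of that property, with "we deduce the following", and does not spell out the elementary cases). That part is correct, up to the harmless point that the witness $\mathcal{C}$ of Definition~\ref{def:cc-gf-rank-1} should be replaced by its closure before the italicised property is applied; your treatment of the case $|\Lambda_\Gamma|=2$ via the axis is also fine.

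The parabolic case $|\Lambda_\Gamma|=1$, however, contains two genuine flaws. First, to show $\Gamma$ is not convex cocompact you only rule out a nonempty \emph{bounded} invariant set via the circumcentre; but Definition~\ref{def:cc-gf-rank-1} asks for an invariant convex $\mathcal{C}$ with \emph{compact quotient}, which may well be unbounded (your own $|\Lambda_\Gamma|=2$ case is the model: the axis is unbounded with compact quotient). To close this, observe that such a $\mathcal{C}$ lies in a bounded neighbourhood of a single orbit, so its ideal boundary is $\{\xi\}$, and being unbounded, closed and convex it contains a geodesic ray tending to~$\xi$; since every element of $\Gamma$ is elliptic or parabolic fixing~$\xi$, the Busemann function $b_\xi$ is $\Gamma$-invariant, hence descends to a continuous function on the compact quotient $\Gamma\backslash\mathcal{C}$ and is therefore bounded on~$\mathcal{C}$ --- contradicting $b_\xi\to-\infty$ along the ray. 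Second, a horoball at~$\xi$ does \emph{not} witness geometric finiteness in general: for a rank-one parabolic group (\eg $\Z$ acting parabolically on $\HH^3$) the quotient of an $\varepsilon$-neighbourhood of a horoball has infinite volume. The correct witness is, \eg, the convex hull of a $\Gamma$-orbit, whose $\varepsilon$-neighbourhood does have finite-volume quotient for a finitely generated discrete parabolic group. This is not cosmetic: since $\Ccore_\Gamma=\emptyset$ here, the right-hand side of the geometric-finiteness equivalence is vacuously true, so the "if" direction requires you to actually exhibit a nonempty invariant convex set satisfying Definition~\ref{def:cc-gf-rank-1}, and the one you propose does not work.
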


\begin{rem} \label{rem:bound-G/K-rank-1}
In our setting where $\Rrank(G) = 1$, the group $G$ acts transitively on $\di X$.
The stabilisers in~$G$ of points of $\di X$ are the \emph{proper parabolic subgroups} of~$G$.
Thus $\di X$ is $G$-equivariantly homeomorphic to $G/P$ where $P$ is a proper parabolic subgroup of~$G$.
\end{rem}

\subsection{Properties} \label{subsec:cc-open-rank-1}

Let us briefly mention a few useful properties of geometrically finite and convex cocompact representations.

\subsubsection{Domains of discontinuity}

We first observe that any discrete subgroup $\Gamma$ of~$G$ acts properly discontinuously on the open subset $\Omega_{\Gamma} := \di X \smallsetminus \Lambda_{\Gamma}$ of $\di X$, and in fact on $X \cup \Omega_{\Gamma}$.
Indeed, let $\mathcal{C}$ be a nonempty $\Gamma$-invariant closed convex subset of~$X$.
One can check that the closest point projection from $X$ to $\mathcal{C}$ extends to a continuous $\Gamma$-equivariant map from $X \cup \Omega_{\Gamma}$ to $\mathcal{C}$.
The fact that $\Gamma$ acts properly discontinuously on $\mathcal{C} \subset X$ then implies that $\Gamma$ acts properly discontinuously on $X \cup \Omega_{\Gamma}$.

If $\Gamma$ is convex cocompact, then the quotient $\Gamma\backslash\Omega_{\Gamma}$ is compact (possibly empty), and $\Gamma\backslash (X\cup\Omega_{\Gamma})$ is a compactification of $\Gamma\backslash X$.

If $\Gamma$ is geometrically finite, then $\Gamma\backslash (X\cup\Omega_{\Gamma})$ is not necessarily compact, but it has only finitely many topological ends, each of which is a ``pa\-rabolic end''; this actually characterises geometric finiteness: see~\cite{bow95}.

\subsubsection{Deformations}

Convex cocompactness is stable under small deformations:

\begin{fact} \label{fact:cc-open-rank-1}
Suppose $\Rrank(G)=1$.
For any finitely generated group~$\Gamma_0$, the space of convex cocompact representations is open in $\Hom(\Gamma_0,G)$.
\end{fact}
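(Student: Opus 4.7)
The plan is to exploit that in real rank one the symmetric space $X = G/K$ is Gromov hyperbolic, and that convex cocompactness of $\rho(\Gamma_0)$ is equivalent to the orbit map $\gamma \mapsto \rho(\gamma) \cdot x_0$ being a quasi-isometric embedding of $\Gamma_0$ (with a word metric) into $X$. The point is then that this quasi-isometric embedding condition is robust under small deformations of $\rho_0$, thanks to the Morse-type stability of quasi-geodesics in Gromov hyperbolic spaces.

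Fix a finite symmetric generating set $F$ of $\Gamma_0$ with word norm $|\cdot|_F$, a nonempty $\rho_0(\Gamma_0)$-invariant convex subset $\mathcal{C}_0 \subset X$ with $\rho_0(\Gamma_0)\backslash\mathcal{C}_0$ compact, and a basepoint $x_0 \in \mathcal{C}_0$. By the \v{S}varc--Milnor lemma (cf.\ Remark~\ref{rem:cc-gf-finitely-gen}), $\Gamma_0$ is finitely generated and the orbit map $\gamma \mapsto \rho_0(\gamma) \cdot x_0$ induces a quasi-isometry from $\Gamma_0/\ker\rho_0$ onto $\mathcal{C}_0$, so there exist constants $A_0, B_0 > 0$ with
\[
  d_X(x_0, \rho_0(\gamma) \cdot x_0) \;\geq\; A_0 \, |\gamma|_F - B_0 \qquad \text{for all } \gamma \in \Gamma_0,
\]
while the upper bound $d_X(x_0, \rho_0(\gamma) \cdot x_0) \leq R_0 |\gamma|_F$ holds trivially with $R_0 := \max_{s \in F} d_X(x_0, \rho_0(s) \cdot x_0)$. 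Moreover $\mathcal{C}_0$, being convex in the Gromov hyperbolic space~$X$, is itself hyperbolic, and so is $\Gamma_0$.

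The heart of the proof is to show that this lower bound persists uniformly in a neighbourhood $U$ of $\rho_0$ in $\Hom(\Gamma_0, G)$: there exist $A, B > 0$ such that
\[
  d_X(x_0, \rho(\gamma) \cdot x_0) \;\geq\; A \, |\gamma|_F - B \qquad \text{for all } \rho \in U \text{ and } \gamma \in \Gamma_0.
\]
For $\gamma = s_1 \cdots s_n$ written as a geodesic word in $F$ (so $n = |\gamma|_F$), consider the piecewise-geodesic path $P_\rho$ in $X$ through $x_0, \rho(s_1) \cdot x_0, \ldots, \rho(\gamma) \cdot x_0$; its edges have length at most $R_\rho := \max_{s \in F} d_X(x_0, \rho(s) \cdot x_0)$, which depends continuously on~$\rho$. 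For $\rho$ sufficiently close to $\rho_0$, any sub-path of combinatorial length at most $L$ of $P_\rho$ remains pointwise uniformly close to the corresponding sub-path of $P_{\rho_0}$, and hence is a $(\lambda,c)$-quasi-geodesic with the same constants as $P_{\rho_0}$. Choosing $L$ large in terms of the hyperbolicity constant of $X$ and of $(A_0, B_0, R_0)$, the local-to-global principle for quasi-geodesics in $\delta$-hyperbolic spaces then ensures that $P_\rho$ is globally a quasi-geodesic with uniform constants, yielding the claimed lower bound.

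From the uniform quasi-isometric embedding one deduces, by now-standard arguments, that $\ker\rho$ has bounded word length hence is finite, that $\rho(\Gamma_0)$ is discrete in~$G$, that the orbit map extends to a continuous $\rho$-equivariant embedding $\di\Gamma_0 \hookrightarrow \di X$ with image the limit set $\Lambda_\rho$, and that $\Ccore_\rho$ lies at finite Hausdorff distance from the orbit $\rho(\Gamma_0)\cdot x_0$ (each point of $\Ccore_\rho$ lies on a geodesic between two limit points, themselves endpoints of quasi-geodesic rays issuing from $x_0$, and the Morse lemma keeps such a geodesic uniformly close to those rays). Since $\rho(\Gamma_0)$ acts cocompactly on the orbit, $\rho(\Gamma_0)\backslash\Ccore_\rho$ is compact, and $\rho$ is convex cocompact by Fact~\ref{fact:cc-gf-rk-1-Ccore}. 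The main obstacle is the stability step above: it rests crucially on the local-to-global principle for quasi-geodesics, which is a hallmark of Gromov hyperbolicity and fails in higher real rank---this is precisely why the analogous openness statement in higher rank will demand the stronger Anosov-type framework developed in later sections.
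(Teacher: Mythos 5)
The paper states this Fact without proof (it is quoted as a known result of Thurston, Marden, Sullivan, et al.), so there is no in-text argument to compare against; the closest material is Theorem~\ref{thm:charact-cc-rank-1}, whose equivalence \eqref{item:rk1-cc}~$\Leftrightarrow$~\eqref{item:rk1-qi} is exactly the characterisation your proof is built on. Your argument is the standard one and is correct: convex cocompactness is equivalent to the orbit map being a quasi-isometric embedding, and that condition is open because quasi-geodesity of the broken orbit path can be verified locally (on subwords of bounded length $L$, which involve only finitely many group elements and hence vary continuously with~$\rho$) and then promoted to a global statement by the local-to-global principle for quasi-geodesics in $\delta$-hyperbolic spaces. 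The order of quantifiers works out because the local quasi-geodesic constants you need are inherited from $(A_0,B_0,R_0)$ and do not depend on~$L$, so $L$ can be chosen afterwards and the neighbourhood $U$ last. The concluding step (uniform quasi-isometric embedding $\Rightarrow$ convex cocompact) is the implication \eqref{item:rk1-qi}~$\Rightarrow$~\eqref{item:rk1-cc} proved in Section~\ref{subsec:charact-cc-rank-1} via the Morse lemma and \cite[Prop.\,2.5.4]{bow95}; you could shorten your version by taking $\mathcal{C}$ to be the convex hull of the orbit rather than routing through the limit set and Fact~\ref{fact:cc-gf-rk-1-Ccore}, which also avoids worrying about elementary~$\Gamma_0$.

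One imprecision worth fixing: a sub-path of $P_\rho$ starting at $\rho(s_1\cdots s_i)\cdot x_0$ for large~$i$ is \emph{not} pointwise close to the corresponding sub-path of $P_{\rho_0}$, since the basepoints $\rho(s_1\cdots s_i)\cdot x_0$ and $\rho_0(s_1\cdots s_i)\cdot x_0$ drift apart as $i$ grows. What is true, and what you actually need, is that after translating by the isometry $\rho(s_1\cdots s_i)^{-1}$ the sub-path becomes the path through the points $\rho(s_{i+1}\cdots s_{i+j})\cdot x_0$, $0\leq j\leq L$, which for $\rho$ in a neighbourhood depending only on~$L$ is uniformly close to the corresponding $\rho_0$-path; since quasi-geodesity is invariant under isometries of~$X$, this suffices. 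With that rephrasing the proof is complete.
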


On the other hand, geometric finiteness is in general not stable under small deformations.
If one restricts to small deformations that are \emph{cusp-preserving} (\ie that keep parabolic elements parabolic), then stability holds for $G=\PO(n,1)$ when $n\leq 3$ or when all cusps have rank $\geq n-2$, but not in general.
See \eg \cite[App.\,B]{gk17} for more details and references.

\subsubsection{Homomorphisms}

Convex cocompactness behaves well under Lie group homomorphisms:

\begin{fact} \label{fact:cc-embed-rank-1}
Suppose $\Rrank(G)=1$.
Let $G'$ be another semisimple Lie group with $\Rrank(G')=1$ and let $\tau : G'\to G$ be a Lie group homomorphism with compact kernel.
For any finitely generated group~$\Gamma_0$ and any representation $\sigma_0 : \Gamma_0\to G'$, the composed representation $\tau\circ\sigma_0 : \Gamma_0\to G$ is convex cocompact if and only if $\sigma_0$ is.
\end{fact}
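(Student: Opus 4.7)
The plan is to apply Fact~\ref{fact:cc-gf-rk-1-Ccore} (convex cocompactness is equivalent to compactness of the quotient of the convex core) on both sides, and to transfer this condition via a totally geodesic equivariant embedding of symmetric spaces induced by~$\tau$.

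First I would construct the embedding. Choose a maximal compact subgroup $K' \subset G'$ containing the compact normal subgroup $\ker(\tau)$, then choose a maximal compact subgroup $K \subset G$ containing $\tau(K')$. Since $\tau(G') = G'/\ker(\tau)$ is a closed subgroup of~$G$ and $\tau^{-1}(K)$ is compact (being an extension of the compact $\tau(G')\cap K$ by $\ker(\tau)$), the maximality of~$K'$ forces $\tau^{-1}(K) = K'$. Consequently, the map $\iota : X' = G'/K' \to X = G/K$, $gK' \mapsto \tau(g)K$, is a well-defined $\tau$-equivariant injection whose image $\tau(G')/(\tau(G') \cap K)$ is a totally geodesic closed convex subspace of~$X$, and $\iota$ is a homeomorphism onto its image. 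Since $\iota$ sends geodesic rays isometrically to geodesic rays, it extends continuously to an injection $\bar\iota : \overline{X'} \hookrightarrow \overline{X}$.

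Next, I would transfer discreteness, finite-kernel conditions, and limit sets. Setting $\rho := \tau \circ \sigma_0$, the intersection $\sigma_0(\Gamma_0) \cap \ker(\tau)$ is discrete in the compact group $\ker(\tau)$, hence finite. Writing $\sigma_0(\Gamma_0)$ as a disjoint union of $\ker(\tau)$-translates indexed by $\rho(\Gamma_0)$ shows that $\sigma_0(\Gamma_0)$ is discrete in~$G'$ if and only if $\rho(\Gamma_0)$ is discrete in~$G$; the relation $|\ker(\rho)| = |\ker(\sigma_0)| \cdot |\sigma_0(\Gamma_0) \cap \ker(\tau)|$ then shows that $\ker(\sigma_0)$ is finite if and only if $\ker(\rho)$ is. By $\tau$-equivariance of~$\iota$, computing limit sets from a base point in $\iota(X')$ yields $\Lambda_{\rho(\Gamma_0)} = \bar\iota(\Lambda_{\sigma_0(\Gamma_0)})$, and since $\iota(X')$ is convex in~$X$, convex hulls match: $\Ccore_{\rho(\Gamma_0)} = \iota(\Ccore_{\sigma_0(\Gamma_0)})$.

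Finally, since $\iota$ restricted to $\Ccore_{\sigma_0(\Gamma_0)}$ is a $\tau$-equivariant homeomorphism onto $\Ccore_{\rho(\Gamma_0)}$ and the action of $\sigma_0(\Gamma_0)$ factors through $\rho(\Gamma_0)$ modulo the finite subgroup $\sigma_0(\Gamma_0) \cap \ker(\tau)$, it descends to a bijection of quotients $\sigma_0(\Gamma_0)\backslash\Ccore_{\sigma_0(\Gamma_0)} \to \rho(\Gamma_0)\backslash\Ccore_{\rho(\Gamma_0)}$, which is a homeomorphism because the projections are open quotient maps and $\iota$ is a homeomorphism. Fact~\ref{fact:cc-gf-rk-1-Ccore} applied on both sides then yields the equivalence of convex cocompactness for $\sigma_0$ and~$\rho$. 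The main subtlety is the careful bookkeeping around the finite obstruction $\sigma_0(\Gamma_0) \cap \ker(\tau)$: it does not affect compactness of the relevant quotients, but one must verify that discreteness genuinely transfers in both directions and that the induced map on quotients of convex cores is actually bijective, not merely surjective.
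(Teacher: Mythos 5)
The paper states this as a Fact without proof, so I can only assess your argument on its own terms. Your overall scheme --- build a $\tau$-equivariant embedding $\iota : X' \to X$ of symmetric spaces, match limit sets and convex cores, and apply Fact~\ref{fact:cc-gf-rk-1-Ccore} on both sides --- is the right one, but there is a genuine gap at its central step: the claim that $\iota(X') = \tau(G')\cdot x_0$ is a totally geodesic convex subspace of~$X$ does not follow from your construction. Choosing $K$ to be \emph{any} maximal compact subgroup of~$G$ containing $\tau(K')$ only guarantees that the basepoint $x_0 = eK$ is fixed by $\tau(K')$, and the fixed-point set of $\tau(K')$ in~$X$ is in general strictly larger than the set of basepoints whose $\tau(G')$-orbit is totally geodesic. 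Concretely, take $G' = \PSL(2,\R) \subset G = \PSL(2,\C)$, so $X = \HH^3$: the fixed-point set of $\tau(K') = \PSO(2)$ is a geodesic $\ell$ meeting the invariant copy of $\HH^2$ orthogonally in a single point, every point of~$\ell$ yields an admissible~$K$, and for $x_0 \in \ell \smallsetminus \HH^2$ the orbit $\PSL(2,\R)\cdot x_0$ is an equidistant (umbilic) surface: it is neither convex nor totally geodesic, its intrinsic geodesics are not geodesics of~$\HH^3$, and your identity $\Ccore_{\rho(\Gamma_0)} = \iota(\Ccore_{\sigma_0(\Gamma_0)})$ fails (for $\sigma_0$ cocompact Fuchsian, the left-hand side is the totally geodesic $\HH^2$ while the right-hand side is the equidistant surface). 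What is needed here is the Karpelevich--Mostow theorem: one can choose a Cartan involution $\theta$ of~$G$ preserving $\tau(G')$ and restricting to a Cartan involution of it, and then for $K = G^{\theta}$ the orbit is indeed a totally geodesic copy of~$X'$. That is a real theorem, not a consequence of your maximality argument, and it (or a substitute such as the undistortion of $\tau(G')$ in~$G$, fed into characterisation~\eqref{item:rk1-qi} of Theorem~\ref{thm:charact-cc-rank-1}) must be invoked. Alternatively, since any two orbits $\tau(G')\cdot x$ lie at bounded Hausdorff distance from one another, you could keep your basepoint and replace the exact convex-core identity by coarse statements, but as written the exact identity is false.

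Two smaller points. First, the finiteness of $\sigma_0(\Gamma_0)\cap\ker\tau$ is asserted before either discreteness hypothesis is in force; it is false in general (e.g.\ $G' = \SL(2,\R)\times\SU(2)$ with $\sigma_0$ having dense image in the compact factor), and in each direction of the equivalence it must be derived from the hypothesis actually available in that direction (discreteness of $\sigma_0(\Gamma_0)$ in one direction, finiteness of $\ker(\tau\circ\sigma_0)$ in the other). Second, the closedness of $\tau(G')$ in~$G$ --- which you use to get properness of $\tau$ and hence the transfer of discreteness --- deserves justification: it holds because a connected Lie subgroup with semisimple Lie algebra is closed. With these repairs, your argument goes through.
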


\subsection{Examples} \label{subsec:ex-rk1-cc-gf}

\begin{itemize}
  \item If $\Gamma$ is a lattice in~$G$, then $\Lambda_{\Gamma} = \di X$ and $\Ccore_{\Gamma} = X$, and $\Gamma$ is geometrically finite.
  If $\Gamma$ is cocompact in~$G$, then it is convex cocompact.
  \item Suppose $G = \PSL(2,\R) \simeq \PO(2,1)_0$.
  Then every finitely generated discrete subgroup $\Gamma$ of~$G$ is geometrically finite; $\Gamma$ is convex cocompact if and only if the associated hyperbolic surface $\Gamma\backslash\HH^2$ has no cusps.
  
  \begin{rem}
  On the other hand, for $G = \PO(n,1)$ with $n\geq\nolinebreak 3$, there exist finitely generated discrete subgroups of~$G$ which are \emph{not} geometrically finite.
  The first examples were given by Bers for $n=3$ (``singly degenerate'' Kleinian groups, for which the domain of discontinuity $\Omega_{\Gamma}$ is simply connected): see \cite[\S\,2]{kap07}.
  \end{rem}
  \item Any discrete subgroup of $G = \PO(n,1)$ generated by the orthogonal reflections in the faces of a finite-sided right-angled polyhedron of~$\HH^n$ is geometrically finite; it is convex cocompact if and only if no distinct facets of the polyhedron have closures meeting in $\di\HH^n$ (see \cite[\S\,4]{dh13}).
  \item The Schottky groups of Section~\ref{subsec:ex-rank-1} are geometrically finite; the strong Schottky groups (for which $B_1^{\pm},\dots,B_m^{\pm}$ have pairwise disjoint closures) are convex cocompact.
  Their limit sets are Cantor sets.
  The set $\Omega$ of Remark~\ref{rem:DoD-rank-one} is the domain of discontinuity $\Omega_{\Gamma} = \di X \smallsetminus \Lambda_{\Gamma}$ of $\Gamma$ in $\di X$ from Section~\ref{subsec:cc-open-rank-1}.
  \item Any quasi-Fuchsian group $\Gamma = \rho(\pi_1(S))$ as in Section~\ref{subsec:ex-rank-1} is convex cocompact.
  The limit set $\Lambda_{\Gamma}$ is a topological circle in $\di\HH^3$ (see Figure~\ref{fig:QFLimSet}).
  The quotient $\Gamma\backslash\Ccore_{\Gamma}$ is homeomorphic to $S\times [0,1]$.
  \item The small deformations of cocompact lattices of $G'=\SO(n,1)$ inside $G=\SO(n+1,1)$ from Section~\ref{subsec:ex-rank-1} are convex cocompact by Facts \ref{fact:cc-open-rank-1} and~\ref{fact:cc-embed-rank-1} (see also Remark~\ref{rem:bending}.\eqref{item:bending-2}).
\end{itemize}

\subsection{A few characterisations of convex cocompactness} \label{subsec:charact-cc-rank-1}

\subsubsection{Preliminaries}

Given a finitely generated group $\Gamma_0$, we choose a finite generating subset $F$ of~$\Gamma_0$ and denote by $\mathrm{Cay}(\Gamma_0) = \mathrm{Cay}(\Gamma_0,F)$ the corresponding Cayley graph, with its metric $\mathtt{d}_{\mathrm{Cay}(\Gamma_0)}$.

As in Section~\ref{subsec:rank}, a group $\Gamma_0$ is called \emph{Gromov hyperbolic} if it is finitely generated and acts properly discontinuously, by isometries, with compact quotient, on some Gromov hyperbolic proper geodesic metric space~$Y$; in that case, we can take $Y$ to be $\mathrm{Cay}(\Gamma_0)$.
As in the proof of Theorem~\ref{thm:Hitchin-comp}, the \emph{Gromov boundary} $\di\Gamma_0$ of~$\Gamma_0$ is then the visual boundary of~$Y$, endowed with the action of~$\Gamma_0$ extending that on~$Y$.
The Gromov boundary $\di\Gamma_0$ does not depend on~$Y$ up to $\Gamma_0$-equivariant homeomorphism.
An important property is that the action of $\Gamma_0$ on $\di\Gamma_0$ is a \emph{convergence action}: for any sequence $(\gamma_k)_{k\in\N}$ of pairwise distinct elements of~$\Gamma_0$, up to passing to a subsequence, there exist $w^+,w^-\in\di\Gamma_0$ such that $\gamma_k\cdot w\to w^+$ for all $w\in\di\Gamma_0\smallsetminus\{w^-\}$, uniformly on compact sets.
Moreover, any infinite-order element of~$\Gamma_0$ has two fixed points in $\di\Gamma_0$, one attracting and one repelling.
The group $\Gamma_0$ is called \emph{elementary} if it is finite (in which case $\di\Gamma_0$ is empty) or if it admits a finite-index subgroup which is cyclic (in which case $\di\Gamma_0$ consists of two points).
If $\Gamma_0$ is not elementary, then the set of attracting fixed points of infinite-order elements of~$\Gamma_0$ is infinite and dense in $\di\Gamma_0$.
See \eg \cite{bk02} for details.

\begin{examples} \label{ex:Gromov-hyp-groups}
If $\Gamma_0$ is a nonabelian free group with finite free generating subset~$F$, then $\Gamma_0$ is Gromov hyperbolic, $\mathrm{Cay}(\Gamma_0)$ is a tree, and $\di\Gamma_0$ is a Cantor set.
If $\Gamma_0 = \pi_1(M)$ for some closed negatively-curved manifold~$M$, then $\Gamma_0$ is Gromov hyperbolic, we can take $Y$ to be the universal cover $\widetilde{M}$ of~$M$, and $\di\Gamma_0 = \di\widetilde{M}$.
In particular, if $\Gamma_0 = \pi_1(S)$ for some closed orientable surface of genus $\geq 2$, then $\Gamma_0$ is Gromov hyperbolic and $\di\Gamma_0$ is a circle (as in the proof of Theorem~\ref{thm:Hitchin-comp}).
\end{examples}

\begin{rem} \label{rem:hyp-gp-no-Z2}
A Gromov hyperbolic group can never contain a subgroup isomorphic to $\Z^2$ or to a Baumslag--Solitar group $BS(m,n) := \langle a,t \,|\, t^{-1} a^m t = a^n\rangle$.
Understanding how close this is to characterising Gromov hyperbolic groups is an important question in geometric group theory: see \eg \cite{gkl}.
\end{rem}

For any isometry $g$ of a metric space $(M,\mathtt{d}_M)$, we define the \emph{translation length} of $g$ in~$M$ to be
\begin{equation} \label{eqn:transl-length}
\mathrm{transl}_M(g) := \inf_{m\in M} \mathtt{d}_M(m,g\cdot m) \geq 0.
\end{equation}

Finally, we denote by $\mathtt{d}_X$ the metric on the Riemannian symmetric space $X=G/K$ (see Section~\ref{subsec:G/K}).
We fix a basepoint $x_0\in X$, and a Riemannian metric $\mathtt{d}_{\di X}$ on the visual boundary $\di X$.

\subsubsection{A few classical characterisations}

Many interesting characterisations of convex cocompactness have been found by various authors including Beardon, Bowditch, Maskit, Sullivan, Thurston, Tukia, and others.
We now give a few.
We refer to \cite{kap07,kl-msri} for more details and references, as well as further characterisations (\eg in terms of \emph{conical limit points}).
We also refer to \cite{bow95,kap07} for characterisations of geometric finiteness.

\begin{thm} \label{thm:charact-cc-rank-1}
Suppose $\Rrank(G)=1$.
For any infinite group $\Gamma_0$ and any representation $\rho : \Gamma_0\to G$, the following are equivalent:
\begin{enumerate}[(1)]
  \item\label{item:rk1-cc} $\rho$ is convex cocompact (Definition~\ref{def:cc-gf-rank-1});
  \item\label{item:rk1-qi} $\Gamma_0$ is finitely generated and $\rho$ is a \emph{quasi-isometric embedding}: there exist $c,c'>0$ such that for any $\gamma\in\Gamma$,
  \begin{equation} \label{eqn:qi-embed}
  \mathtt{d}_X(x_0,\rho(\gamma)\cdot x_0) \geq c\,\mathtt{d}_{\mathrm{Cay}(\Gamma_0)}(e,\gamma) - c' ;
  \end{equation}
  \item\label{item:rk1-displac} $\Gamma_0$ is Gromov hyperbolic and $\rho$ is \emph{well-displacing}: there exist $c,c''>\nolinebreak 0$ such that for any $\gamma\in\Gamma$,
  \begin{equation} \label{eqn:displac}
  \mathrm{transl}_X(\rho(\gamma)) \geq c\,\mathrm{transl}_{\mathrm{Cay}(\Gamma_0)}(\gamma) - c'' ;
  \end{equation}
  \item\label{item:rk1-bound-map-dyn-preserv} $\Gamma_0$ is Gromov hyperbolic and there exists a $\rho$-equivariant map
  $$\xi : \di\Gamma_0 \longrightarrow \di X$$
  which is continuous, injective, and \emph{dynamics-preserving} (\ie for any infinite-order element $\gamma\in\Gamma_0$, the image by~$\xi$ of the attracting fixed point of $\gamma$ in $\di\Gamma_0$ is an attracting fixed point of $\rho(\gamma)$ in $\di X$);
  \smallskip
  \item\label{item:rk1-bound-map-strong-dyn-preserv} $\Gamma_0$ is Gromov hyperbolic and there exists a $\rho$-equivariant map
  $$\xi : \di\Gamma_0 \longrightarrow \di X$$
  which is continuous, injective, and \emph{strongly dynamics-preserving} (\ie for any $(\gamma_k)\in\Gamma_0^{\N}$ and any $w^+,w^-\in\di\Gamma_0$, if $\gamma_k\cdot w\to w^+$ for all $w\in\di\Gamma_0\smallsetminus\{w^-\}$, then $\rho(\gamma_k)\cdot z\to\xi(w^+)$ for all $z\in\di X\smallsetminus\{\xi(w^-)\}$);
  \smallskip
  \item\label{item:rk1-expand} $\rho$ has finite kernel, discrete image, and the action of $\Gamma$ on $\di X$ via~$\rho$ is \emph{expanding} at $\Lambda_{\rho(\Gamma_0)}$, \ie for any $z\in\Lambda_{\rho(\Gamma_0)}$, there exist a neighbourhood $\mathcal{U}$ of $z$ in $\di X$ and an element $\gamma\in\Gamma_0$ such that
  \begin{equation} \label{eqn:expand}
  \inf_{z_1\neq z_2\ \mathrm{in}\ \mathcal{U}} \, \frac{\mathtt{d}_{\di X}(\rho(\gamma)\cdot z_1,\rho(\gamma)\cdot z_2)}{\mathtt{d}_{\di X}(z_1,z_2)} > 1.
  \end{equation}
\end{enumerate}
\end{thm}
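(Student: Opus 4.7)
The plan is to exploit the Gromov-hyperbolicity of the symmetric space $X = G/K$ (since $\Rrank(G) = 1$), and to prove the six characterisations are equivalent via a cycle $(1)\Rightarrow(2)\Rightarrow(4)\Rightarrow(5)\Rightarrow(6)\Rightarrow(1)$, supplemented by the sub-equivalence $(2)\Leftrightarrow(3)$. In this rank-one setting, the coarse geometry of $\Gamma_0$, the action on $X$, and the dynamics on $\di X$ are all linked by the Gromov-hyperbolic framework, and each of the six conditions expresses this linkage in a slightly different language.

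For $(1)\Leftrightarrow(2)$, I would use the \v{S}varc--Milnor lemma and its converse: from~(1), the cocompact isometric action of $\rho(\Gamma_0)$ on $\Ccore_{\rho(\Gamma_0)}$ makes the orbit map $\gamma\mapsto\rho(\gamma)\cdot x_0$ a quasi-isometry, yielding~\eqref{eqn:qi-embed}; conversely, (2) forces $\Gamma_0$ to be Gromov hyperbolic by QI-invariance of hyperbolicity, and the orbit $\rho(\Gamma_0)\cdot x_0$ is then quasi-convex in $X$, hence at bounded Hausdorff distance from its convex hull, on which $\rho(\Gamma_0)$ acts cocompactly. For $(2)\Rightarrow(4)$, a standard boundary-extension result for quasi-isometric embeddings between Gromov-hyperbolic spaces produces the required continuous injective $\rho$-equivariant map $\xi:\di\Gamma_0\to\di X$, and dynamics-preservation holds because $\xi$ sends the two endpoints of the axis of $\gamma$ in $\mathrm{Cay}(\Gamma_0)$ to the two endpoints of the axis of $\rho(\gamma)$ in $X$. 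The upgrade $(4)\Rightarrow(5)$ is a density/continuity argument: the attracting fixed points of infinite-order elements are dense in $\di\Gamma_0$, $\xi$ is continuous and injective, and the convergence action of $\Gamma_0$ on $\di\Gamma_0$ propagates the pointwise dynamical statement to the full strong dynamics-preserving one.

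The sub-equivalence $(2)\Leftrightarrow(3)$ relies on the fact that in a Gromov-hyperbolic metric space the translation length of a hyperbolic isometry $g$ coincides, up to bounded error, with the stable quantity $\lim_n \tfrac{1}{n}\,\mathtt{d}(x_0, g^n\cdot x_0)$, both in $X$ and in $\mathrm{Cay}(\Gamma_0)$. Applying~\eqref{eqn:qi-embed} to powers $\gamma^n$, dividing by $n$, and taking limits yields~\eqref{eqn:displac}; conversely, the classical fact that in a Gromov-hyperbolic group long words are controlled by the stable translation lengths of the infinite-order elements they contain lets one upgrade well-displacement to a quasi-isometric embedding.

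For $(5)\Rightarrow(6)$, any $z\in\Lambda_{\rho(\Gamma_0)}$ equals $\xi(w)$ for the attracting fixed point $w\in\di\Gamma_0$ of some infinite-order element $\gamma\in\Gamma_0$ (such $w$ being dense in $\di\Gamma_0$), and strong dynamics-preservation transfers the North--South dynamics of $\gamma$ on $\di\Gamma_0$ to North--South dynamics of $\rho(\gamma)$ on $\di X$ with attracting fixed point $z$, so some high power of $\rho(\gamma)^{-1}$ is expanding on a neighbourhood of $z$. The main obstacle is the closing implication $(6)\Rightarrow(1)$, which is Sullivan's theorem: from the pointwise expansion on the compact limit set, a Lebesgue-number argument extracts a finite collection of group elements and open neighbourhoods realising a uniform expansion rate $\lambda > 1$ near $\Lambda_{\rho(\Gamma_0)}$; the dual uniform contraction of the corresponding inverse elements then allows one to push every point of $\Ccore_{\rho(\Gamma_0)}$ by $\Gamma_0$ into a fixed compact region of $X$, which is precisely cocompactness of the convex core.
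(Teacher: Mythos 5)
Your overall architecture is sound and genuinely different from the paper's: you close the cycle through $(5)\Rightarrow(6)\Rightarrow(1)$, which lets you avoid the implication $(4)\Rightarrow(2)$ that the paper only obtains by deferring to the flow/Anosov machinery of Section~\ref{subsec:Anosov}, and you produce the boundary map from $(2)$ by the boundary-extension property of quasi-isometric embeddings rather than from $(1)$ directly. The steps $(1)\Leftrightarrow(2)$ (\v{S}varc--Milnor, Morse lemma, quasiconvexity of the orbit) and $(2)\Leftrightarrow(3)$ (stable lengths, powers, and the finite-set trick for the converse) coincide with the paper's arguments.

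The genuine gap is in $(4)\Rightarrow(5)$. The strongly dynamics-preserving condition asserts $\rho(\gamma_k)\cdot z\to\xi(w^+)$ for \emph{every} $z\in\di X\smallsetminus\{\xi(w^-)\}$, and $\di X$ is much larger than the image $\xi(\di\Gamma_0)$. Density of attracting fixed points in $\di\Gamma_0$, continuity and injectivity of $\xi$, and the convergence action of $\Gamma_0$ on $\di\Gamma_0$ only control $\rho(\gamma_k)$ on the compact set $\xi(\di\Gamma_0)$; they say nothing about points of $\di X$ outside it. The missing ingredient is that $\rho(\Gamma_0)$ --- which is discrete with finite kernel by the argument of Lemma~\ref{lem:basic-Ano}.\eqref{item:basic-Ano-3}, valid verbatim here --- acts on $\di X$ as a convergence group; this is exactly where the paper invokes the Cartan decomposition $\rho(\gamma_k)=\kappa_k a_k\kappa_k'$, compactness of the stabiliser of a point of~$X$, and the North--South dynamics of the translations $a_k$ to upgrade convergence on the limit set to convergence on all of $\di X\smallsetminus\{\xi(w^-)\}$. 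Without some such argument the ``propagation'' you describe does not go through. A secondary slip occurs in $(5)\Rightarrow(6)$: a general $z\in\Lambda_{\rho(\Gamma_0)}$ is a limit of images of attracting fixed points but need not be one itself, so you cannot choose $\gamma$ with attracting fixed point mapping to~$z$; instead one picks any infinite-order $\gamma$ whose attracting fixed point has image distinct from~$z$ and uses that high powers of $\rho(\gamma)$ are uniformly expanding on regions exhausting the complement of any neighbourhood of that attracting point. Both defects are repairable, but as written the chain from $(4)$ to $(1)$ is not complete.
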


\begin{rems} \label{rem:charact-cc-rank-1}
\begin{itemize}
  \item Using the triangle inequality, one sees that condition~\eqref{item:rk1-qi} does not depend on the choice of basepoint $x_0\in X$ (changing $x_0$ may change the values of $c,c'$ but not their existence).
  \item One also sees that for $\Gamma_0$ with finite generating subset $F$, the reverse inequality $\mathtt{d}_X(x_0,\rho(\gamma)\cdot x_0) \leq C\,\mathtt{d}_{\mathrm{Cay}(\Gamma_0)}(e,\gamma)$ to \eqref{eqn:qi-embed} holds for any representation $\rho : \Gamma_0\to G$, with $C := \max_{f\in F} \mathtt{d}_X(x_0,\rho(f)\cdot x_0)$.
  \item In condition~\eqref{item:rk1-displac} we cannot remove the assumption that $\Gamma_0$ be Gromov hyperbolic: for instance, there exist finitely generated infinite groups $\Gamma_0$ with only finitely many conjugacy classes \cite{osi10}, and for such~$\Gamma_0$ any representation $\rho : \Gamma_0\to G$ is well-displacing.
  \item In condition~\eqref{item:rk1-bound-map-dyn-preserv}, \emph{dynamics-preserving} implies that for any $\gamma\in\Gamma_0$ of infinite order, $\rho(\gamma)$ is a \emph{hyperbolic} element of~$G$ (\ie an element with two fixed points in $\di X$, one attracting and one repelling).
  In condition~\eqref{item:rk1-bound-map-strong-dyn-preserv}, \emph{strongly dynamics-preserving} means that $\xi$ preserves the convergence action of $\Gamma_0$ on $\di\Gamma_0$ mentioned above.
\end{itemize}
\end{rems}

\subsubsection{Sketches of proofs}

\begin{proof}[Proof of \eqref{item:rk1-cc}~$\Rightarrow$~\eqref{item:rk1-qi}:]
We may assume that the basepoint $x_0$ belongs to the convex core $\Ccore_{\rho(\Gamma_0)}$.
By the \v{S}varc--Milnor lemma (see \eg \cite[Th.\,8.37]{dk18}), if $\Gamma_0$ acts properly discontinuously, by isometries, with compact quotient, on a proper geodesic metric space $M$, then $\Gamma_0$ is finitely generated and any orbital map $\gamma\mapsto\gamma\cdot m$ is a quasi-isometric embedding: there exist $c,c'>0$ such that $\mathtt{d}_M(m,\gamma\cdot m) \geq c\,\mathtt{d}_{\mathrm{Cay}(\Gamma_0)}(e,\gamma) - c'$ for all $\gamma\in\Gamma_0$.
We apply this to the convex core $M = \Ccore_{\rho(\Gamma_0)}$, endowed with the restriction of the metric~$\mathtt{d}_X$.
\end{proof}

\begin{proof}[Proof of \eqref{item:rk1-qi}~$\Rightarrow$~\eqref{item:rk1-cc}:]
Since $\mathtt{d}_X(x_0,\rho(\gamma)\cdot x_0) \to +\infty$ as $\mathtt{d}_{\mathrm{Cay}(\Gamma_0)}(e,\gamma)\to +\infty$, the representation $\rho$ has finite kernel and discrete image.

The orbital map $\gamma\mapsto\rho(\gamma)\cdot x_0$ from $\Gamma_0$ to~$X$ extends to a map from $\mathrm{Cay}(\Gamma_0)$ to~$X$ sending edges of $\mathrm{Cay}(\Gamma_0)$ to geodesic segments of~$X$.
The fact that $\rho$ is a quasi-isometric embedding implies the existence of $c,c'>\nolinebreak 0$ such that any geodesic of $\mathrm{Cay}(\Gamma_0)$ is sent to a $(c,c')$-quasigeodesic in~$X$, and the Morse lemma (see \eg \cite[Th.\,11.40 \& 11.105]{dk18}) states that $(c,c')$-quasigeodesics are uniformly close to actual geodesics in~$X$.
Therefore the orbit $\rho(\Gamma_0)\cdot x_0$ is \emph{quasiconvex}: there exists a uniform neighbourhood $\mathcal{U}$ of $\rho(\Gamma_0)\cdot x_0$ in~$X$ such that any geodesic segment between two points of $\rho(\Gamma_0)\cdot x_0$ is contained in~$\mathcal{U}$.
We conclude using the fact (see \cite[Prop.\,2.5.4]{bow95}) that any quasiconvex subset of~$X$ lies at finite Hausdorff distance from its convex hull in~$X$.
\end{proof}

In order to prove \eqref{item:rk1-qi}~$\Rightarrow$~\eqref{item:rk1-displac}, we consider, for any metric space $(M,\mathtt{d}_M)$ and any isometry $g$ of~$M$, the \emph{stable length}
$$\mathrm{length}^{\infty}_M(g) := \lim_k \frac{1}{k} \, \mathtt{d}_M(m,g^k\cdot m) \geq 0$$
of~$g$.
It is an easy exercise to check, using the triangle inequality, that this limit exists (because the sequence $(d_M(m,g^k\cdot m))_{k\in\N}$ is subadditive) and that it does not depend on the choice of $m\in M$.
Note that
\begin{equation} \label{eqn:transl-stable-length}
\mathrm{length}^{\infty}_M(g) \leq \mathrm{transl}_M(g).
\end{equation}
Indeed, for any $m\in M$ and any $k\geq 1$ we have $\mathtt{d}_M(m,g^k\cdot m) \leq k\,\mathtt{d}_M(m,g\cdot m)$ by the triangle inequality.
Dividing by~$k$ and passing to the limit yields $\mathrm{length}^{\infty}_M(g) \leq \mathtt{d}_M(m,g\cdot m)$, and we conclude by taking an infimum over all $m\in M$ on the right-hand side.

\begin{proof}[Proof of \eqref{item:rk1-qi}~$\Rightarrow$~\eqref{item:rk1-displac}:]
Applying \eqref{eqn:qi-embed} to $\gamma^k$ instead of~$\gamma$, dividing by~$k$, and passing to the limit yields $\mathrm{length}^{\infty}_X(\rho(\gamma)) \geq c\,\mathrm{length}^{\infty}_{\mathrm{Cay}(\Gamma_0)}(\gamma)$ for all $\gamma\in\Gamma$.
In order to obtain \eqref{eqn:displac}, it is sufficient to use \eqref{eqn:transl-stable-length} for $M = X$ and to check that
\begin{enumerate}[(i)]
  \item\label{item:transl-stable-length-Cay} for $M = \mathrm{Cay}(\Gamma_0)$, the inequality \eqref{eqn:transl-stable-length} is ``almost'' an equality: $\mathrm{length}^{\infty}_{\mathrm{Cay}(\Gamma_0)}(g) \geq \mathrm{transl}_{\mathrm{Cay}(\Gamma_0)}(g) - 8\delta$ where $\delta\geq 0$ is a hyperbolicity constant for $\mathrm{Cay}(\Gamma_0)$ (\ie all triangles of $\mathrm{Cay}(\Gamma_0)$ are $\delta$-thin).
\end{enumerate}
Indeed, then \eqref{eqn:displac} will hold with $c'' = 8\delta$.
We note that actually
\begin{enumerate}[(i)] \setcounter{enumi}{1}
  \item\label{item:transl-stable-length-X} for $M = X = G/K$, the inequality \eqref{eqn:transl-stable-length} is an equality.
\end{enumerate}

Indeed, \eqref{item:transl-stable-length-X} is based on the fact that $X$ is a \emph{CAT(0) space}: any geodesic triangle of~$X$ is ``at least as thin'' as a triangle with the same side lengths in the Euclidean plane.
Applying this to a geodesic triangle with vertices $m,g\cdot m,g^2\cdot m$, we see that if $m'$ is the midpoint of the geodesic segment $[m,g\cdot m]$ (so that $g\cdot m'$ is the midpoint of $[g\cdot m,g^2\cdot m]$), then\linebreak $\mathtt{d}_X(m',g\cdot m') \leq \mathtt{d}_X(m,g^2\cdot m)/2$.
By induction on~$k$, we obtain that for any $m\in M$ and any $k\geq 1$, there exists $m_k\in M$ such that $\mathtt{d}_X(m,g^{2^k}\cdot m) \geq 2^k\,\mathtt{d}_X(m_k,g\cdot m_k) \geq 2^k\,\mathrm{transl}_M(g)$.
We conclude by dividing by~$2^k$ and passing to the limit.

\eqref{item:transl-stable-length-Cay} can be proved in a similar way, replacing the CAT(0) inequality $\mathtt{d}_X(m',g\cdot m') \leq \mathtt{d}_X(m,g^2\cdot m)/2$ by the Gromov hyperbolicity inequality $\mathtt{d}_X(m',g\cdot m') \leq \mathtt{d}_X(m,g^2\cdot m)/2 + 4\delta$ (see \cite[Ch.\,10, Prop.\,5.1]{cdp90}).
\end{proof}

\begin{proof}[Proof of \eqref{item:rk1-displac}~$\Rightarrow$~\eqref{item:rk1-qi}:]
The Gromov hyperbolic group $\Gamma_0$ has the following property: there exist a finite subset $S$ of~$\Gamma_0$ and a constant $C'>0$ such that for any $\gamma\in\Gamma_0$ we can find $s\in S$ with $\mathrm{transl}_{\mathrm{Cay}(\Gamma_0)}(s\gamma) \geq \mathtt{d}_{\mathrm{Cay}(\Gamma_0)}(e,\gamma) - C'$.
(If $\Gamma_0$ is nonelementary, then we can take $S = \{ \gamma_1^N,\gamma_1^{-N},\gamma_2^N,\gamma_2^{-N}\}$ for some large~$N$, where $\gamma_1,\gamma_2\in\Gamma_0$ are infinite-order elements such that the attracting fixed points in $\di\Gamma_0$ of $\gamma_1$, $\gamma_1^{-1}$, $\gamma_2$, and $\gamma_2^{-1}$ are pairwise distinct: see \eg \cite[Lem.\,B.2]{zz-rel-Ano-1}.)

Given $\gamma\in\Gamma_0$, consider $s\in S$ as above.
Applying \eqref{eqn:displac} to $s\gamma$ yields
$$\mathrm{transl}_X(\rho(s\gamma)) \geq c\,\mathrm{transl}_{\mathrm{Cay}(\Gamma_0)}(s\gamma) - c'' \geq c \, \mathtt{d}_{\mathrm{Cay}(\Gamma_0)}(e,\gamma) - (c\,C' + c'').$$
To conclude, we observe that
$$\mathrm{transl}_X(g_1g_2) \leq \mathtt{d}_X(x_0,g_1g_2\cdot x_0) \leq \mathtt{d}_X(x_0,g_1\cdot x_0) + \mathtt{d}_X(x_0,g_2\cdot x_0)$$
for all $g_1,g_2\in G$.
Applying this to $(g_1,g_2) = (\rho(s),\rho(\gamma))$, we obtain \eqref{eqn:qi-embed} with $c' = c\,C' + c'' + \max_{s'\in S} \, \mathtt{d}_X(x_0,\rho(s')\cdot x_0)$.
\end{proof}

\begin{proof}[Proof of \eqref{item:rk1-cc}~$\Rightarrow$~\eqref{item:rk1-bound-map-strong-dyn-preserv}:]
We have seen in the proof of \eqref{item:rk1-cc}~$\Rightarrow$~\eqref{item:rk1-qi} that for any $m\in\Ccore_{\rho(\Gamma_0)}$, the orbital map $\gamma \mapsto \rho(\gamma)\cdot m$ is a quasi-isometry from $\Gamma_0$ to $\Ccore_{\rho(\Gamma_0)}$.
It is a classical result in geometric group theory (see \eg \cite[Th.\,11.108]{dk18}) that such a quasi-isometry extends to a $\Gamma_0$-equivariant homeomorphism $\xi$ from $\di\Gamma_0$ to $\di\Ccore_{\rho(\Gamma_0)}$.
Here $\di\Ccore_{\rho(\Gamma_0)}$ is a subset of $\di X$ (namely the intersection of $\di X$ with the closure of $\Ccore_{\rho(\Gamma_0)}$ in~$\overline{X}$).
Thus we can view $\xi$ as a $\rho$-equivariant, continuous, injective map from $\di\Gamma_0$ to $\di X$, such that for any $(\gamma_k)\in\Gamma_0^{\N}$ and any $w^+,w^-\in\di\Gamma_0$, if $\gamma_k\cdot w\to w^+$ for all $w\in\di\Gamma_0\smallsetminus\{w^-\}$, then $\rho(\gamma_k)\cdot z\to\xi(w^+)$ for all $z\in\di\Ccore_{\rho(\Gamma_0)}\smallsetminus\{\xi(w^-)\}$.

In order to see that this last convergence holds for all $z\in\di X\smallsetminus\{\xi(w^-)\}$, one possibility is to use the fact (Cartan decomposition) that if we choose a point $x\in X$ and a geodesic line $\mathcal{G}$ of $X$ through~$x$ with endpoints $z_0^+,z_0^-\in\di X$, then any element $\rho(\gamma_k)\in G$ can be written as $\rho(\gamma_k) = \kappa_k a_k \kappa'_k$ where $\kappa_k,\kappa'_k\in G$ fix~$x$ and $a_k\in G$ is a pure translation along~$\mathcal{G}$ towards~$z_0^+$.
The subgroup of~$G$ fixing~$x$ is compact (it is conjugate to~$K$); therefore, up to passing to a subsequence we may assume that $(\kappa_k)_{k\in\N},(\kappa'_k)_{k\in\N}$ converge respectively to some $\kappa,\kappa'\in G$.
Since $\rho$ has finite kernel and discrete image, we have $a_k\cdot z\to z_0^+$ for all $z\in\di X\smallsetminus\{z_0^-\}$.
Therefore $\rho(\gamma_k)\cdot z\to\kappa\cdot z_0^+$ for all $z\in\di X\smallsetminus\{{\kappa'}^{-1}\cdot z_0^-\}$.
Necessarily $\kappa\cdot z_0^+ = \xi(w^+)$ and ${\kappa'}^{-1}\cdot z_0^- = \xi(w^-)$.
\end{proof}

The implication \eqref{item:rk1-bound-map-strong-dyn-preserv}~$\Rightarrow$~\eqref{item:rk1-bound-map-dyn-preserv} is immediate by considering, for any infinite-order element $\gamma\in\Gamma_0$, the sequence $(\gamma_k) := (\gamma^k) \in \Gamma_0^{\N}$.
The implication \eqref{item:rk1-bound-map-dyn-preserv}~$\Rightarrow$~\eqref{item:rk1-qi} can be proved using flows as in Section~\ref{subsec:Anosov} below (see Remark~\ref{rem:Ano-Zariski-dense} and the implication \eqref{item:Ano}~$\Rightarrow$~\eqref{item:qi-i-direction} in Theorem~\ref{thm:charact-Ano}).

\begin{proof}[Proof of \eqref{item:rk1-cc}~$\Rightarrow$~\eqref{item:rk1-expand}:]
We treat the case that $X$ is $\HH^n$, seen as the open unit ball of~$\R^n$ for a Euclidean norm $\Vert\cdot\Vert$, that $0$ belongs to $\Ccore_{\rho(\Gamma_0)}$, and that $\mathtt{d}_{\di X}$ is the metric induced by $\Vert\cdot\Vert$ on the unit sphere $\di X$ of~$\R^n$.

We first observe that for any element $g\in G$ that does not fix~$0$, the closed subset
$$\mathcal{H}_g := \{ x\in X ~|~ \Vert x\Vert \leq \Vert g\cdot x\Vert\} = \{ x\in X ~|~ \mathtt{d}_X(0,x) \leq \mathtt{d}_X(0,g\cdot x)\}$$
of~$X$ is bounded by the bisector between $0$ and $g^{-1}\cdot 0$.
Moreover, for any neighbourhood $\mathcal{V}$ in~$\overline{X}$ of the closure of $\mathcal{H}_g$ in~$\overline{X}$, the restriction of $g$ to $\di X \smallsetminus \mathcal{V}$ is uniformly expanding in the sense that
$$\inf_{z_1\neq z_2\ \mathrm{in}\ \di X \smallsetminus \mathcal{V}} \frac{\mathtt{d}_{\di X}(g\cdot z_1,g\cdot z_2)}{\mathtt{d}_{\di X}(z_1,z_2)} > 1.$$
Indeed, one can check this when $g$ is a pure translation along a geodesic of~$X$ through~$0$, and then conclude using the fact (Cartan decomposition) that any $g\in G$ can be written as $g = \kappa a\kappa'$ where $a\in G$ is such a pure translation and $\kappa,\kappa'\in G$ fix~$0$ and preserve $\Vert\cdot\Vert$.

Consider the Dirichlet domain of $\Ccore_{\rho(\Gamma_0)}$ centred at~$0$:
$$\mathcal{D} = \bigcap_{\gamma\in\Gamma_0} \mathcal{H}_{\rho(\gamma)} \cap \Ccore_{\rho(\Gamma_0)}.$$
It is compact by~\eqref{item:rk1-cc}.
Since $\Gamma_0$ acts properly discontinuously on $\Ccore_{\rho(\Gamma_0)}$ via~$\rho$, the set $\mathcal{F}$ of elements $\gamma\in\Gamma_0$ such that $\mathcal{D} \cap \rho(\gamma)\cdot\mathcal{D} \neq \emptyset$ and $\rho(\gamma)\cdot 0 \neq 0$ is finite.
One easily checks that $\mathcal{D} = \bigcap_{\gamma\in\mathcal{F}} \mathcal{H}_{\rho(\gamma)} \cap \Ccore_{\rho(\Gamma_0)}$.
For each $\gamma\in\mathcal{F}$, let $\mathcal{V}_{\rho(\gamma)}$ be a closed neighbourhood in~$\overline{X}$ of the closure of $\mathcal{H}_{\rho(\gamma)}$ in~$\overline{X}$.
If we choose these neighbourhoods small enough, then $\mathcal{D}' := \bigcap_{\gamma\in\mathcal{F}} \mathcal{V}_{\rho(\gamma)} \cap \Ccore_{\rho(\Gamma_0)}$ is still a compact subset of~$X$, and so $\Lambda_{\rho(\Gamma_0)} \subset \bigcup_{\gamma\in\mathcal{F}}\ (\di X \smallsetminus \mathcal{V}_{\rho(\gamma)})$.
We conclude using the fact, observed above, that \eqref{eqn:expand} holds for $\mathcal{U} := \di X \smallsetminus \mathcal{V}_{\rho(\gamma)}$ for each $\gamma\in\mathcal{F}$.
\end{proof}

\begin{proof}[Proof of \eqref{item:rk1-expand}~$\Rightarrow$~\eqref{item:rk1-cc}:]
We again treat the case that $X$ is $\HH^n$, seen as the open unit ball of~$\R^n$ for a Euclidean norm $\Vert\cdot\Vert$, and that the metric $\mathtt{d}_{\di X}$ is induced by $\Vert\cdot\Vert$.
We denote by $\mathtt{d}_{\mathrm{Euc}}$ the Euclidean distance on~$\R^n$ associated to $\Vert\cdot\Vert$.

Suppose that \eqref{item:rk1-expand} holds.
Then $\Lambda_{\rho(\Gamma_0)}$ contains at least two points.
(Indeed, by assumption $\rho(\Gamma_0)$ is an infinite discrete subgroup of~$G$, hence $\Lambda_{\rho(\Gamma_0)}$ is nonempty; moreover, the expansion assumption prevents $\Lambda_{\rho(\Gamma_0)}$ from being a singleton, as follows \eg from the classification of elementary discrete subgroups of~$G$: see \cite[Prop.\,3.2.1]{bow95}.)
Therefore $\Ccore_{\rho(\Gamma_0)}$ is nonempty.
Moreover, one can check (\eg using the Cartan decomposition as in the proof of \eqref{item:rk1-cc}~$\Rightarrow$~\eqref{item:rk1-expand} just above) that for any $z\in\Lambda_{\rho(\Gamma_0)}$, there exist a neighbourhood $\mathcal{U}$ of $z$ in~$\R^n$ (rather than just $\di X$) and an element $\gamma\in\Gamma_0$ such that \eqref{eqn:expand} holds for $\mathtt{d}_{\mathrm{Euc}}$ (rather than $\mathtt{d}_{\di X}$).

Suppose by contradiction that the action of $\Gamma_0$ on $\Ccore_{\rho(\Gamma_0)}$ via~$\rho$ is \emph{not} cocompact.
Let $(\varepsilon_m)_{m\in\N}$ be a sequence of positive reals going to~$0$.
For any~$m$, the set $\mathcal{K}_m := \{ x\in\nolinebreak\Ccore_{\rho(\Gamma_0)} \,|\, \mathtt{d}_{\mathrm{Euc}}(x,\Lambda_{\rho(\Gamma_0)}) \geq\nolinebreak \varepsilon_m\}$ is compact, hence there exists a $\rho(\Gamma_0)$-orbit contained in $\Ccore_{\rho(\Gamma_0)}\smallsetminus\mathcal{K}_m$. 
By proper discontinuity of the action on $\Ccore_{\rho(\Gamma_0)}$, the supremum of $\mathtt{d}_{\mathrm{Euc}}(\cdot,\Lambda_{\rho(\Gamma_0)})$ on this orbit is achieved at some point $x_m\in\Ccore_{\rho(\Gamma_0)}$, and by construction we have $0 < \mathtt{d}_{\mathrm{Euc}}(\rho(\gamma)\cdot x_m,\Lambda_{\rho(\Gamma_0)}) \leq \mathtt{d}_{\mathrm{Euc}}(x_m,\Lambda_{\rho(\Gamma_0)}) \leq \varepsilon_m$ for all $\gamma\in\Gamma_0$.
Up to passing to a subsequence, we may assume that $(x_m)_{m\in\N}$ converges to some $z\in\Lambda_{\rho(\Gamma_0)}$.
Consider a neighbourhood $\mathcal{U}$ of $z$ in~$\R^n$~and~an element $\gamma\in\Gamma_0$ such that \eqref{eqn:expand} holds for $\mathtt{d}_{\mathrm{Euc}}$, and let $c>1$ be~the~inf\-imum in \eqref{eqn:expand}.
For any $m\in\N$, there exists $z_m\in\Lambda_{\rho(\Gamma_0)}$ such that $\mathtt{d}_{\mathrm{Euc}}(\rho(\gamma)\cdot\nolinebreak x_m,\Lambda_{\rho(\Gamma_0)}) = \mathtt{d}_{\mathrm{Euc}}(\rho(\gamma)\cdot x_m,\rho(\gamma)\cdot z_m)$.
For large enough~$m$ we have $x_m,z_m\in\mathcal{U}$, and so $\mathtt{d}_{\mathrm{Euc}}(\rho(\gamma)\cdot\nolinebreak x_m,\Lambda_{\rho(\Gamma_0)}) \geq c\, \mathtt{d}_{\mathrm{Euc}}(x_m, z_m) \geq c\, \mathtt{d}_{\mathrm{Euc}}(x_m,\Lambda_{\rho(\Gamma_0)}) \geq c\, \mathtt{d}_{\mathrm{Euc}}(\rho(\gamma)\cdot\nolinebreak x_m,\Lambda_{\rho(\Gamma_0)})>0$.
This is impossible since $c>1$.
\end{proof}

\section{Classes of discrete subgroups in higher real rank} \label{sec:higher-rank}

We have seen in Section~\ref{sec:rank-1} two important classes of discrete subgroups of semisimple Lie groups~$G$ with $\Rrank(G)=1$, namely convex cocompact subgroups and geometrically finite subgroups.
These classes have been much studied, although many interesting questions remain open even in the case of $G = \PO(n,1)$ for $n\geq 4$ (see \eg \cite{kap07}).

We now turn to infinite discrete subgroups of semisimple Lie groups $G$ for $\Rrank(G)\geq 2$.
These discrete subgroups, beyond lattices, remain more mysterious, and very few general results are known (see \cite{fg23} for a notable exception).
Recently, an important class has emerged, namely the class of \emph{Anosov subgroups}, which are by definition the images of the \emph{Anosov representations} of Gromov hyperbolic groups introduced by Labourie \cite{lab06} as part of his study of Hitchin representations (see Section~\ref{subsec:deform-Fuchsian-higher-rank}).
In fact, most examples in Section~\ref{sec:flex-examples} are Anosov subgroups.
We now discuss these subgroups, make the link with convex cocompactness, and mention some generalisations.

\subsection{Anosov subgroups} \label{subsec:Anosov}

Given a noncompact semisimple Lie group~$G$, there are several possible types of Anosov subgroups of~$G$, depending on the choice of one of the (finitely many) \emph{flag varieties} $G/P$ of~$G$, where $P$ is a proper parabolic subgroup of~$G$.
For simplicity, in these notes we consider $G = \PGL(d,\mathbb{K})$ or $\SL^{\pm}(d,\mathbb{K}) = \{ g\in\GL(d,\mathbb{K}) \,|\, \det(g) = \pm 1\}$ where $\mathbb{K}=\R$ or~$\C$; we take $P = P_i$ to be the stabiliser in~$G$ of an $i$-plane of~$\mathbb{K}^d$, for some $1\leq i\linebreak\leq d-1$, so that $G/P_i = \Gr_i(\mathbb{K}^d)$ is the Grassmannian of $i$-planes of~$\mathbb{K}^d$.

\subsubsection{Definition and first observations}

Here is the original definition from Labourie, which appeared in \cite{lab06} for surface groups $\Gamma_0 = \pi_1(S)$ and in \cite{gw12} for general hyperbolic groups.

\begin{defn} \label{def:Ano}
Let $\Gamma_0$ be an infinite Gromov hyperbolic group and $G = \PGL(d,\mathbb{K})$ or $\SL^{\pm}(d,\mathbb{K})$.
For $1\leq i\leq d-1$, a representation $\rho : \Gamma_0\to G$ is \emph{$P_i$-Anosov} if there exist $\rho$-equivariant maps $\xi_i : \di\Gamma_0\to G/P_i = \Gr_i(\mathbb{K}^d)$ and $\xi_{d-i} : \di\Gamma_0\to G/P_{d-i} = \Gr_{d-i} (\mathbb{K}^d)$ which
\begin{itemize}
  \item are continuous,
  \item are \emph{transverse}: $\xi_i(w) \oplus \xi_{d-i}(w') = \mathbb{K}^d$ for all $w\neq w'$ in $\di\Gamma_0$;
  \item satisfy a uniform contraction property (Condition~\ref{cond:Ano} below) which strengthens the dynamics-preserving condition of Theorem~\ref{thm:charact-cc-rank-1}.\eqref{item:rk1-bound-map-dyn-preserv}.
\end{itemize}
\end{defn}

By \emph{the dynamics-preserving condition of Theorem~\ref{thm:charact-cc-rank-1}.\eqref{item:rk1-bound-map-dyn-preserv}} for~$\xi_i$ we mean that for any infinite-order element $\gamma\in\Gamma_0$, the image by~$\xi_i$ of the attracting fixed point of $\gamma$ in $\di\Gamma_0$ (see Section~\ref{subsec:charact-cc-rank-1}) is an attracting fixed point of $\rho(\gamma)$ in $\Gr_i(\mathbb{K}^d)$.

We note that for an element $g\in G$, the property of admitting an attracting fixed point in $G/P_i$ can be characterised in terms of eigenvalues, namely as $(\lambda_i - \lambda_{i+1})(g) > 0$ (Notation~\ref{not:lambda-mu}).
In this case the attracting fixed point is unique and we say that $g$ is \emph{proximal} in $\Gr_i(\mathbb{K}^d)$.

\begin{rem} \label{rem:PGL-SL}
For our purposes, working with $\PGL(d,\mathbb{K})$ or $\SL^{\pm}(d,\mathbb{K})$ is equivalent.
Indeed, a representation $\rho : \Gamma_0\to\SL^{\pm}(d,\mathbb{K})$ is $P_i$-Anosov if and only if its composition with the natural projection $\SL^{\pm}(d,\mathbb{K}) \to \PGL(d,\mathbb{K})$ is $P_i$-Anosov, and up to passing to a finite-index subgroup (which does not change the property of being $P_i$-Anosov) any representation $\rho : \Gamma_0\to\PGL(d,\mathbb{K})$ with $\Gamma_0$ Gromov hyperbolic lifts to $\SL^{\pm}(d,\mathbb{K})$.
\end{rem}

The uniform contraction property in Definition~\ref{def:Ano} is reminiscent of the condition defining Anosov flows in dynamics, which explains the terminology \emph{Anosov representation}.
Before stating it (Condition~\ref{cond:Ano}), let us make a few elementary observations that already follow from the fact that $\xi_i$ and $\xi_{d-i}$ are continuous, transverse, and dynamics-preserving.

\begin{lem} \label{lem:basic-Ano}
If $\rho : \Gamma\to G$ is $P_i$-Anosov, then
\begin{enumerate}[(1)]
  \item\label{item:basic-Ano-1} the boundary maps $\xi_i$ and $\xi_{d-i}$ are unique, and \emph{compatible}:\linebreak $\xi_{\min(i,d-i)}(w) \subset \xi_{\max(i,d-i)}(w)$ for all $w\in\di\Gamma_0$; the image of~$\xi_i$ is the \emph{proximal limit set} of $\rho(\Gamma_0)$ in~$\Gr_i(\mathbb{K}^d)$, \ie the closure in~$\Gr_i(\mathbb{K}^d)$ of the set of attracting fixed points of proximal elements of $\rho(\Gamma_0)$;
  \item\label{item:basic-Ano-2} $\xi_i$ and $\xi_{d-i}$ are injective, hence they are homeomorphisms onto their images;
  \item\label{item:basic-Ano-3} $\rho$ has finite kernel and discrete image.
\end{enumerate}
\end{lem}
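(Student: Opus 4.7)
The plan is to prove the three parts in order, with each relying on the previous, using only continuity, transversality, and the dynamics-preserving property (Condition~\ref{cond:Ano} is not needed here). Throughout I would invoke two standard facts about an infinite Gromov hyperbolic group $\Gamma_0$: that the attracting fixed points of infinite-order elements form a dense subset of $\di\Gamma_0$ (at least when $\Gamma_0$ is nonelementary), and that the action of $\Gamma_0$ on $\di\Gamma_0$ is a convergence action, as recalled in Section~\ref{subsec:charact-cc-rank-1}.

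For part~\eqref{item:basic-Ano-1}, the dynamics-preserving condition forces $\xi_i(w_\gamma^+)$ to coincide with the \emph{unique} attracting fixed point of $\rho(\gamma)$ in $\Gr_i(\mathbb{K}^d)$ for each infinite-order $\gamma\in\Gamma_0$. The values of $\xi_i$ are thus prescribed on a dense subset of $\di\Gamma_0$, and continuity yields uniqueness (likewise for $\xi_{d-i}$). For compatibility, I would note that if $\rho(\gamma)$ is proximal in both $\Gr_i$ and $\Gr_{d-i}$ with, say, $i \leq d-i$, then the two attracting fixed points are the sums of generalised eigenspaces for the top $i$ and top $d-i$ eigenvalues of $\rho(\gamma)$, which nest; since nesting of subspaces is a closed condition on $\Gr_i\times\Gr_{d-i}$, it persists to all of $\di\Gamma_0$ by density and continuity. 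The proximal limit set description then follows, since $\xi_i(\di\Gamma_0)$ is the continuous image of a compact set, hence closed, and contains the dense set of attracting fixed points of proximal elements of $\rho(\Gamma_0)$.

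For part~\eqref{item:basic-Ano-2}, the key input is the compatibility just proved. Suppose for contradiction that $\xi_i(w)=\xi_i(w')$ for some $w\neq w'$, and assume without loss of generality $i \leq d-i$. Compatibility applied at $w'$ gives $\xi_i(w')\subset\xi_{d-i}(w')$, hence $\xi_i(w)\subset\xi_{d-i}(w')$; but transversality at the pair $(w,w')$ requires $\xi_i(w)\oplus\xi_{d-i}(w')=\mathbb{K}^d$, contradicting the fact that the nonzero subspace $\xi_i(w)$ sits inside $\xi_{d-i}(w')$. The same argument handles $\xi_{d-i}$. A continuous injection from the compact space $\di\Gamma_0$ onto its image is then automatically a homeomorphism.

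For part~\eqref{item:basic-Ano-3}, any $\gamma\in\ker\rho$ satisfies $\xi_i(\gamma\cdot w)=\rho(\gamma)\cdot\xi_i(w)=\xi_i(w)$ by equivariance, so by injectivity $\gamma$ acts trivially on $\di\Gamma_0$; since the kernel of the action of a hyperbolic group on its boundary is finite, $\ker\rho$ is finite. For discreteness, suppose $\rho(\gamma_n)\to g$ in $G$ for pairwise distinct $\gamma_n\in\Gamma_0$. Up to a subsequence, the convergence action provides $w^+,w^-\in\di\Gamma_0$ with $\gamma_n\cdot w\to w^+$ for every $w\neq w^-$. Combining equivariance, continuity of $\xi_i$, and convergence $\rho(\gamma_n)\to g$ forces $g\cdot\xi_i(w)=\xi_i(w^+)$ for all $w\neq w^-$, which is absurd since $g$ is a bijection of $\Gr_i(\mathbb{K}^d)$ while $\xi_i$ has infinite image. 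The main technical nuisance I anticipate is consistently handling the elementary case where $\Gamma_0$ is virtually cyclic and $\di\Gamma_0$ has only two points: there the density-based arguments collapse and one must instead argue directly from the proximality of $\rho$ applied to a generator, using that the two fixed subspaces of that generator in $\Gr_i(\mathbb{K}^d)$ are transverse by transversality and hence distinct.
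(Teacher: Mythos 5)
Your proposal is correct and follows essentially the same route as the paper: uniqueness and compatibility via density of attracting fixed points plus continuity (with your explicit remark that nesting is a closed condition filling in a step the paper leaves implicit), injectivity from compatibility versus transversality, and finite kernel/discreteness from the convergence action transported through $\xi_i$, with the virtually cyclic case handled separately via proximality of the image of a generator, exactly as in the paper. The only cosmetic difference is that you rule out convergence of $\rho(\gamma_n)$ to an arbitrary $g\in G$ (properness), whereas the paper only needs to rule out convergence to the identity; both suffice.
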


By~\eqref{item:basic-Ano-3}, the images of $P_i$-Anosov representations are infinite discrete subgroups of~$G$; we shall call them \emph{$P_i$-Anosov subgroups}.

\begin{proof}
\eqref{item:basic-Ano-1} Recall from Section~\ref{subsec:charact-cc-rank-1} that the subset of $\di\Gamma_0$ consisting of the attracting fixed points of infinite-order elements of~$\Gamma_0$ is dense in $\di\Gamma_0$.
Since $\xi_i$ and~$\xi_{d-i}$ are dynamics-preserving, they are uniquely determined on this subset, and compatible on this subset.
By continuity, they are uniquely determined and compatible on all of $\di\Gamma_0$.
Moreover, the image of~$\xi_i$ is the proximal limit set of $\rho(\Gamma_0)$ in $\Gr_i(\mathbb{K}^d)$.

\eqref{item:basic-Ano-2} For any $w\neq w'$ in $\di\Gamma_0$, the subspaces $\xi_i(w)$ and $\xi_{d-i}(w')$ are transverse by definition, whereas $\xi_i(w)$ and $\xi_{d-i}(w)$ are not by \eqref{item:basic-Ano-1} above.

\eqref{item:basic-Ano-3} Suppose $\Gamma_0$ is nonelementary.
In order to show that $\rho$ has finite kernel and discrete image, it is sufficient to consider an arbitrary sequence $(\gamma_k)_{k\in\N}$ of pairwise distinct points of~$\Gamma_0$ and to check that $(\rho(\gamma_k))_{k\in\N}$ does not converge to the identity of~$G$.
Recall from Section~\ref{subsec:charact-cc-rank-1} that the action of $\Gamma_0$ on $\di\Gamma_0$ is a convergence action.
Therefore, up to passing to a subsequence, there exist $w^+,w^-\in\di\Gamma_0$ such that $\gamma_k\cdot w\to w^+$ for all $w\in\di\Gamma_0\smallsetminus\{w^-\}$.
By $\rho$-equivariance and continuity of~$\xi_i$, we then have $\rho(\gamma_k)\cdot\xi_i(w) = \xi_i(\gamma_k\cdot w) \to \xi_i(w^+)$ for all $w\in\di\Gamma_0\smallsetminus\{w^-\}$.
Since $\di\Gamma_0$ is infinite and $\xi_i$ is injective, there exists $w\in\di\Gamma_0\smallsetminus\{w^-\}$ such that $\xi_i(w) \neq \xi_i(w^+)$.
The convergence $\rho(\gamma_k)\cdot\xi_i(w) \to \xi_i(w^+)$ then implies that $(\rho(\gamma_k))_{n\in\N}$ does not converge to the identity element of~$G$.
This shows that $\rho$ has finite kernel and discrete image.

If $\Gamma_0$ is elementary, then it admits a finite-index subgroup $\Gamma'_0$ which is cyclic.
The fact that $\xi_i$ is dynamics-preserving implies that $\rho$ is injective and discrete in restriction to~$\Gamma'_0$.
From this one easily deduces that $\rho$ has finite kernel and discrete image.
\end{proof}

\subsubsection{The uniform contraction condition}

Let us state this condition in the original case considered by Labourie \cite{lab06}, where $\Gamma_0 = \pi_1(M)$ for some closed negatively curved manifold~$M$.
We denote by $\widetilde{M}$ the universal cover of~$M$, by $T^1$ the unit tangent bundle, and by $(\varphi_t)_{t\in\R}$ the geodesic flow on either $T^1(M)$ or $T^1(\widetilde{M})$.
(For a general Gromov hyperbolic group~$\Gamma_0$, one should replace $T^1(\widetilde{M})$ by a certain \emph{flow space} for~$\Gamma_0$, see \cite{gw12} or \cite[\S\,4.1]{bps19}.)

For simplicity, we take $G = \SL^{\pm}(d,\mathbb{K})$ (see Remark~\ref{rem:PGL-SL}).
Any representation $\rho : \Gamma_0\to G$ then determines a flat vector bundle
$$E^{\rho} = \Gamma_0\backslash (T^1(\widetilde{M})\times\mathbb{K}^d)$$
over $T^1(M)=\Gamma_0\backslash T^1(\widetilde{M})$, where $\Gamma_0$ acts on $T^1(\widetilde{M})\times\mathbb{K}^d$ by $\gamma\cdot (\tilde{x},v) = (\gamma\cdot\tilde{x}, \rho(\gamma)\cdot v)$.
The geodesic flow $(\varphi_t)_{t\in\R}$ on $T^1(M)$ lifts to a flow $(\psi_t)_{t\in\R}$ on $E^{\rho}$, given by $\psi_t\cdot [(\tilde{x},v)] = [(\varphi_t\cdot\tilde{x},v)]$.

Suppose, as in Definition~\ref{def:Ano}, that there exist continuous, transverse, $\rho$-equivariant boundary maps $\xi_i : \di\Gamma_0\to\Gr_i(\mathbb{K}^d)$ and $\xi_{d-i} : \di\Gamma_0\to\Gr_{d-i} (\mathbb{K}^d)$.
By transversality, for each $\tilde{x}\in T^1(\widetilde{M})$ we have a decomposition $\mathbb{K}^d = \xi_i(\tilde{x}^+) \oplus \xi_{d-i}(\tilde{x}^-)$, where $\tilde{x}^{\pm} = \lim_{t\to\pm\infty} \varphi_t\cdot\tilde{x} \in \di\widetilde{M} \simeq \di\Gamma_0$ are the forward and backward endpoints of the geodesic determined by~$\tilde{x}$, and this defines a decomposition of the vector bundle $E^{\rho}$ into the direct sum of two subbundles $E_i^{\rho} = \{ [(\tilde{x},v)] \,|\, v\in\xi_i(\tilde{x}^+)\}$ and $E_{d-i}^{\rho} = \{ [(\tilde{x},v)] \,|\, v\in\nolinebreak\xi_{d-i}(\tilde{x}^-)\}$.
This decomposition is invariant under the flow $(\psi_t)$.
By definition, the representation $\rho$ is $P_i$-Anosov if the following ``dominated splitting'' condition is satisfied.

\begin{condition} \label{cond:Ano}
The flow $(\psi_t)_{t\in\R}$ uniformly contracts $E_i^{\rho}$ with respect to $E_{d-i}^{\rho}$, \ie given a continuous family $(\Vert\cdot\Vert_x)_{x\in T^1(M)}$ of norms on the fibers $E^{\rho}(x)$, there exist $C,C'>0$ such that for any $t\geq 0$, any $x\in T^1(M)$, and any nonzero $\upsilon_i\in E_i^{\rho}(x)$ and $\upsilon_{d-i}\in E_{d-i}^{\rho}(x)$,
$$\frac{\Vert\psi_t\cdot \upsilon_i\Vert_{\varphi_t\cdot x}}{\Vert\psi_t\cdot \upsilon_{d-i}\Vert_{\varphi_t\cdot x}} \leq e^{-Ct+C'} \, \frac{\Vert \upsilon_i\Vert_x}{\Vert \upsilon_{d-i}\Vert_x},$$
\end{condition}

By compactness of $T^1(M)$, this condition does not depend on the choice of continuous family of norms $(\Vert\cdot\Vert_x)_{x\in T^1(M)}$ (changing the norms may change the values of $C,C'$ but not their existence).

\begin{rem} \label{rem:Ano-Zariski-dense}
Guichard and Wienhard \cite{gw12} showed that if there exist $\rho$-equivariant maps $\xi_i$ and~$\xi_{d-i}$ which are continuous, transverse, and dynamics-preserving, and if the group $\rho(\Gamma_0)$ is \emph{Zariski-dense} in~$G$, then Condition~\ref{cond:Ano} is automatically satisfied.
\end{rem}

\subsubsection{Properties}

\begin{itemize}
  \item $P_i$-Anosov is equivalent to $P_{d-i}$-Anosov, as the integers $i$ and $d-i$ play a similar role in Definition~\ref{def:Ano} and Condition~\ref{cond:Ano} (up to reversing the flow, which switches contraction and expansion).
  In particular, we may restrict to $P_i$-Anosov for $1\leq i\leq d/2$.
  \item When $\Rrank(G) = 1$ (\ie $d=2$ for $G=\PGL(d,\mathbb{K})$ or $\SL^{\pm}(d,\mathbb{K})$), there is only one proper parabolic subgroup $P$ of~$G$ up to conjugation (see Remark~\ref{rem:bound-G/K-rank-1}), hence only one notion of Anosov.
  In that case, an infinite discrete subgroup of~$G$ is Anosov if and only if it is convex cocompact in the classical sense of Definition~\ref{def:cc-gf-rank-1}.
  \item When $\Rrank(G)\geq 2$ (\ie $d\geq 3$ for $G=\PGL(d,\mathbb{K})$ or $\SL^{\pm}(d,\mathbb{K})$), Anosov subgroups are \emph{not} lattices of~$G$ (since Anosov subgroups are Gromov hyperbolic unlike lattices, see Section~\ref{subsec:rank}).
  \item Uniform contraction over a compact space as in Condition~\ref{cond:Ano} is stable under small deformations, which implies the following analogue of Fact~\ref{fact:cc-open-rank-1}.
  \begin{fact} \label{fact:Ano-open}
  Let $G$ be a noncompact semisimple Lie group and $P$ a proper parabolic subgroup of~$G$.
  For any infinite Gromov hyperbolic group~$\Gamma_0$, the space of $P$-Anosov representations is open in $\Hom(\Gamma_0,G)$.
  \end{fact}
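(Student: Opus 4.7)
The plan is to deduce openness from the \emph{structural stability of dominated splittings} on a compact base, which is a classical dynamical principle. I would fix a $P_i$-Anosov representation $\rho_0 : \Gamma_0\to G$ with boundary maps $\xi_i^0, \xi_{d-i}^0$, and work on an appropriate compact flow space quotient $U_{\Gamma_0} = \Gamma_0\backslash\widetilde{U}_{\Gamma_0}$ (the Gromov--Mineyev--Champetier replacement of $T^1(M)$ used in \cite{gw12}). The flat bundle $E^{\rho_0}$ then carries the invariant splitting $E^{\rho_0} = E_i^{\rho_0}\oplus E_{d-i}^{\rho_0}$ satisfying Condition~\ref{cond:Ano}, with $E_i^{\rho_0}(\tilde{x}) = \xi_i^0(\tilde{x}^+)$ and $E_{d-i}^{\rho_0}(\tilde{x}) = \xi_{d-i}^0(\tilde{x}^-)$.

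First I would observe that for $\rho$ close to $\rho_0$, the topological bundle underlying $E^{\rho}$ may be canonically identified with that of $E^{\rho_0}$, and the lifted flow $(\psi_t^{\rho})_{t\in\R}$ depends continuously on $\rho$ in the $C^0$-topology on any finite time interval, since on a fundamental domain it is determined by $\rho$ through only finitely many group elements. The core step is then a graph-transform argument: pick $t_0>0$ large enough that $\psi_{t_0}^{\rho_0}$ strictly contracts a cone field, inside the Grassmannian bundle $\Gr_i(E^{\rho_0})\to U_{\Gamma_0}$, around the section $E_i^{\rho_0}$ and transverse to $E_{d-i}^{\rho_0}$; by continuity, this same cone field is still strictly contracted by $\psi_{t_0}^{\rho}$ for $\rho$ in a neighbourhood $\mathcal{V}$ of $\rho_0$. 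A Banach fixed-point argument in the space of continuous sections lying in that cone then yields, for every $\rho\in\mathcal{V}$, a unique $(\psi_t^{\rho})$-invariant section $E_i^{\rho}$; running the argument with reversed time gives $E_{d-i}^{\rho}$. The contraction rates vary continuously, so Condition~\ref{cond:Ano} persists with slightly weaker constants.

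Finally I would translate the invariant splitting back into boundary maps to verify Definition~\ref{def:Ano}. By $\Gamma_0$-equivariance of the graph-transform construction, $E_i^{\rho}$ lifts to a $\Gamma_0$-equivariant continuous section over $\widetilde{U}_{\Gamma_0}$; because this section is attracting under $\psi_t^{\rho}$ and forward orbits of points sharing the same endpoint $\tilde{x}^+$ become exponentially asymptotic, $E_i^{\rho}(\tilde{x})$ depends only on $\tilde{x}^+$, defining a continuous $\rho$-equivariant map $\xi_i : \di\Gamma_0\to\Gr_i(\mathbb{K}^d)$; symmetrically one obtains $\xi_{d-i}$. Transversality is an open condition on pairs of continuous maps into complementary-dimensional Grassmannians and so persists from $\rho_0$. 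The main obstacle is the persistence of the dominated splitting under $C^0$-perturbations on the compact base; once that cone-field/graph-transform step is in place, the passage to boundary maps and the verification of the axioms of Definition~\ref{def:Ano} are essentially formal.
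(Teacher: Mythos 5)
Your proposal is correct and follows exactly the route the paper indicates: the paper's entire justification for Fact~\ref{fact:Ano-open} is the one-line remark that uniform contraction over the compact base $T^1(M)$ (or the flow space of~$\Gamma_0$) as in Condition~\ref{cond:Ano} is stable under small deformations, and your cone-field/graph-transform argument, together with the recovery of the boundary maps from the attracting invariant section, is the standard way of making that remark precise (as in Labourie and Guichard--Wienhard).
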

  \item Anosov representations behave well under Lie group homomorphisms: the following holds similarly to Fact~\ref{fact:cc-embed-rank-1}.
  (We refer to Remark~\ref{rem:charact-cc-rank-1} for the notion of a hyperbolic element of~$G'$.)
  \begin{fact}[{see \cite{gw12}}] \label{fact:cc-embed-Ano}
  Let $G'$ be a semisimple Lie group with $\Rrank(G')\linebreak =1$ and let $\tau : G'\to\PGL(d,\mathbb{K})$ be a Lie group homomorphism with compact kernel.
  For any Gromov hyperbolic group~$\Gamma_0$, any representation $\sigma_0 : \Gamma_0\to G'$, and any $1\leq i\leq d-1$, the following are equivalent:
  \begin{enumerate}[(1)]
    \item the representation $\tau\circ\sigma_0 : \Gamma_0\to\PGL(d,\mathbb{K})$ is $P_i$-Anosov;
    \item $\sigma_0$ is convex cocompact (Definition~\ref{def:cc-gf-rank-1}) and $(\lambda_i - \lambda_{i+1})(\tau(g')) > 0$ for some hyperbolic element $g'\in G'$.
  \end{enumerate}
  \end{fact}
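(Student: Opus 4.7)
The plan is to use the continuous $G'$-equivariant identification $\di(G'/K') \simeq G'/P'$ with $P' = M'A'N'$ a proper parabolic (Remark~\ref{rem:bound-G/K-rank-1}) as a bridge between the rank-one setting and the flag variety $\Gr_i(\mathbb{K}^d)$. The linking object will be a continuous $G'$-equivariant map $\iota_i : G'/P' \to \Gr_i(\mathbb{K}^d)$ built from $\tau$, defined as follows. Each nontrivial hyperbolic element $g' \in G'$ has a unique attracting fixed point in $\di(G'/K')$. The hypothesis $(\lambda_i - \lambda_{i+1})(\tau(g')) > 0$ is invariant under $G'$-conjugation; since $\Rrank(G')=1$, any nontrivial hyperbolic element is $G'$-conjugate to a power of any fixed generator of $A'^{+}$, so positivity for one such element implies it for every nontrivial hyperbolic $g'$. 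Thus $\tau(g')$ is proximal in $\Gr_i(\mathbb{K}^d)$ with a unique attracting $i$-plane $V_i(g')$; set $\iota_i$ to send the attracting fixed point of $g'$ to $V_i(g')$. Concretely, $\iota_i(eP')$ is the sum of weight spaces of $\tau|_{A'}$ corresponding to the $i$ top weights on $\mathbb{K}^d$; this subspace is $P'$-invariant because $M'$ centralises $A'$ and $N'$, being generated by positive-root unipotents, raises weights. Hence $\iota_i$ extends to a well-defined continuous $G'$-equivariant map on all of $G'/P'$, and $\iota_{d-i}$ is defined symmetrically. For distinct $\eta \neq \eta'$ in $G'/P'$, picking a hyperbolic $g' \in G'$ with attracting and repelling fixed points $\eta,\eta'$ exhibits $\iota_i(\eta)$ and $\iota_{d-i}(\eta')$ as the top-$i$ and bottom-$(d-i)$ eigenspaces of $\tau(g')$, hence transverse.

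For (2) $\Rightarrow$ (1), Theorem~\ref{thm:charact-cc-rank-1}.\eqref{item:rk1-bound-map-strong-dyn-preserv} applied to the convex cocompact $\sigma_0$ yields a continuous injective strongly dynamics-preserving $\xi' : \di\Gamma_0 \to G'/P'$, so that $\xi_i := \iota_i \circ \xi'$ and $\xi_{d-i} := \iota_{d-i} \circ \xi'$ are continuous, transverse, $(\tau \circ \sigma_0)$-equivariant and dynamics-preserving. The substantive remaining step---and the main obstacle of the proof---is the uniform contraction in Condition~\ref{cond:Ano}. Convex cocompactness of $\sigma_0$ makes $\Gamma_0 \backslash T^1(\Ccore_{\sigma_0(\Gamma_0)})$ compact and allows us to take it as the compact base for the flow in Condition~\ref{cond:Ano}. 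Along a geodesic of $G'/K'$ parametrised by a one-parameter subgroup $(a_t) \subset A'$, the bundle $E^{\tau \circ \sigma_0}$ pulls back to a bundle whose flow preserves the splitting $V_i(a_t) \oplus V_{d-i}(a_t^{-1})$ coming from the weight decomposition of $\tau|_{A'}$, and contracts the two factors at exponential rates differing by the strictly positive number $(\lambda_i - \lambda_{i+1})(\tau(a_1))$. Compactness of the base then upgrades this pointwise exponential decay into the uniform form required by Condition~\ref{cond:Ano}.

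For (1) $\Rightarrow$ (2), Lemma~\ref{lem:basic-Ano}.\eqref{item:basic-Ano-3} gives that $\tau \circ \sigma_0$ has finite kernel and discrete image. A standard consequence of Condition~\ref{cond:Ano}, obtained by applying the uniform contraction to the periodic orbit of any element $\gamma \in \Gamma_0$ to force the ratio of the $i$-th and $(i+1)$-th singular values of $\tau(\sigma_0(\gamma))$ to grow exponentially in word length, is that $\tau \circ \sigma_0$ is a quasi-isometric embedding of $(\Gamma_0,\mathtt{d}_{\mathrm{Cay}(\Gamma_0)})$ into $(G/K,\mathtt{d}_X)$. Since $\ker \tau$ is compact, the induced map $G'/K' \to G/K$ is a quasi-isometric embedding, so $\sigma_0$ is itself a quasi-isometric embedding into $G'/K'$, and Theorem~\ref{thm:charact-cc-rank-1}.\eqref{item:rk1-qi}~$\Leftrightarrow$~\eqref{item:rk1-cc} yields that $\sigma_0$ is convex cocompact. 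Finally, any infinite-order $\gamma \in \Gamma_0$ has infinite-order image $\sigma_0(\gamma) \in G'$ (as $\ker \sigma_0$ is finite), and $\sigma_0(\gamma)$ is hyperbolic because a convex cocompact subgroup of a rank-one group contains no parabolic elements; Lemma~\ref{lem:basic-Ano}.\eqref{item:basic-Ano-1} then forces $\tau(\sigma_0(\gamma))$ to be proximal in $\Gr_i$, so $g' := \sigma_0(\gamma)$ provides the required hyperbolic element with $(\lambda_i - \lambda_{i+1})(\tau(g')) > 0$.
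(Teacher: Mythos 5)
The paper does not prove Fact~\ref{fact:cc-embed-Ano}: it is stated with a citation to \cite{gw12}, and the only indication of method is the remark immediately following it, namely that $\tau$ induces an embedding $\di\tau_i : G'/P'\hookrightarrow\Gr_i(\mathbb{K}^d)$ and that the Anosov boundary map is the composition of the rank-one boundary map of~$\sigma_0$ with~$\di\tau_i$. Your map $\iota_i$ is exactly this $\di\tau_i$, and your overall outline (weight-space description of $\iota_i$, transversality via transitivity on distinct pairs of $G'/P'$, Theorem~\ref{thm:charact-cc-rank-1}.\eqref{item:rk1-bound-map-strong-dyn-preserv} to produce $\xi'$, contraction rate governed by $(\lambda_i-\lambda_{i+1})(\tau(a_1))$, and the converse via the singular value gap and Lemma~\ref{lem:basic-Ano}) is the standard argument of \cite{gw12}, so I regard the proposal as correct in substance. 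Two points should be tightened. First, a nontrivial hyperbolic element of~$G'$ is \emph{not} in general conjugate to a power of a fixed generator of~$A'^{+}$; it is conjugate to an element of $\exp(\mathfrak{a}'^{+})M'$, and the correct reason the gap condition propagates is that the eigenvalue moduli of $\tau(\exp(tH)m)$ are exactly the numbers $e^{t\chi(H)}$ over the weights $\chi$ of $\tau|_{A'}$ (the compact factor $\tau(m)$ preserves each weight space with unimodular eigenvalues), so $(\lambda_i-\lambda_{i+1})$ scales linearly in $t>0$. Second, $\Gamma_0\backslash T^1(\Ccore_{\sigma_0(\Gamma_0)})$ is not invariant under the geodesic flow and contains geodesics with an endpoint outside $\Lambda_{\sigma_0(\Gamma_0)}$, where the splitting $\xi_i(\tilde x^+)\oplus\xi_{d-i}(\tilde x^-)$ is undefined; the compact base for Condition~\ref{cond:Ano} must be the (flow-invariant) set of geodesics with \emph{both} endpoints in the limit set, which is cocompact precisely because $\sigma_0$ is convex cocompact. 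With these corrections the argument goes through; alternatively, the implication \eqref{item:qi-i-direction}~$\Rightarrow$~\eqref{item:Ano} of Theorem~\ref{thm:charact-Ano} lets you bypass Condition~\ref{cond:Ano} entirely by converting the rank-one quasi-isometric embedding directly into a linear lower bound on $(\mu_i-\mu_{i+1})\circ\tau\circ\sigma_0$.
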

  \noindent
  In this case, $\tau$ induces an embedding $\di\tau_i : G'/P'\hookrightarrow\Gr_i(\mathbb{K}^d)$ (where $G'/P'$ is the visual boundary of the symmetric space of~$G'$, see Remark~\ref{rem:bound-G/K-rank-1}) and the boundary map of~$\rho_0$ is the composition of the boundary map $\di\Gamma_0\to G'/P'$ of~$\sigma_0$ (see Theorem~\ref{thm:charact-cc-rank-1}) with~$\di\tau_i$.
  Moreover, by Fact~\ref{fact:Ano-open} there is in that case a neighbourhood\linebreak of $\tau\circ\sigma_0$ in $\Hom(\Gamma_0,\PGL(d,\mathbb{K}))$ consisting entirely of $P_i$-Anosov representations (hence with finite kernel and discrete image --- see\linebreak Lemma~\ref{lem:basic-Ano}.\eqref{item:basic-Ano-3}).
\end{itemize}

\subsubsection{Examples in higher real rank}

Many of the discrete subgroups in Section~\ref{sec:flex-examples} were Anosov subgroups.

\begin{itemize}
  \item Section~\ref{subsec:ping-pong-higher-rank}: It follows from the work of Benoist \cite{ben97} that the ping pong groups of Claim~\ref{claim:ping-pong-proj} are quasi-isometrically embedded (see Remark~\ref{rem:div-in-G/K}) in $\PGL(d,\R)$.
  They are in fact $P_1$-Anosov: see \cite{clss17,klp14}.
  \item Section~\ref{subsec:ping-pong-higher-rank}: When they are defined by $B_1^{\pm},\dots,B_m^{\pm}$ which have pairwise disjoint closures, the Schottky groups in $\PGL(2n,\mathbb{K})$ of Nori and Seade--Verjovsky are $P_n$-Anosov (see \cite{gw12}) and the crooked Schottky groups in $\Sp(2n,\R) \subset \SL(2n,\R)$ are $P_1$-Anosov (see \cite{bk-Schottky}).
  \item Section~\ref{subsec:deform-Fuchsian-higher-rank}: By Facts \ref{fact:Ano-open} and~\ref{fact:cc-embed-Ano}, the Barbot representations of closed surface groups into $\SL(d,\R)$ are $P_1$-Anosov.
  The Hitchin representations into $\PSL(d,\R)$ are $P_i$-Anosov for all $1\leq i\leq d-1$: this is Labourie's original result from \cite{lab06}, where he introduced Anosov representations.
  The maximal representations of closed surface groups into $\SO(2,n) \subset \SL(n+2,\R)$ are $P_1$-Anosov (see \cite{bilw05,glw}).
  We refer to Figures \ref{fig:Barbot} and~\ref{fig:Hitchin-bound-map} for some illustrations of boundary maps.
  
  \begin{rem}
  Being $P_i$-Anosov for all $1\leq i\leq d-1$ is the strongest possible form of Anosov; in this case, the various boundary maps $\xi_i : \di\Gamma_0\to\Gr_i(\mathbb{K}^d)$ for $1\leq i\leq d-1$ combine into a continuous, injective, $\rho$-equivariant boundary map $\xi : \di\Gamma_0\to\mathrm{Flags}(\R^d)$ as in the proof of Theorem~\ref{thm:Hitchin-comp}.
  \end{rem}
  
  \begin{figure}[ht!]
\vspace{-0.3cm}
\includegraphics[scale=0.3]{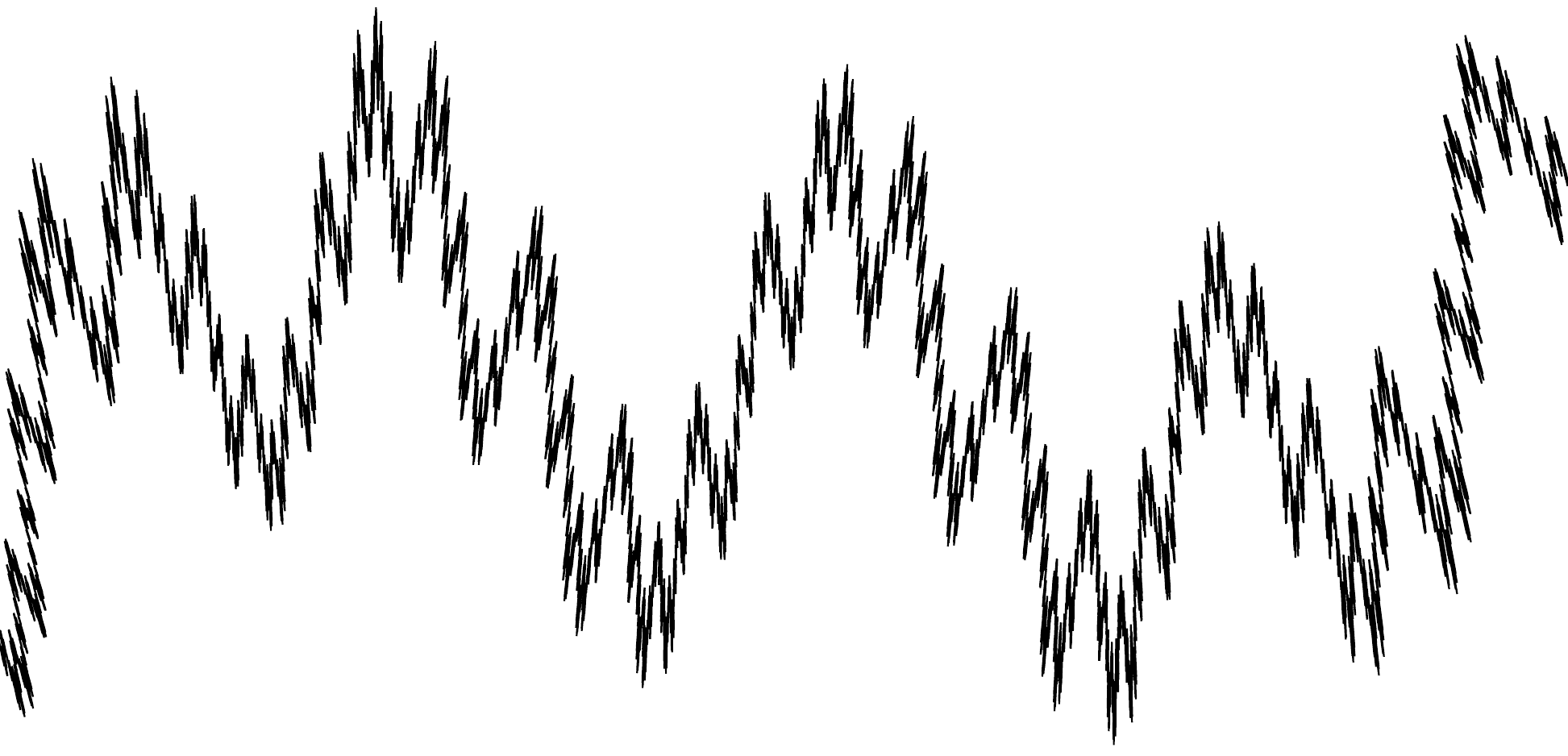}
\vspace{-0.2cm}
\caption{The image of the boundary map $\xi_1 : \di\Gamma_0\to\PP(\R^3)$ of a representation $\rho : \Gamma_0 = \pi_1(S) \to \SL(3,\R)$ which is a small deformation of $\Gamma_0 \overset{\sigma_0}{\longhookrightarrow} \SL(2,\R) \overset{\tau}{\longhookrightarrow} \SL(3,\R)$, where $\sigma_0$ is injective and discrete and $\tau$ is the standard representation. This image is a topological circle in $\PP(\R^3)$ which has H\"older, but not Lipschitz, regularity.}
\label{fig:Barbot}
\end{figure}

\begin{figure}[ht!]
\begin{overpic}[scale=0.56,percent]{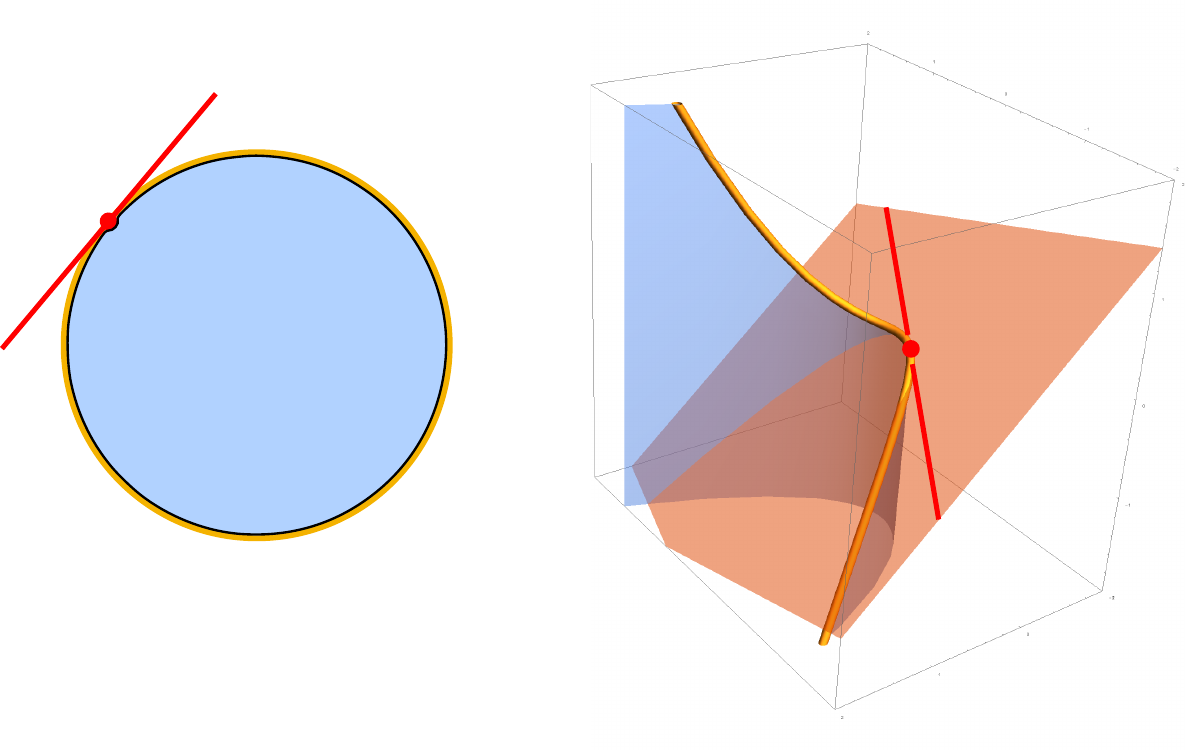}
\put(1,44){\textcolor{red}{$\xi_1(w)$}}
\put(16,57){\textcolor{red}{$\xi_2(w)$}}
\put(78,33){\textcolor{red}{$\xi_1(w)$}}
\put(76,43){\textcolor{red}{$\xi_2(w)$}}
\put(89,40){\textcolor{red}{$\xi_3(w)$}}
\end{overpic}
\vspace{-0.8cm}
\caption{If $\xi = (\xi_1,\dots,\xi_{d-1}) : \di\Gamma_0 \to \mathrm{Flags}(\R^d)$ is the boundary map of a Hitchin representation $\rho : \Gamma_0 = \pi_1(S)\to\PSL(d,\R)$, then the image of~$\xi_1$ is a $C^1$ curve in $\PP(\R^d)$, and $\xi(w)$ is the osculating flag to this curve at the point $\xi_1(w)$ for all $w\in\di\Gamma_0$.
For $d=3$ the curve is the boundary of the properly convex open subset of $\PP(\R^3)$ preserved by~$\rho$, while for $d=4$ the curve is homotopically nontrivial in $\PP(\R^4)$.
This figure shows the curve $\xi_1(\di\Gamma_0)$ and an osculating flag $\xi(w)$ when $\rho : \Gamma_0\to\PSL(2,\R)\hookrightarrow\PSL(d,\R)$ is Fuchsian, for $d=3$ (left) and $d=4$ (right); for $d=4$, the curve is the so-called \emph{twisted cubic} in $\PP(\R^4)$, given by $t\mapsto (t,t^2,t^3)$ in some affine chart.}
\label{fig:Hitchin-bound-map}
\end{figure}
    
  \item Section~\ref{subsec:higher-Teich}: All (known) higher Teichm\"uller spaces consist of Anosov representations (see \cite{bp,bilw05,glw,lab06}).
   
  \begin{rem}
  Not all Anosov representations of closed surface\linebreak groups belong to higher Teichm\"uller spaces.
  For instance, the Barbot representations of $\pi_1(S)$ into $\SL(d,\R)$ from Section~\ref{subsec:deform-Fuchsian-higher-rank} are $P_1$-Anosov, but their connected component in $\Hom(\pi_1(S),\SL(d,\R))$ contains representations that are not injective and discrete.
  \end{rem}
  
  \item Section~\ref{subsec:higher-Teich}: The two known families of higher-dimensional higher-rank Teichm\"uller spaces that we mentioned for Gromov hyperbolic groups $\Gamma_0 = \pi_1(M)$ are $P_1$-Anosov: for holonomies of convex projective structures, see \cite{ben04}, and for $\HH^{p,q}$-convex cocompact representations, see \cite{bm12} (case $q=1$) and \cite{dgk18,dgk-proj-cc} (general case).
\end{itemize}

\subsubsection{Interlude: eigenvalues and singular values}

Before giving (in Theorem~\ref{thm:charact-Ano} below) some characterisations of Anosov representations that generalise Theorem~\ref{thm:charact-cc-rank-1}, we introduce some notation and make a few preliminary observations.

\begin{notation} \label{not:lambda-mu}
For any $g\in\GL(d,\C)$, we denote by $\lambda_1(g) \geq \dots \geq \lambda_d(g)$ the logarithms of the moduli of the complex eigenvalues of~$g$, and by $\mu_1(g) \geq \dots \geq \mu_d(g)$ the logarithms of the \emph{singular values} of~$g$ (\ie of the square roots of the eigenvalues of $\overline{g}^T g$, which are positive numbers).
For any $1\leq i<j\leq d$, this defines functions $\lambda_i - \lambda_j : \GL(d,\C)\to\R_{\geq 0}$ and $\mu_i - \mu_j : \GL(d,\C)\to\R_{\geq 0}$ which factor through $\PGL(d,\C)$.
\end{notation}

As in Section~\ref{subsec:charact-cc-rank-1}, for any finitely generated group~$\Gamma_0$, we choose a finite generating subset of~$\Gamma_0$ and denote by $\mathrm{Cay}(\Gamma_0)$ the corresponding Cayley graph, with its metric $\mathtt{d}_{\mathrm{Cay}(\Gamma_0)}$.
We denote by $X = G/K$ the Riemannian symmetric space of~$G$, with its metric $\mathtt{d}_X$, and fix a basepoint $x_0\in X$.
We denote the translation length as in \eqref{eqn:transl-length}.

Our starting point is the following (see Theorem~\ref{thm:charact-cc-rank-1} and its proof).

\begin{rem} \label{rem:div-in-G/K}
Let $\Gamma_0$ be a finitely generated group and $\rho : \Gamma_0\to G$ a representation.
Then
\begin{itemize}
  \item $\rho$ has finite kernel and discrete image if and only if $\mathtt{d}_X(x_0,\rho(\gamma)\cdot x_0) \to +\infty$ as $\mathtt{d}_{\mathrm{Cay}(\Gamma_0)}(e,\gamma)\to +\infty$;
  \item $\rho$ is called a \emph{quasi-isometric embedding} if there exist $c,c'>0$ such that $\mathtt{d}_X(x_0,\rho(\gamma)\cdot x_0) \geq c\,\mathtt{d}_{\mathrm{Cay}(\Gamma_0)}(e,\gamma) - c'$ for all $\gamma\in\Gamma$;
  \item $\rho$ is called \emph{well-displacing} if there exist $c,c''>0$ such that\linebreak $\mathrm{transl}_X(\rho(\gamma)) \geq c\,\mathrm{transl}_{\mathrm{Cay}(\Gamma_0)}(\gamma) - c''$ for all $\gamma\in\Gamma$.
\end{itemize}
\end{rem}

(As in Remarks~\ref{rem:charact-cc-rank-1}, the inequality $\mathtt{d}_X(x_0,\rho(\gamma)\cdot x_0) \leq C\,\mathtt{d}_{\mathrm{Cay}(\Gamma_0)}(e,\gamma)$ always holds for $C := \max_{f\in F} \mathtt{d}_X(x_0,\rho(f)\cdot x_0)$; it implies that the inequality $\mathrm{transl}_X(\rho(\gamma)) \leq C\,\mathrm{transl}_{\mathrm{Cay}(\Gamma_0)}(\gamma)$ always holds too: see the proof of the implication \eqref{item:rk1-qi}~$\Rightarrow$~\eqref{item:rk1-displac} of Theorem~\ref{thm:charact-cc-rank-1}.)

We now reinterpret Remark~\ref{rem:div-in-G/K} using Notation~\ref{not:lambda-mu}.
Let $\Vert\cdot\Vert_{\mathrm{Euc}}$ be the standard Euclidean norm on~$\R^d$.
For $G = \PGL(d,\mathbb{K})$ with $\mathbb{K} = \R$ or~$\C$, we can take $K = \PO(d)$ or $\mathrm{PU}(d)$ and $x_0 = eK \in G/K = X$, so that for any $g\in G$ lifting to $\hat{g}\in\GL(d,\mathbb{K})$ with $|\det(\hat{g})|=1$,
\begin{align*}
\mathtt{d}_X(x_0,g\cdot x_0) & = \Vert(\mu_1(\hat{g}),\dots,\mu_d(\hat{g}))\Vert_{\mathrm{Euc}},\\
\mathrm{transl}_X(g) & = \Vert(\lambda_1(\hat{g}),\dots,\lambda_d(\hat{g}))\Vert_{\mathrm{Euc}}.
\end{align*}
On the other hand, we have $\sum_{i=1}^d \mu_i(\hat{g}) = \sum_{i=1}^d \lambda_i(\hat{g}) = 0$, and on the linear hyperplane $\{ v\in\R^d \,|\, \sum_{i=1}^d v_i = 0\}$ of~$\R^d$ the Euclidean norm $\Vert\cdot\Vert_{\mathrm{Euc}}$ is equivalent to $\sum_{i=1}^{d-1} |v_i - v_{i+1}|$.
In this setting we can therefore rewrite Remark~\ref{rem:div-in-G/K} as follows.

\begin{rem} \label{rem:div-mu-nu}
Let $\Gamma_0$ be a finitely generated group and $\rho : \Gamma_0\to G = \PGL(d,\mathbb{K})$ a representation.
Then
\begin{itemize}
  \item $\rho$ has finite kernel and discrete image if and only if\linebreak $\sum_{i=1}^{d-1} (\mu_i - \mu_{i+1})(\rho(\gamma)) \to +\infty$ as $\mathtt{d}_{\mathrm{Cay}(\Gamma_0)}(e,\gamma)\to +\infty$;
  \item $\rho$ is a quasi-isometric embedding if and only if there exist $c,c'>0$ such that $\sum_{i=1}^{d-1} (\mu_i - \mu_{i+1})(\rho(\gamma)) \geq c\,\mathtt{d}_{\mathrm{Cay}(\Gamma_0)}(e,\gamma) - c'$ for all $\gamma\in\Gamma$;
  \item $\rho$ is well-displacing if and only if there exist $c,c''>0$ such that $\sum_{i=1}^{d-1} (\lambda_i - \lambda_{i+1})(\rho(\gamma)) \geq c\,\mathrm{transl}_{\mathrm{Cay}(\Gamma_0)}(\gamma) - c''$ for all $\gamma\in\Gamma$.
\end{itemize}
\end{rem}

Remark~\ref{rem:div-mu-nu} should be kept in mind will reading Theorem~\ref{thm:charact-Ano}.\eqref{item:qi-i-direction}--\eqref{item:displac-i-direction} below, as it explains how Anosov representations are refinements of quasi-isometric embeddings and well-displacing representations.

\subsubsection{Characterisations}

The following characterisations of Anosov representations were established by Kapovich--Leeb--Porti, Gu\'eritaud--Guichard--Kassel--Wienhard, Bochi--Potrie--Sambarino, and Kassel--Potrie.
More precisely, \eqref{item:Ano}~$\Rightarrow$~\eqref{item:qi-i-direction} is easy and follows from a property of dominated splittings proved in \cite{bg09}.
The implication \eqref{item:qi-i-direction}~$\Rightarrow$~\eqref{item:Ano} was proved in \cite{klp-morse}, with an alternative proof later given in \cite{bps19}.
The implication \eqref{item:qi-i-direction}~$\Rightarrow$~\eqref{item:displac-i-direction} is easy and similar to the implication \eqref{item:rk1-qi}~$\Rightarrow$~\eqref{item:rk1-displac} of Theorem~\ref{thm:charact-cc-rank-1} (note that $\lambda_i(\hat{g}) = \lim_k \mu_i({\hat{g}}^k)/k$ for all $\hat{g}\in\GL(d,\mathbb{K})$).
The implication \eqref{item:displac-i-direction}~$\Rightarrow$~\eqref{item:qi-i-direction} was proved in \cite{kp22}.
The implications \eqref{item:Ano}~$\Leftrightarrow$~\eqref{item:i-bound-map-dyn-preserv}~$\Leftrightarrow$~\eqref{item:i-bound-map-strong-dyn-preserv} and \eqref{item:Ano}~$\Rightarrow$~\eqref{item:i-expand} were proved in \cite{ggkw17} and \cite{klp14}, and \eqref{item:i-expand}~$\Rightarrow$~\eqref{item:Ano} was proved in \cite{klp14}.
We refer to \cite{kl-msri,klp-survey} for further characterisations (\eg in terms of \emph{conical limit points}).

We fix a basepoint $x_0\in X = G/K$ and a Riemannian metric on $G/P_{i,d-i} = \mathrm{Flags}_{i,d-i}(\R^d) = \{(V_{\min(i,d-i)}\subset V_{\max(i,d-i)}) \,|\, \dim(V_{\bullet}) =\nolinebreak\bullet\}$.
See Definition~\ref{def:Ano} for the notions of transversality and dynamics-preser\-ving, and Theorem~\ref{thm:charact-cc-rank-1}.\eqref{item:rk1-expand} for expansion at the limit set.
The notion of limit set that we use is discussed in the next section.

\begin{thm} \label{thm:charact-Ano}
Let $G = \PGL(d,\mathbb{K})$ or $\SL^{\pm}(d,\mathbb{K})$ where $\mathbb{K} = \R$ or~$\C$, and let $1\leq i\leq d-1$.
Let $\Gamma_0$ be a finitely generated infinite group and $\rho : \Gamma_0\to G$ a representation.
Then the following are equivalent:
\begin{enumerate}[(1)]
  \item\label{item:Ano} $\Gamma_0$ is Gromov hyperbolic and $\rho$ is $P_i$-Anosov,
  \item\label{item:qi-i-direction} $\rho$ is a quasi-isometric embedding ``in the $i$-th direction'': there exist $c,c'>0$ such that for any $\gamma\in\Gamma$,
  $$(\mu_i - \mu_{i+1})(\rho(\gamma)) \geq c\,\mathtt{d}_{\mathrm{Cay}(\Gamma_0)}(e,\gamma) - c' ;$$
  \item\label{item:displac-i-direction} $\Gamma_0$ is Gromov hyperbolic and $\rho$ is well-displacing ``in the $i$-th direction'': there exist $c,c''>0$ such that for any $\gamma\in\Gamma$,
  $$(\lambda_i - \lambda_{i+1})(\rho(\gamma)) \geq c\,\mathrm{transl}_{\mathrm{Cay}(\Gamma_0)}(\gamma) - c'' ;$$
    \item\label{item:i-bound-map-dyn-preserv} $\Gamma_0$ is Gromov hyperbolic, there exist $\rho$-equivariant maps
    $$\xi_{\bullet} : \di\Gamma_0 \longrightarrow G/P_{\bullet} = \Gr_{\bullet}(\mathbb{K}^d),$$
    for $\bullet\in\{ i,d-i\}$, which are continuous, transverse, dynamics-preser\-ving, and $(\mu_i - \mu_{i+1})(\rho(\gamma)) \to +\infty$ as $\mathtt{d}_{\mathrm{Cay}(\Gamma_0)}(e,\gamma)\to +\infty$;
  \smallskip
  \item\label{item:i-bound-map-strong-dyn-preserv} $\Gamma_0$ is Gromov hyperbolic and there exist $\rho$-equivariant maps
  $$\xi_{\bullet} : \di\Gamma_0 \longrightarrow G/P_{\bullet} = \Gr_{\bullet}(\mathbb{K}^d),$$
    for $\bullet\in\{ i,d-i\}$, which are continuous, transverse, and \emph{strongly dynamics-preserving} (\ie for any $(\gamma_k)\in\Gamma_0^{\N}$ and $w^+,w^-\in\di\Gamma_0$, if $\gamma_k\cdot w\to w^+$ for all $w\in\di\Gamma_0\smallsetminus\{w^-\}$, then $\rho(\gamma_k)\cdot z\to\xi_i(w^+)$ for all $z\in G/P_i$ transverse to $\xi_{d-i}(w^-)\in G/P_{d-i}$);
  \smallskip
  \item\label{item:i-expand} $(\mu_i - \mu_{i+1})(\rho(\gamma)) \to +\infty$ as $\mathtt{d}_{\mathrm{Cay}(\Gamma_0)}(e,\gamma)\to +\infty$, any two points of the limit set of $\rho(\Gamma_0)$ in $G/P_{i,d-i}$ are transverse, and the action of $\Gamma_0$ on $G/P_{i,d-i}$ via~$\rho$ is expanding at this limit set.
\end{enumerate}
\end{thm}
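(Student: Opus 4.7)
The plan is to establish the equivalences via the cycle $(1) \Rightarrow (5) \Rightarrow (4) \Rightarrow (2) \Rightarrow (1)$, together with $(2) \Leftrightarrow (3)$ and $(1) \Leftrightarrow (6)$, treating the divergence of $(\mu_i - \mu_{i+1})(\rho(\gamma))$ as the unifying thread. All implications are modelled on the corresponding steps of Theorem~\ref{thm:charact-cc-rank-1}, but several of them become genuinely harder in higher rank because flag varieties are not $G$-homogeneous in the strong sense of Remark~\ref{rem:bound-G/K-rank-1}.

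I would first run the ``easy'' chain starting from~\eqref{item:Ano}. Assuming $\rho$ is $P_i$-Anosov, the boundary maps $\xi_i,\xi_{d-i}$ given by Definition~\ref{def:Ano} and Lemma~\ref{lem:basic-Ano} are continuous, injective, and transverse; to upgrade dynamics-preserving to \emph{strongly} dynamics-preserving as in~\eqref{item:i-bound-map-strong-dyn-preserv}, I would use the uniform contraction of Condition~\ref{cond:Ano} along flow lines ending at $w^+$ and $w^-$, exactly as in the last step of the proof of \eqref{item:rk1-cc}~$\Rightarrow$~\eqref{item:rk1-bound-map-strong-dyn-preserv} in Theorem~\ref{thm:charact-cc-rank-1}, via a Cartan-decomposition argument $\rho(\gamma_k)=\kappa_k a_k \kappa'_k$. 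The implication $(5)\Rightarrow(4)$ is then immediate up to proving the singular-value divergence, which follows by observing that the strongly dynamics-preserving property forbids $\rho(\gamma_k)$ from stabilising transversality in the $i/(d-i)$ Grassmannians, whence $(\mu_i-\mu_{i+1})(\rho(\gamma_k))\to+\infty$. For $(4)\Rightarrow(2)$, I would follow the Morse-lemma strategy of Kapovich--Leeb--Porti: boundary maps plus a divergence of singular gaps give ``Morse quasigeodesics'' in $X$, which on the Gromov-hyperbolic group $\Gamma_0$ promotes the pointwise divergence to a linear estimate. Finally, $(2)\Rightarrow(3)$ follows from $\lambda_i(\hat g) = \lim_k \mu_i(\hat g^k)/k$: applying the inequality of~\eqref{item:qi-i-direction} to $\gamma^k$, dividing by $k$ and using $\mathrm{length}^{\infty}_{\mathrm{Cay}(\Gamma_0)}(\gamma)\geq \mathrm{transl}_{\mathrm{Cay}(\Gamma_0)}(\gamma) - 8\delta$ (item~(i) in the proof of \eqref{item:rk1-qi}~$\Rightarrow$~\eqref{item:rk1-displac}) yields~\eqref{item:displac-i-direction}, once one knows $\Gamma_0$ is hyperbolic.

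The genuine obstacle, and the central technical step, is the reverse implication $(2)\Rightarrow(1)$: from the singular-value quasi-isometric embedding we must simultaneously extract Gromov hyperbolicity of $\Gamma_0$, the boundary maps $\xi_i,\xi_{d-i}$, and the dominated splitting of Condition~\ref{cond:Ano}. I would use the Bochi--Potrie--Sambarino approach: the hypothesis forces each long word $\gamma$ to have a well-defined ``Cartan attracting $i$-plane'' $U_i(\rho(\gamma))$, and the singular-gap growth makes these planes depend in a Lipschitz-contracting way on the suffix of the word along geodesic rays in $\mathrm{Cay}(\Gamma_0)$. This both provides candidate boundary maps (as limits along rays) and, via a classical contraction-implies-hyperbolicity criterion, forces $\mathrm{Cay}(\Gamma_0)$ to be Gromov hyperbolic. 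Transferring the contraction to the flat bundle $E^\rho$ over the flow space of~$\Gamma_0$ then yields the dominated splitting of Condition~\ref{cond:Ano}. The implication $(3)\Rightarrow(2)$ (Kassel--Potrie) would be handled in parallel by multiplying by a uniformly finite set $S\subset\Gamma_0$, as in the proof of \eqref{item:rk1-displac}~$\Rightarrow$~\eqref{item:rk1-qi} in Theorem~\ref{thm:charact-cc-rank-1}, to convert the eigenvalue estimate into a word-length estimate on $s\gamma$ and hence on $\gamma$.

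It remains to treat~\eqref{item:i-expand}. For $(1)\Rightarrow(6)$, the singular-value divergence is part of \eqref{item:i-bound-map-dyn-preserv}, transversality of any two distinct limit flags is Lemma~\ref{lem:basic-Ano}\eqref{item:basic-Ano-2}, and the expansion at the limit set in $G/P_{i,d-i}$ follows from the attracting North--South dynamics of proximal elements $\rho(\gamma)$, pulled back to a neighbourhood using a Cartan decomposition as in the proof of \eqref{item:rk1-cc}~$\Rightarrow$~\eqref{item:rk1-expand}. For the converse $(6)\Rightarrow(1)$, I would mimic the Dirichlet-domain argument of \eqref{item:rk1-expand}~$\Rightarrow$~\eqref{item:rk1-cc}: the transversality of the limit set in $G/P_{i,d-i}$ lets one split it into $\xi_i(\Lambda)$ and $\xi_{d-i}(\Lambda)$, and the expansion guarantees a cocompactness property on a suitable convex hull in $X$ (in the sense of Kapovich--Leeb--Porti); combined with the singular-gap divergence this forces $\rho$ to be Anosov, after which Gromov hyperbolicity of $\Gamma_0$ is recovered as in the previous paragraph. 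The hard part throughout will be the higher-rank substitute for the Morse lemma and the passage from pointwise to uniform contraction, which is exactly where the Bochi--Potrie--Sambarino and Kapovich--Leeb--Porti machinery does the heavy lifting.
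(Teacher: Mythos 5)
Your plan is consistent with how the paper treats this theorem: the paper gives no self-contained proof but attributes each implication to \cite{bg09,klp-morse,bps19,kp22,ggkw17,klp14}, and your assignment of the easy steps (the descending chain from \eqref{item:Ano}, and \eqref{item:qi-i-direction}~$\Rightarrow$~\eqref{item:displac-i-direction} via $\lambda_i(\hat g)=\lim_k\mu_i(\hat g^k)/k$) versus the hard ones (\eqref{item:qi-i-direction}~$\Rightarrow$~\eqref{item:Ano} via Bochi--Potrie--Sambarino or the Kapovich--Leeb--Porti Morse lemma, \eqref{item:i-expand}~$\Rightarrow$~\eqref{item:Ano} via \cite{klp14}) matches the sources the paper points to. Organising the equivalences as a cycle $(1)\Rightarrow(5)\Rightarrow(4)\Rightarrow(2)\Rightarrow(1)$ rather than the paper's hub-and-spoke around \eqref{item:Ano} and \eqref{item:qi-i-direction} is a cosmetic difference.

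There is, however, one concrete gap: your description of \eqref{item:displac-i-direction}~$\Rightarrow$~\eqref{item:qi-i-direction} as being ``handled in parallel'' to the proof of \eqref{item:rk1-displac}~$\Rightarrow$~\eqref{item:rk1-qi} in Theorem~\ref{thm:charact-cc-rank-1} does not work. That rank-one argument has two ingredients: the finite set $S$ with $\mathrm{transl}_{\mathrm{Cay}(\Gamma_0)}(s\gamma)\geq\mathtt{d}_{\mathrm{Cay}(\Gamma_0)}(e,\gamma)-C'$, \emph{and} the pointwise inequality $\mathrm{transl}_X(g)\leq\mathtt{d}_X(x_0,g\cdot x_0)$, which converts a lower bound on a translation length into a lower bound on an orbital distance. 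The ``$i$-th direction'' analogue of the latter would be $(\lambda_i-\lambda_{i+1})(g)\leq(\mu_i-\mu_{i+1})(g)$, and this is false for $1<i<d$: eigenvalue and singular-value vectors of a matrix in $\GL(d,\R)$ are constrained only by Weyl's majorisation inequalities $\sum_{j\leq i}\lambda_j\leq\sum_{j\leq i}\mu_j$, which control partial sums but not individual gaps, so one can have $(\lambda_i-\lambda_{i+1})(g)$ much larger than $(\mu_i-\mu_{i+1})(g)$. This failure is exactly why the implication is a substantial theorem of \cite{kp22} (whose proof goes through products of proximal matrices and a finite coset argument of a quite different nature) rather than a formal adaptation of the rank-one proof; the paper's pointer to \cite[\S\,4.4]{kp22} for the necessity of the hyperbolicity hypothesis in \eqref{item:displac-i-direction} is another sign of its delicacy. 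Since you do cite Kassel--Potrie, the statement you rely on is correct, but the mechanism you propose for it would not produce a proof.
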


\begin{rem} \label{rem:compare-charact-Ano-cc}
Recall that when $\Rrank(G)=1$ (\ie $d=2$), an infinite discrete subgroup of~$G$ is Anosov if and only if it is convex cocompact in the classical sense of Definition~\ref{def:cc-gf-rank-1}.
In that case, the flag variety $G/P_i = G/P_{d-i}$ identifies with the visual boundary $\di X$ of $X = G/K$ (Remark~\ref{rem:bound-G/K-rank-1}) and conditions \eqref{item:qi-i-direction}, \eqref{item:displac-i-direction}, \eqref{item:i-bound-map-dyn-preserv}, \eqref{item:i-bound-map-strong-dyn-preserv}, \eqref{item:i-expand} of Theorem~\ref{thm:charact-Ano} are the same as conditions \eqref{item:rk1-qi}, \eqref{item:rk1-displac}, \eqref{item:rk1-bound-map-dyn-preserv}, \eqref{item:rk1-bound-map-strong-dyn-preserv}, \eqref{item:rk1-expand} of Theorem~\ref{thm:charact-cc-rank-1} (see Remark~\ref{rem:div-mu-nu} and Lemma~\ref{lem:basic-Ano}.\eqref{item:basic-Ano-3}).
On the other hand, when $\Rrank(G)\geq\nolinebreak 2$, conditions \eqref{item:qi-i-direction} and~\eqref{item:displac-i-direction} of  Theorem~\ref{thm:charact-Ano} are strictly stronger than conditions \eqref{item:rk1-qi} and~\eqref{item:rk1-displac} of Theorem~\ref{thm:charact-cc-rank-1} (see Remark~\ref{rem:div-mu-nu}).
\end{rem}

As in Remarks~\ref{rem:charact-cc-rank-1}, in condition~\eqref{item:displac-i-direction} we cannot remove the assumption that $\Gamma_0$ be Gromov hyperbolic.
See \cite[\S\,4.4]{kp22} for further discussion.

\subsubsection{Limit sets}

We now explain the notion of limit set used in Theorem~\ref{thm:charact-Ano}.\eqref{item:i-expand}.
It is based on an important decomposition of the noncompact semisimple Lie group~$G$: the \emph{Cartan decomposition} $G = K\exp(\mathfrak{a}^+)K$.
We refer to \cite{hel01} for the general theory for noncompact semisimple Lie groups~$G$.
For $G = \PGL(d,\mathbb{K})$, as in Remark~\ref{rem:div-mu-nu}, we can take $K = \PO(d)$ or $\mathrm{PU}(d)$, and $\mathfrak{a}^+$ to be the set of diagonal matrices in $\mathfrak{g} = \{ y\in M_d(\mathbb{K}) \,|\, \mathrm{tr}(y)=0\}$ whose entries $t_1,\dots,t_d\in\R$ are in nonincreasing order, with $t_1+\dots+t_d=0$; the Cartan decomposition can then be stated as follows.

\begin{fact} \label{fact:Cartan-decomp}
Any $g\in\PGL(d,\mathbb{K})$ can be written as $g = \kappa\exp(a)\kappa'$ for some $\kappa,\kappa'\in K$ and a unique $a\in\mathfrak{a}^+$; the entries of~$a$ are $\mu_1(\hat{g}),\dots,\mu_d(\hat{g})$ (see Notation~\ref{not:lambda-mu}) where $\hat{g}\in\GL(d,\mathbb{K})$ is any lift of~$g$ with\linebreak $|\det(\hat{g})|= 1$.
\end{fact}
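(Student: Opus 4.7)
The plan is to reduce the statement to the classical singular value decomposition for matrices, applied to an appropriate lift $\hat{g} \in \GL(d,\mathbb{K})$ of $g$.

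First, I would fix a lift $\hat{g} \in \GL(d,\mathbb{K})$ of $g$ with $|\det(\hat{g})| = 1$, and consider the matrix $\hat{g}^{*}\hat{g}$ (where $^{*}$ denotes the conjugate transpose). This matrix is Hermitian positive definite, so by the spectral theorem there exists $\kappa' \in \mathrm{O}(d)$ (resp.\ $\mathrm{U}(d)$) such that $\kappa'^{-1}\hat{g}^{*}\hat{g}\kappa' = D^{2}$, where $D$ is a diagonal matrix with positive real entries $s_{1} \geq \dots \geq s_{d} > 0$ arranged in nonincreasing order. By definition, these $s_{i}$ are the singular values of $\hat{g}$, so $\log s_{i} = \mu_{i}(\hat{g})$. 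Moreover the ordering and positivity ensure that $a := \mathrm{diag}(\log s_{1},\dots,\log s_{d})$ lies in $\mathfrak{a}^{+}$, with $\mathrm{tr}(a) = \log|\det(\hat{g})| = 0$.

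Next, I would set $\kappa := \hat{g}\kappa' D^{-1}$ and verify that $\kappa \in \mathrm{O}(d)$ (resp.\ $\mathrm{U}(d)$) by a direct computation: $\kappa^{*}\kappa = D^{-1}\kappa'^{-1}\hat{g}^{*}\hat{g}\kappa' D^{-1} = D^{-1}D^{2}D^{-1} = \mathrm{I}$. Rearranging yields $\hat{g} = \kappa \exp(a)\,\kappa'^{-1}$, and after projecting to $\PGL(d,\mathbb{K})$ and renaming $\kappa'^{-1}$ as $\kappa'$, we obtain the desired decomposition $g = \kappa\exp(a)\kappa'$ with $\kappa,\kappa' \in K$ and $a \in \mathfrak{a}^{+}$ having entries $\mu_{i}(\hat{g})$.

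For uniqueness of $a$, observe that if $g = \kappa_{1}\exp(a_{1})\kappa_{1}' = \kappa_{2}\exp(a_{2})\kappa_{2}'$ with $a_{1},a_{2} \in \mathfrak{a}^{+}$, then on lifts we have $\hat{g}^{*}\hat{g} = (\kappa_{1}')^{-1}\exp(2a_{1})\kappa_{1}' = (\kappa_{2}')^{-1}\exp(2a_{2})\kappa_{2}'$ (up to a unimodular scalar arising from the choice of lift, which does not affect moduli of eigenvalues), so $\exp(2a_{1})$ and $\exp(2a_{2})$ are conjugate under $K$ and hence share the same multiset of diagonal entries. Since both $a_{1}$ and $a_{2}$ have their entries in nonincreasing order in $\mathfrak{a}^{+}$, we conclude $a_{1} = a_{2}$. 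The identification of these entries with the $\mu_{i}(\hat{g})$ is then immediate from the construction, and is independent of the choice of lift $\hat{g}$ with $|\det(\hat{g})|=1$ since any two such lifts differ by a complex scalar of modulus $1$, which leaves singular values invariant.

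The only slightly delicate point — and the one I would be most careful about — is the passage between $\GL(d,\mathbb{K})$ and $\PGL(d,\mathbb{K})$, ensuring that the normalisation $|\det(\hat{g})|=1$ makes $a$ traceless (hence in $\mathfrak{g}$) and that the remaining ambiguity in the lift does not affect the $\mu_{i}(\hat{g})$. Everything else is a direct application of the singular value decomposition.
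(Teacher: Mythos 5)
Your proposal is correct and follows essentially the same route as the paper, which simply invokes the polar decomposition of $\hat{g}$ together with the orthogonal (resp.\ unitary) diagonalisation of the positive definite factor --- i.e.\ exactly the singular value decomposition you construct by hand from $\hat{g}^{*}\hat{g}$. Your write-up is more detailed than the paper's two-line sketch (in particular you also verify the uniqueness of $a$ and the tracelessness coming from $|\det(\hat{g})|=1$, which the paper leaves implicit), but there is no difference in method.
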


\begin{proof}
By the polar decomposition, any element of $\GL(d,\R)$ (\resp\linebreak $\GL(d,\C)$) can be written as the product of an orthogonal (\resp unitary) matrix and a positive semi-definite real symmetric (\resp Hermitian) matrix; on the other hand, any real symmetric (\resp Hermitian) matrix can be diagonalised by an orthogonal (\resp unitary) matrix.
\end{proof}

Here is a useful consequence of the Cartan decomposition.

\begin{lem} \label{lem:conv-G/P}
For $1\leq i\leq d-1$ and a sequence $(g_m)$ of points of $G = \PGL(d,\mathbb{K})$, consider the following two conditions:
\begin{enumerate}[(a)]
  \item\label{item:conv-G/P-1} $(\mu_i - \mu_{i+1})(g_m)\to +\infty$,
  \item\label{item:conv-G/P-2} there exist $z^+\in G/P_i$ and $z^-\in G/P_{d-i}$ such that $g_m\cdot z\to z^+$ for all $z\in G/P_i$ transverse to~$z^-$.
\end{enumerate}
If $(g_m)$ satisfies (\ref{item:conv-G/P-1}), then some subsequence of $(g_m)$ satisfies (\ref{item:conv-G/P-2}).
Conversely, if $(g_m)$ satisfies (\ref{item:conv-G/P-2}), then it satisfies (\ref{item:conv-G/P-1}).
\end{lem}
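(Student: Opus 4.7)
The plan rests on the Cartan decomposition $g_m = \kappa_m \exp(a_m) \kappa'_m$ from Fact~\ref{fact:Cartan-decomp}, together with the fact that the diagonal element $\exp(a_m)$ acts on $\Gr_i(\mathbb{K}^d)$ with a North-South dynamics whose strength is measured by the gap $\mu_i^m - \mu_{i+1}^m$. Throughout, the argument will work in the affine chart of $\Gr_i(\mathbb{K}^d)$ consisting of graphs $V_A := \{(y, A y) : y \in \mathbb{K}^i\}$ of linear maps $A : \mathbb{K}^i \to \mathbb{K}^{d-i}$, which parametrises exactly the $i$-planes transverse to $e^- := \mathrm{span}(e_{i+1}, \ldots, e_d)$.

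To prove \eqref{item:conv-G/P-1}~$\Rightarrow$~\eqref{item:conv-G/P-2}, I will first use compactness of $K$ to pass to a subsequence along which $\kappa_m \to \kappa$ and $\kappa'_m \to \kappa'$, and set $z^+ := \kappa \cdot \mathrm{span}(e_1, \ldots, e_i)$ and $z^- := (\kappa')^{-1} \cdot e^-$. For any $z$ transverse to $z^-$, the plane $\kappa'_m \cdot z$ converges to $\kappa' \cdot z$, which is transverse to $e^-$ and hence lies in the chart $\{V_A\}$ for large $m$; writing it as $V_{A^m}$ with $A^m \to A$ bounded, its image under $\exp(a_m)$ is $V_{B^m}$ where $B^m_{k,l} = e^{\mu_{i+k}^m - \mu_l^m} A^m_{k,l}$. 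Each exponent is bounded above by $\mu_{i+1}^m - \mu_i^m \to -\infty$ under~\eqref{item:conv-G/P-1}, so $B^m \to 0$, and hence $g_m \cdot z = \kappa_m \exp(a_m) V_{A^m} \to \kappa \cdot \mathrm{span}(e_1, \ldots, e_i) = z^+$.

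For the converse \eqref{item:conv-G/P-2}~$\Rightarrow$~\eqref{item:conv-G/P-1}, the idea is to argue by contradiction. Assuming a subsequence $(g_{m_k})$ with $(\mu_i - \mu_{i+1})(g_{m_k}) \leq M$, one can pass further so that $\kappa_{m_k} \to \kappa$, $\kappa'_{m_k} \to \kappa'$, and each ratio $e^{\mu_{i+k'}^{m_k} - \mu_l^{m_k}} \in [0, 1]$ (with $k' \geq 1$ and $l \leq i$) converges to some $\beta_{k',l}$, crucially with $\beta_{1, i} \geq e^{-M} > 0$. The next step is to choose $A_0 \in \mathrm{Hom}(\mathbb{K}^i, \mathbb{K}^{d-i})$ such that $V_{A_0}$ is transverse both to $e^-$ and to $\kappa' \cdot z^-$, and then a nonzero $\varepsilon$ small enough that $V_{A_0 + \varepsilon E_{1,i}}$ still satisfies both transversality conditions (here $E_{1,i}$ denotes the elementary matrix with a single nonzero entry in position $(1, i)$). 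The graph computation of the previous paragraph, applied along the chosen subsequence, then yields $g_{m_k} \cdot (\kappa')^{-1} V_{A_0} \to \kappa \cdot V_{B^{\infty}}$ and $g_{m_k} \cdot (\kappa')^{-1} V_{A_0 + \varepsilon E_{1,i}} \to \kappa \cdot V_{B^{\infty} + \beta_{1,i} \varepsilon E_{1,i}}$, where $B^{\infty}_{k',l} = \beta_{k',l} (A_0)_{k',l}$; these two limits differ because $\beta_{1,i}\varepsilon \neq 0$, which contradicts~\eqref{item:conv-G/P-2}, since that condition forces both sequences to converge to $z^+$.

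The main technical obstacle is securing the pair $(A_0, A_0 + \varepsilon E_{1,i})$ with simultaneous transversality to both $e^-$ and $\kappa' \cdot z^-$. This reduces to showing that the set of $A$ with $V_A$ transverse to $\kappa' \cdot z^-$ is a nonempty Zariski-open subset of $\mathrm{Hom}(\mathbb{K}^i, \mathbb{K}^{d-i})$, which in turn follows from the general fact that for any two $(d-i)$-planes in $\mathbb{K}^d$ there exist $i$-planes transverse to both. Once this is in hand, openness guarantees the $E_{1,i}$-perturbation, and the proof closes.
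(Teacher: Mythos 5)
Your proof is correct and follows essentially the same route as the paper: Cartan decomposition $g_m=\kappa_m\exp(a_m)\kappa'_m$, compactness of $K$ to extract convergent $\kappa_m,\kappa'_m$, and the (anti-)contraction behaviour of the diagonal part on the chart of $i$-planes transverse to $\mathrm{span}(e_{i+1},\dots,e_d)$. The only difference is that you make explicit, via the graph computation $B^m_{k,l}=e^{\mu^m_{i+k}-\mu^m_l}A^m_{k,l}$ and the two test planes $V_{A_0}$, $V_{A_0+\varepsilon E_{1,i}}$, the steps the paper compresses into ``$a_m\cdot z\to z_0^+$'' and ``one easily sees that the image by $a_m$ of any open subset of $G/P_i$ fails to converge to a point''.
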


\begin{proof}
Let $z_0^+ := \mathrm{span}(e_1,\dots,e_i) \in G/P_i$ and $z_0^- := \mathrm{span}(e_{i+1},\dots,e_d)\linebreak \in G/P_{d-i}$, where $(e_1,\dots,e_d)$ is the canonical basis of~$\mathbb{K}^d$.
By Fact~\ref{fact:Cartan-decomp}, for any~$m$ we can write $g_m = \kappa_m \exp(a_m) \kappa'_m$ where $\kappa_m,\kappa'_m\in K$ and $a_m\in\mathfrak{a}^+$ is diagonal; the entries of~$a_m$ are $\mu_1(\hat{g}_m),\dots,\mu_d(\hat{g}_m)$ where $\hat{g}_m\in\GL(d,\mathbb{K})$ is any lift of~$g_m$ with $|\det(\hat{g}_m)|=1$.

\eqref{item:conv-G/P-1}~$\Rightarrow$~\eqref{item:conv-G/P-2}: If $(\mu_i - \mu_{i+1})(g_m)\to +\infty$, then $a_m\cdot z\to z_0^+$ for all $z\in G/P_i$ transverse to~$z_0^-$.
Since $K$ is compact, up to passing to a subsequence, we may assume that $(\kappa_m),(\kappa'_m)$ converge respectively to some $\kappa,\kappa'\in K$.
Then $g_m\cdot z\to z^+ := \kappa\cdot z_0^+$ for all $z\in G/P_i$ transverse to $z^- := {\kappa'}^{-1}\cdot z_0^-$.

\eqref{item:conv-G/P-2}~$\Rightarrow$~\eqref{item:conv-G/P-1}: If $(\mu_i - \mu_{i+1})(g_m)$ does not tend to $+\infty$, then up to passing to a subsequence it converges to some nonnegative real number, and one easily sees that the image by $a_m$ of any open subset of $G/P_i$ fails to converge to a point.
Up to passing to a subsequence, we may assume that $(\kappa_m),(\kappa'_m)$ converge in~$K$.
Then the image by $g_m$ of any open subset of $G/P_i$ fails to converge to a point.
\end{proof}

For $(g_m)$ and $z^+$ as in condition~\eqref{item:conv-G/P-2} of Lemma~\ref{lem:conv-G/P}, we say that $z^+$ is a \emph{contraction point} for $(g_m)$ in $G/P_i$.
We then define the \emph{limit set in $G/P_i$} of a discrete subgroup $\Gamma$ of~$G$ to be the set of contraction points in $G/P_i$ of sequences of elements of~$\Gamma$.
It is a closed $\Gamma$-invariant subset of $G/P_i$.
When $\Gamma$ is $P_i$-Anosov, it coincides with the proximal limit set of $\Gamma$ in $G/P_i$, which is also the image of the boundary map $\xi_i : \di\Gamma\to G/P_i$.

Similarly, sequences $(g_m)\in G^{\N}$ satisfying both $(\mu_i - \mu_{i+1})(g_m)\to +\infty$ and $(\mu_{d-i} - \mu_{d-i+1})(g_m)\to +\infty$ define contraction points in $G/P_{i,d-i} = \mathrm{Flags}_{i,d-i}(\R^d)$.
This gives a notion of \emph{limit set in $G/P_{i,d-i}$} of a discrete subgroup $\Gamma$ of~$G$, as considered in Theorem~\ref{thm:charact-Ano}.\eqref{item:i-expand}.

We note that in the setting of Theorem~\ref{thm:charact-Ano}.\eqref{item:i-expand}, the limit set of $\rho(\Gamma_0)$ in $G/P_{i,d-i}$ is nonempty.
Indeed, $(\mu_{d-i} - \mu_{d-i+1})(g) = (\mu_i - \mu_{i+1})(g^{-1})$ for all $g\in G$, and so $(\mu_i - \mu_{i+1})(\rho(\gamma)) \to +\infty$ as $\mathtt{d}_{\mathrm{Cay}(\Gamma_0)}(e,\gamma)\to +\infty$ implies $(\mu_{d-i} - \mu_{d-i+1})(\rho(\gamma)) \to +\infty$ as $\mathtt{d}_{\mathrm{Cay}(\Gamma_0)}(e,\gamma)\to +\infty$.

\subsubsection{Cocompact domains of discontinuity}

We end this section by briefly mentioning a generalisation to Anosov representations of a nice feature of rank-one convex cocompact representations.
Namely, we have seen in Section~\ref{subsec:cc-open-rank-1} that for $\Rrank(G) = 1$, if $X = G/K$ denotes the Riemannian symmetric space of~$G$, then any convex cocompact subgroup $\Gamma$ of~$G$ acts properly discontinuously, with compact quotient, on the open subset $\Omega_{\Gamma} := \di X \smallsetminus \Lambda_{\Gamma}$ of $\di X$.
In that case, $\di X$ is the unique flag variety $G/P$ of~$G$ with $P$ a proper parabolic subgroup of~$G$ (Remark~\ref{rem:bound-G/K-rank-1}).

Guichard and Wienhard \cite{gw12}, inspired by work of Frances, generalised this picture to show that in certain situations, for certain proper parabolic subgroups $P$ and $Q$ of~$G$, any $P$-Anosov subgroup $\Gamma$ of~$G$ acts properly discontinuously, with compact quotient, on some open subset $\Omega$ of $G/Q$ which is obtained by removing all points of $G/Q$ that are ``not transverse enough'' (in some precise sense) to the limit set of $\Gamma$ in $G/P$.
This phenomenon was then investigated and described in full generality by Kapovich, Leeb, and Porti \cite{klp18}.
Let us give one concrete example.

\begin{ex} \label{ex:DoD-SO-Sp}
Let $b$ be a nondegenerate symmetric bilinear form on~$\R^d$ with noncompact automorphism group $G := \mathrm{Aut}(b) \subset \SL^{\pm}(d,\R)$.
(If $b$ is symmetric, then $G = \OO(p,q)$ for some $p,q\geq 1$; we require $p$ and~$q$ to be distinct.
If $b$ is skew-symmetric, then $d = 2n$ is even and $G = \Sp(2n,\R)$.)
Let $\Gamma_0$ be an infinite Gromov hyperbolic group, $\rho : \Gamma_0\to G\subset\SL^{\pm}(d,\R)$ a $P_1$-Anosov representation, and $\Lambda_{\rho(\Gamma_0)}$ the limit set of $\rho(\Gamma_0)$ in $\PP(\R^d)$.
Let $\mathcal{L}$ be the space of maximal $b$-isotropic subspaces of~$\R^d$.
(It identifies with $G/Q$ where $Q$ is the stabiliser in~$G$ of a maximal $b$-isotropic subspace of~$\R^d$.)
Then $\Gamma_0$ acts properly discontinuously with compact quotient, via~$\rho$, on
$$\Omega_{\rho(\Gamma_0)} := \mathcal{L} \smallsetminus \bigcup_{z\in\Lambda_{\rho(\Gamma_0)}} \mathcal{L}_z,$$
where $\mathcal{L}_z$ is the set of maximal $b$-isotropic subspaces of~$\R^d$ that contain the line~$z$.
\end{ex}

When $b$ is skew-symmetric, \ie $G = \Sp(2n,\R)$, the set $\mathcal{L}$ is the space $\mathrm{Lag}(\R^{2n})$ of \emph{Lagrangians} of~$\R^{2n}$.
In this setting, if $\rho(\Gamma_0)$ is a ``strong'' crooked Schottky group as in Section~\ref{subsec:ping-pong-higher-rank}, defined by $B_1^{\pm},\dots,B_m^{\pm}$ with pairwise disjoint closures, then the set $\Omega_{\rho(\Gamma_0)}$ of Example~\ref{ex:DoD-SO-Sp} coincides with the set $\Omega = \mathrm{Int}(\bigcup_{\gamma\in\Gamma_0} \rho(\gamma)\cdot\mathcal{D}) \subset \mathrm{Lag}(\R^{2n})$ of Section~\ref{subsec:ping-pong-higher-rank}.

\subsection{Anosov representations and convex cocompactness} \label{subsec:Ano-cc}

Recall that when $\Rrank(G)=1$, Anosov representations coincide with convex cocompact representations in the classical sense of Definition~\ref{def:cc-gf-rank-1}.
When $\Rrank(G)\geq 2$, Theorem~\ref{thm:charact-Ano} shows that Anosov representations have a number of similarities, in terms of their dynamics, with rank-one convex cocompact representations: see Remark~\ref{rem:compare-charact-Ano-cc}.
Another similarity, of a more geometric nature, is the existence of cocompact domains of discontinuity as in Example~\ref{ex:DoD-SO-Sp}: given an Anosov representation $\rho : \Gamma_0\to G$, such a domain of discontinuity $\Omega \subset G/Q$ yields, by taking the quotient, a closed manifold $\rho(\Gamma_0)\backslash\Omega$ locally modeled on $G/Q$, whose geometry can be quite interesting (see \cite[\S\,5]{wie-icm}).
These manifolds $\rho(\Gamma_0)\backslash\Omega$ do not satisfy any kind of convexity properties in general.

Given these similarities, it is natural to wonder if Anosov representations could also be characterised geometrically in terms of some suitable notion of convex cocompactness.
We will see below that this is indeed the case.
This will give more geometric intuition about Anosov representations, and yield new examples constructed geometrically.

\subsubsection{Two attempts}

Our starting point is the following special case of Fact~\ref{fact:cc-embed-Ano}.

\begin{fact} \label{guiding-fact}
Let $\Gamma_0$ be an infinite group and $\rho : \Gamma_0\to\PO(n,1)=\mathrm{Isom}(\HH^n)$ a representation.
Then $\rho$ is convex cocompact (Definition~\ref{def:cc-gf-rank-1}) if and only if $\Gamma_0$ is Gromov hyperbolic and $\rho : \Gamma_0\to\PO(n,1)\hookrightarrow\PGL(n+1,\R)$ is $P_1$-Anosov.
\end{fact}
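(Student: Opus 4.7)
The plan is to reduce both implications to the quasi-isometric embedding characterisations: Theorem \ref{thm:charact-cc-rank-1}.(1)$\Leftrightarrow$(2) on the rank-one side, and Theorem \ref{thm:charact-Ano}.(1)$\Leftrightarrow$(2) with $i=1$ on the Anosov side. The bridge is a direct identification, on the image of the standard inclusion $\tau : \PO(n,1) \hookrightarrow \PGL(n+1,\R)$, of the singular-value gap $\mu_1-\mu_2$ with hyperbolic displacement.

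First I would establish the key identity
$$(\mu_1 - \mu_2)(\tau(g)) \;=\; \mathtt{d}_{\HH^n}(x_0, g \cdot x_0) \qquad \forall\, g \in \PO(n,1),$$
for a suitable basepoint $x_0 \in \HH^n$ and a fixed normalisation of the symmetric-space metric of $\PGL(n+1,\R)$. This will follow from the $KAK$-decomposition of the rank-one group $\PO(n,1)$: any $g$ writes as $g = k_1 a(t) k_2$ where $k_1, k_2$ belong to the stabiliser of $x_0$ in $\tau(\PO(n,1))$ (which sits inside $\PO(n+1)$), and $a(t)$ is a pure hyperbolic translation of length $t = \mathtt{d}_{\HH^n}(x_0, g \cdot x_0)$ along a fixed geodesic through $x_0$. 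In a null basis of $\R^{n+1}$ adapted to this geodesic, $\tau(a(t))$ becomes $\mathrm{diag}(e^t, 1, \ldots, 1, e^{-t})$, whose singular-value gap $\mu_1 - \mu_2$ is exactly $t$.

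With the key identity in hand both directions are almost automatic. For ($\Rightarrow$), if $\rho$ is convex cocompact then Theorem \ref{thm:charact-cc-rank-1} gives that $\Gamma_0$ is Gromov hyperbolic and produces $c, c' > 0$ with $\mathtt{d}_{\HH^n}(x_0, \rho(\gamma) \cdot x_0) \geq c\,\mathtt{d}_{\mathrm{Cay}(\Gamma_0)}(e, \gamma) - c'$ for all $\gamma \in \Gamma_0$; the key identity converts this into the inequality $(\mu_1 - \mu_2)(\tau \circ \rho(\gamma)) \geq c\,\mathtt{d}_{\mathrm{Cay}(\Gamma_0)}(e, \gamma) - c'$ of Theorem \ref{thm:charact-Ano}.(2), which in turn shows $\tau \circ \rho$ is $P_1$-Anosov. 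For ($\Leftarrow$), if $\Gamma_0$ is Gromov hyperbolic and $\tau \circ \rho$ is $P_1$-Anosov, Theorem \ref{thm:charact-Ano}.(1)$\Rightarrow$(2) supplies the $P_1$ inequality, the key identity turns it into a quasi-isometric embedding $\Gamma_0 \to \PO(n,1)$, and Theorem \ref{thm:charact-cc-rank-1}.(2)$\Rightarrow$(1) concludes that $\rho$ is convex cocompact.

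The whole argument is a clean translation modulo the two characterisation theorems, which do the hard work. The only genuine input is the key identity, and this in turn is a direct consequence of the fact that $\Rrank(\PO(n,1)) = 1$: the Cartan projection of $\tau(\PO(n,1))$ takes values on the single ray $\R_{\geq 0} \cdot (1, 0, \ldots, 0, -1)$ in the Weyl chamber of $\PGL(n+1,\R)$. I do not expect any real obstacle; the only minor care needed is in choosing compatible basepoints and metric normalisations, and this affects neither of the underlying quasi-isometric embedding statements.
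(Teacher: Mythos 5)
Your proof is correct, but it takes a different route from the one the paper intends. The paper presents Fact~\ref{guiding-fact} as a direct specialisation of Fact~\ref{fact:cc-embed-Ano} (the Guichard--Wienhard statement on compositions $\tau\circ\sigma_0$ with $\Rrank(G')=1$): one takes $G'=\PO(n,1)$, $\tau$ the standard inclusion, $i=1$, observes that the eigenvalue condition $(\lambda_1-\lambda_2)(\tau(g'))>0$ is automatic for a hyperbolic $g'\in\PO(n,1)$ under the standard inclusion, and notes that convex cocompactness forces $\Gamma_0$ to be Gromov hyperbolic so that Fact~\ref{fact:cc-embed-Ano} applies. You instead bypass Fact~\ref{fact:cc-embed-Ano} entirely and splice together Theorem~\ref{thm:charact-cc-rank-1}.\eqref{item:rk1-cc}$\Leftrightarrow$\eqref{item:rk1-qi} and Theorem~\ref{thm:charact-Ano}.\eqref{item:Ano}$\Leftrightarrow$\eqref{item:qi-i-direction} through the identity $(\mu_1-\mu_2)(\tau(g))=\mathtt{d}_{\HH^n}(x_0,g\cdot x_0)$, which is exactly the statement that the Cartan projection of $\tau(\PO(n,1))$ lies on a single ray of the Weyl chamber; this identity is correct (in the orthonormal basis diagonalising the form of signature $(n,1)$, the $KAK$-decomposition of $\PO(n,1)$ maps to a genuine singular value decomposition in $\PGL(n+1,\R)$, with singular values $e^t,1,\dots,1,e^{-t}$). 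What your route buys is transparency about where the rank-one hypothesis enters and a uniform treatment of both directions; what it costs is reliance on the deep implication \eqref{item:qi-i-direction}$\Rightarrow$\eqref{item:Ano} of Theorem~\ref{thm:charact-Ano} (Kapovich--Leeb--Porti, Bochi--Potrie--Sambarino) in the forward direction, whereas the original proof of Fact~\ref{fact:cc-embed-Ano} in \cite{gw12} predates those results and proceeds by pushing the rank-one boundary map and contraction data directly into the flag variety. Both arguments are legitimate given the toolbox the paper has already assembled; the only points to tidy in yours are the basis bookkeeping around the key identity (singular values are computed in the orthonormal basis, not the null basis, though the two are related by an orthogonal change of coordinates) and the explicit remark that finite generation of $\Gamma_0$, needed as a hypothesis in Theorem~\ref{thm:charact-Ano}, is supplied by convex cocompactness in one direction and by Gromov hyperbolicity in the other.
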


We would like to generalise this equivalence to higher-rank semisimple Lie groups~$G$.

A natural first attempt would be to replace $\HH^n$ by the Riemannian symmetric space of~$G$.
However, this turns out to be rather restrictive: Kleiner--Leeb~\cite{kl06} and Quint \cite{qui05} proved that if $G$ is a real simple Lie group of real rank $\geq 2$, with Riemannian symmetric space $X = G/K$, then any Zariski-dense discrete subgroup of~$G$, acting with compact quotient on some nonempty convex subset of~$X$, is a cocompact lattice in~$G$; in particular, $\Gamma$ is \emph{not} Gromov hyperbolic and $\rho$ is \emph{not} Anosov.
Thus this approach does not provide a generalisation of Fact~\ref{guiding-fact}.

Instead, we make a second attempt by viewing $\HH^n$ as a properly convex open set in projective space as in \eqref{eqn:proj-model-Hn}; we can then try to generalise Fact \ref{guiding-fact} by replacing $\HH^n$ with any properly convex open subset $\Omega$~of~$\PP(\R^{n+1})$.

Recall from the proof of Theorem~\ref{thm:Hitchin-comp} that $\Omega$ being \emph{properly convex} means that it is convex and bounded in some affine chart of $\PP(\R^{n+1})$.
In this setting $\Omega$ carries a natural proper metric $\mathtt{d}_{\Omega}$, the \emph{Hilbert metric}, which is invariant under $\mathrm{Aut}(\Omega) := \{ g\in\PGL(n+1,\R) \,|\, g\cdot\Omega = \Omega\}$ (see Figure~\ref{fig:Hilbert-dist}).
In particular, any discrete subgroup of $\mathrm{Aut}(\Omega)$ acts properly discontinuously on~$\Omega$.

\begin{figure}[h!]
\centering
\begin{tikzpicture}
\filldraw[draw=black,fill=gray!20]
 plot[smooth,samples=200,domain=0:pi] ({4*cos(\x r)*sin(\x r)},{-4*sin(\x r)});
cycle;
\draw (-1,-2.5) node[anchor=north west] {$x$};
\fill [color=black] (-1,-2.5) circle (2.5pt);
\draw (1,-2) node[anchor=north west] {$y$};
\fill [color=black] (1,-2) circle (2.5pt);
\draw [smooth,samples=200,domain=-4:4] plot ({\x},{0.25*\x-2.25});
\draw (-1.97,-2.75) node[anchor=north west] {$a$};
\fill [color=black] (-1.98,-2.75) circle (2.5pt);
\draw (1.73,-1.75) node[anchor=north west] {$b$};
\fill [color=black] (1.63,-1.85) circle (2.5pt);
\draw (0.7,-3) node[anchor=north west] {$\Omega$};
\end{tikzpicture}
\caption{In a properly convex open subset $\Omega$ of $\PP(\R^d)$, the \emph{Hilbert distance} between two distinct points $x,y\in\Omega$ is given by $\mathtt{d}_{\Omega}(x,y) := \frac{1}{2} \log \, [a,x,y,b]$, where $[\cdot,\cdot,\cdot,\cdot]$ is the cross-ratio on $\PP^1(\R)$, normalised so that $[0,1,y,\infty]=y$, and $a,b$ are the intersection points of $\partial\Omega$ with the projective line through $x$ and~$y$, with $a,x,y,b$ in this order. The Hilbert metric $\mathtt{d}_{\Omega}$ coincides with the hyperbolic metric when $\Omega = \HH^n$ as in \eqref{eqn:proj-model-Hn}, but in general $\mathtt{d}_{\Omega}$ is not Riemannian, only Finsler.}
\label{fig:Hilbert-dist}
\end{figure}
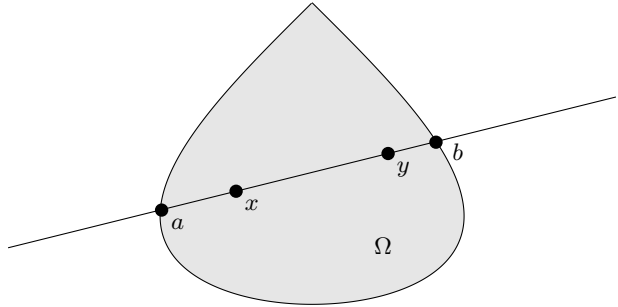

Note that $\HH^n$, viewed as a properly convex open subset of $\PP(\R^{n+1})$, does not contain any nontrivial projective segments in its boundary.
For properly convex open sets $\Omega$ with this property (also known as \emph{strictly convex} open sets), we consider the following analogue of Definition~\ref{def:cc-gf-rank-1}.

\begin{defn} \label{def:strong-proj-cc}
Let $\Omega$ be a properly convex open subset of $\PP(\R^d)$, whose boundary $\partial\Omega$ does not contain any nontrivial projective segments.
Let $\Gamma_0$ be a group and $\rho : \Gamma_0\to\mathrm{Aut}(\Omega) \subset \PGL(d,\R)$ a representation.
We say that the action of $\Gamma_0$ on $\Omega$ via~$\rho$ is \emph{convex cocompact} if it is properly discontinuous and if there exists a nonempty $\rho(\Gamma_0)$-invariant convex subset $\mathcal{C}$ of~$\Omega$ such that $\rho(\Gamma_0)\backslash\mathcal{C}$ is compact.
\end{defn}

In that case, the representation $\rho$ has finite kernel and discrete image and, as in Remark~\ref{rem:cc-gf-finitely-gen}, the group $\Gamma_0$ is finitely generated.

Similarly to Fact~\ref{fact:cc-gf-rk-1-Ccore}, we can rephrase convex cocompactness in terms of some specific convex set in~$\Omega$.
Namely, define the \emph{orbital limit set} $\Lambdao_{\rho(\Gamma_0)}(\Omega)$ of $\rho(\Gamma_0)$ in~$\Omega$ to be the set of accumulation points in $\partial\Omega$ of some $\rho(\Gamma_0)$-orbit of~$\Omega$; one easily checks that $\Lambdao_{\rho(\Gamma_0)}(\Omega)$ does not depend on the choice of $\rho(\Gamma_0)$-orbit, because $\partial\Omega$ does not contain any nontrivial segments.
Define the \emph{convex core} $\Ccore_{\rho(\Gamma_0)}(\Omega) \subset \Omega$ of $\rho(\Gamma_0)$ to be the convex hull of $\Lambdao_{\rho(\Gamma_0)}(\Omega)$ in~$\Omega$ (\ie the smallest closed convex subset of~$\Omega$ whose closure in $\PP(\R^d)$ contains~$\Lambdao_{\rho(\Gamma_0)}$).
Similarly to Fact~\ref{fact:cc-gf-rk-1-Ccore}, for infinite~$\Gamma_0$, the action of $\Gamma_0$ on $\Omega$ via~$\rho$ is then convex cocompact if and only if it is properly discontinuous and $\rho(\Gamma_0)\backslash\Ccore_{\rho(\Gamma_0)}(\Omega)$ is compact and nonempty.

The following result is a generalisation of Fact~\ref{guiding-fact} in this setting.
It was first proved in \cite{dgk18} for representations $\rho$ with values in $\PO(p,q)$, and then in general in \cite{dgk-proj-cc} and independently (in a slightly different form and under some irreducibility assumption) in \cite{zim}.
See also \cite{bm12,ben04,cm14,mes90} for related earlier results.

\begin{thm} \label{thm:Ano-strong-cc}
Let $\Gamma_0$ be an infinite group and $\rho : \Gamma_0\to\PGL(d,\R)$ a representation.
Suppose that $\rho(\Gamma_0)$ preserves a nonempty properly convex open subset of $\PP(\R^d)$.
Then the following are equivalent:
\begin{enumerate}[(1)]
  \item\label{item:Ano-strong-cc-1} $\Gamma_0$ is Gromov hyperbolic and $\rho : \Gamma_0\to\PGL(d,\R)$ is $P_1$-Anosov;
  \item\label{item:Ano-strong-cc-2} $\rho$ is \emph{strongly convex cocompact in $\PP(\R^d)$}: the group $\Gamma_0$ acts convex cocompactly (Definition~\ref{def:strong-proj-cc}) via~$\rho$ on some properly convex open subset $\Omega$ of $\PP(\R^d)$ such that $\partial\Omega$ is $C^1$ and contains no segments.
\end{enumerate}
\end{thm}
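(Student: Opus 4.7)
The plan is to prove the two implications separately, tackling \eqref{item:Ano-strong-cc-2}~$\Rightarrow$~\eqref{item:Ano-strong-cc-1} first, since it can be assembled from tools already in the literature (Benoist's work on Hilbert geometries, plus Theorem~\ref{thm:charact-Ano}), and then handle the harder construction direction \eqref{item:Ano-strong-cc-1}~$\Rightarrow$~\eqref{item:Ano-strong-cc-2}. In both directions the essential bridge is the identification of the proximal limit set $\xi_1(\di\Gamma_0) \subset \PP(\R^d)$ with an orbital limit set $\Lambdao_{\rho(\Gamma_0)}(\Omega) \subset \partial\Omega$.

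For \eqref{item:Ano-strong-cc-2}~$\Rightarrow$~\eqref{item:Ano-strong-cc-1}, I would argue as follows. Since $\partial\Omega$ is $C^1$ and contains no nontrivial segments, a theorem of Benoist states that the Hilbert metric $\mathtt{d}_{\Omega}$ makes $(\Omega,\mathtt{d}_{\Omega})$ a Gromov hyperbolic proper geodesic metric space. The convex core $\Ccore_{\rho(\Gamma_0)}(\Omega)$ is a nonempty closed convex $\rho(\Gamma_0)$-invariant subset with compact quotient (by convex cocompactness and strict convexity of $\partial\Omega$, which forces the orbital limit set to coincide with the limit set in $\partial\Omega$ of any orbit). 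Applying the \v{S}varc--Milnor lemma to the action on $\Ccore_{\rho(\Gamma_0)}(\Omega)$ yields that $\Gamma_0$ is finitely generated, quasi-isometric to the core, and hence Gromov hyperbolic, with an equivariant identification of $\di\Gamma_0$ with $\Lambdao_{\rho(\Gamma_0)}(\Omega)$. To conclude that $\rho$ is $P_1$-Anosov, I would use characterisation \eqref{item:i-bound-map-dyn-preserv} or \eqref{item:i-bound-map-strong-dyn-preserv} of Theorem~\ref{thm:charact-Ano}: define $\xi_1 : \di\Gamma_0 \to \PP(\R^d)$ as the composition of the identification $\di\Gamma_0 \simeq \Lambdao_{\rho(\Gamma_0)}(\Omega)$ with inclusion, and $\xi_{d-1} : \di\Gamma_0 \to \Gr_{d-1}(\R^d)$ by sending a boundary point to the (unique, by the $C^1$ assumption) supporting hyperplane to $\Omega$ at $\xi_1(w)$. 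Transversality of $\xi_1(w)$ and $\xi_{d-1}(w')$ for $w\neq w'$ is a consequence of proper convexity of $\Omega$: two distinct points of $\partial\Omega$ cannot share a supporting hyperplane, for otherwise $\Omega$ would contain a segment in its boundary, contradicting strict convexity. Continuity and dynamics-preservation follow from the dynamics of convex cocompactness on $\overline{\Omega}$.

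For \eqref{item:Ano-strong-cc-1}~$\Rightarrow$~\eqref{item:Ano-strong-cc-2}, the input is the Anosov boundary maps $\xi_1 : \di\Gamma_0 \to \PP(\R^d)$ and $\xi_{d-1} : \di\Gamma_0 \to \Gr_{d-1}(\R^d)$, together with the hypothesis that $\rho(\Gamma_0)$ preserves some nonempty properly convex open subset $\Omega_0 \subset \PP(\R^d)$. The strategy is to construct a distinguished $\rho(\Gamma_0)$-invariant properly convex open set $\Omega$ whose closure has $\xi_1(\di\Gamma_0)$ as its orbital limit set and whose boundary is $C^1$ and strictly convex. A natural candidate is obtained from the convex hull $\mathcal{C}$ of $\xi_1(\di\Gamma_0)$ taken inside $\overline{\Omega_0}$: this convex hull is $\rho(\Gamma_0)$-invariant, and one should show (i) $\mathcal{C}$ meets $\Omega_0$, (ii) the quotient $\rho(\Gamma_0)\backslash\mathcal{C}$ is compact, using the Anosov quasi-isometric embedding property \eqref{item:qi-i-direction} of Theorem~\ref{thm:charact-Ano} to control orbits, and (iii) one can thicken $\mathcal{C}$ to a properly convex open $\Omega$ by intersecting the half-spaces dual to $\xi_{d-1}(\di\Gamma_0)$ (bounded by the supporting hyperplanes $\xi_{d-1}(w)$ on the correct side) with $\Omega_0$, obtaining a $\rho(\Gamma_0)$-invariant properly convex open set that ``looks like'' $\Omega_0$ near the limit set. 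The transversality property $\xi_1(w) \oplus \xi_{d-1}(w') = \R^d$ for $w\neq w'$ is then used to ensure that $\partial\Omega$ contains no nontrivial segments (such a segment would force two boundary points to share a tangent hyperplane) and that $\partial\Omega$ is $C^1$ at points of the limit set (the unique supporting hyperplane is given by $\xi_{d-1}$).

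The main obstacle will be carrying out step (iii) cleanly: a priori the convex hull $\mathcal{C}$ has no reason to have a $C^1$ strictly convex boundary away from the limit set, and the raw intersection of dual half-spaces need not give a properly convex open set with the desired regularity globally. Controlling the geometry of $\partial\Omega$ simultaneously at limit points and at non-limit points of $\partial\Omega$, and verifying that $\rho(\Gamma_0)$ still acts with compact quotient on the convex hull of $\xi_1(\di\Gamma_0)$ inside the resulting $\Omega$, is where the serious work lies; one likely needs a careful duality argument, exploiting that $P_1$-Anosov is equivalent to $P_{d-1}$-Anosov, so that one may work symmetrically with $\Omega$ and a ``dual'' properly convex set, and then use uniform transversality of $\xi_1$ and $\xi_{d-1}$ (a consequence of the Anosov contraction Condition~\ref{cond:Ano}) to ensure $C^1$ and strict convexity along the limit set.
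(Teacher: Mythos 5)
Your overall architecture matches the paper's: for \eqref{item:Ano-strong-cc-1}~$\Rightarrow$~\eqref{item:Ano-strong-cc-2} the paper likewise takes the convex hull of $\xi_1(\di\Gamma_0)$ inside a maximal invariant convex set cut out by the hyperplanes $\xi_{d-1}(w)$ (it uses the connected component $\Omega_{\max}$ of $\PP(\R^d)\smallsetminus\bigcup_w\xi_{d-1}(w)$ containing the given invariant convex set, rather than an intersection with $\Omega_0$), proves cocompactness on the hull, and then smooths. Two remarks on that direction: the paper gets cocompactness of the action on the hull from the expansion property (condition~\eqref{item:i-expand} of Theorem~\ref{thm:charact-Ano}), running the same argument as \eqref{item:rk1-expand}~$\Rightarrow$~\eqref{item:rk1-cc} of Theorem~\ref{thm:charact-cc-rank-1}, not from the quasi-isometric embedding property alone, which does not directly control the convex hull; and the key input for the smoothing step you flag as the main obstacle is precisely that transversality of $\xi_1$ and $\xi_{d-1}$ forbids segments of $\partial\Omega_{\max}$ between points of $\xi_1(\di\Gamma_0)$, after which the regularisation is carried out in \cite[\S\,9]{dgk-proj-cc}. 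So that direction is a correct skeleton with the hard step honestly deferred.

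There is, however, a genuine gap in your proof of \eqref{item:Ano-strong-cc-2}~$\Rightarrow$~\eqref{item:Ano-strong-cc-1}: the claim that ``a theorem of Benoist states that the Hilbert metric makes $(\Omega,\mathtt{d}_{\Omega})$ a Gromov hyperbolic proper geodesic metric space'' because $\partial\Omega$ is $C^1$ with no segments. Benoist's equivalence between strict convexity and Gromov hyperbolicity of the Hilbert metric holds for \emph{divisible} convex sets; for a general properly convex open set, strict convexity and $C^1$ regularity of the boundary are necessary but not sufficient for Gromov hyperbolicity of $(\Omega,\mathtt{d}_{\Omega})$ (Benoist's actual criterion is a quantitative quasisymmetry condition on $\partial\Omega$, and there are strictly convex $C^1$ domains that fail it). Since your $\Omega$ is not assumed divisible, this step collapses, and with it the appeal to \v{S}varc--Milnor to conclude that $\Gamma_0$ is Gromov hyperbolic. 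The correct fix is the one the paper uses: one does not prove hyperbolicity of all of $\Omega$, but only of the convex core $(\Ccore_{\rho(\Gamma_0)}(\Omega),\mathtt{d}_{\Omega})$, by a limiting argument that combines the absence of segments in $\partial\Omega$ with the \emph{cocompactness} of the $\Gamma_0$-action on the core (if triangles in the core were not uniformly thin, one would translate back to a compact fundamental domain and extract in the limit a nontrivial segment in $\partial\Omega$); see \cite[Lem.\,6.3]{dgk-proj-cc}. Your construction of the boundary maps ($\xi_1$ from the identification $\di\Gamma_0\simeq\Lambdao_{\rho(\Gamma_0)}(\Omega)$, $\xi_{d-1}$ from the unique supporting hyperplane) and the transversality argument are fine once hyperbolicity of the core is in place, though you should still verify the singular-value divergence $(\mu_1-\mu_2)(\rho(\gamma))\to+\infty$ required by condition~\eqref{item:i-bound-map-dyn-preserv} of Theorem~\ref{thm:charact-Ano}, which the paper extracts from Lemma~\ref{lem:conv-G/P} and the $C^1$ regularity of $\partial\Omega$ via the dual convex set $\Omega^*$.
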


Here we say that $\partial\Omega$ is $C^1$ if every point of $\partial\Omega$ has a unique supporting hyperplane.
The phrase \emph{strongly convex cocompact} is meant to reflect the strong regularity imposed on $\partial\Omega$ (namely, $C^1$ and no segments).

\subsubsection{A few comments on Theorem~\ref{thm:Ano-strong-cc}}

In certain situations, the assumption in Theorem~\ref{thm:Ano-strong-cc} that $\rho(\Gamma_0)$ preserve a properly convex open subset of $\PP(\R^d)$ is automatically satisfied for $P_1$-Anosov representations~$\rho$.
For instance, this is the case when $\di\Gamma_0$ is connected and $\rho$ takes values in $\PO(p,q)\subset\PGL(d,\R)$ for some $p,q\geq 1$ with $p+q=d$, by \cite{dgk18}.
In this case, the $\rho(\Gamma_0)$-invariant properly convex open set $\Omega$ given by Theorem~\ref{thm:Ano-strong-cc}.\eqref{item:Ano-strong-cc-2} can be taken in
$$\{ [v]\in\nolinebreak\PP(\R^{p+q}) \,|\, \langle v,v\rangle_{p,q} <\nolinebreak 0\} = \HH^{p,q-1}$$
(we then say that $\rho$ is \emph{$\HH^{p,q-1}$-convex cocompact}) or in
$$\{ [v]\in\nolinebreak\PP(\R^{p+q}) \,|\, -\nolinebreak\langle v,v\rangle_{p,q}<\nolinebreak 0\} \simeq \HH^{q,p-1}$$
(we then say that $\rho$ is \emph{$\HH^{q,p-1}$-convex cocompact}), where $\langle\cdot,\cdot\rangle_{p,q}$ is the symmetric bilinear form of signature $(p,q)$ on~$\R^{p+q}$ defining $\PO(p,q)$.

On the other hand, there exist $P_1$-Anosov representations that do not preserve any properly convex open subset of $\PP(\R^d)$: \eg Hitchin representations (see Sections \ref{subsec:deform-Fuchsian-higher-rank} and~\ref{subsec:Anosov}) into $\PSL(d,\R)$ for even~$d$.

However, one can always reduce to preserving a properly convex open set by considering a larger projective space.
Indeed, consider the natural action of $\GL(d,\R)$ on the vector space $\mathrm{Sym}_d(\R)$ of symmetric $(d\times d)$ real matrices by $g\cdot A = gAg^T$.
It induces a representation $\tau : \PGL(d,\R)\to\PGL(\mathrm{Sym}_d(\R))$, which preserves the open subset $\Omega_{\mathrm{sym}}$ of $\PP(\mathrm{Sym}_d(\R))$ corresponding to positive definite symmetric matrices.
The set $\Omega_{\mathrm{sym}}$ is properly convex.
One can check (see \cite{gw12}, or use one of the characterisations of Theorem~\ref{thm:charact-Ano}) that a representation $\rho : \Gamma_0\to\PGL(d,\R)$ is $P_1$-Anosov if and only if $\tau\circ\rho : \Gamma_0\to\PGL(\mathrm{Sym}_d(\R))$ is $P_1$-Anosov.
Theorem~\ref{thm:Ano-strong-cc} then implies the following.
  
\begin{cor} \label{cor:Ano-strong-cc-no-hyp}
For any infinite group $\Gamma_0$ and any representation $\rho : \Gamma_0\to\PGL(d,\R)$, the following are equivalent:
\begin{enumerate}[(1)]
  \item $\Gamma_0$ is Gromov hyperbolic and $\rho : \Gamma_0\to\PGL(d,\R)$ is $P_1$-Anosov;
  \item $\tau\circ\rho$ is strongly convex cocompact in $\PP(\mathrm{Sym}_d(\R))$.
\end{enumerate}
\end{cor}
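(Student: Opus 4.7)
The plan is to apply Theorem~\ref{thm:Ano-strong-cc} to the composed representation $\tau\circ\rho:\Gamma_0\to\PGL(\mathrm{Sym}_d(\R))$ rather than to $\rho$ itself. The crucial point is that $\tau\circ\rho$ automatically preserves the nonempty properly convex open set $\Omega_{\mathrm{sym}}\subset\PP(\mathrm{Sym}_d(\R))$, so the hypothesis of Theorem~\ref{thm:Ano-strong-cc} is satisfied for $\tau\circ\rho$ without any further assumption on~$\rho$. The theorem then gives the equivalence between $\tau\circ\rho$ being $P_1$-Anosov (as a representation into $\PGL(\mathrm{Sym}_d(\R))$) and $\tau\circ\rho$ being strongly convex cocompact in $\PP(\mathrm{Sym}_d(\R))$. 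What remains is to identify the $P_1$-Anosov property of $\rho$ with the $P_1$-Anosov property of $\tau\circ\rho$.

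For this I would use the singular value characterisation of Theorem~\ref{thm:charact-Ano}.\eqref{item:qi-i-direction}: a representation is $P_1$-Anosov if and only if $(\mu_1-\mu_2)(\rho(\gamma))$ grows at least linearly in the word length of $\gamma\in\Gamma_0$. It is therefore enough to establish the pointwise identity
$$(\mu_1-\mu_2)(\tau(g)) \;=\; (\mu_1-\mu_2)(g) \qquad \text{for all } g\in\PGL(d,\R).$$
Using the Cartan decomposition (Fact~\ref{fact:Cartan-decomp}) $g = k_1\exp(a)k_2$ with $a = \mathrm{diag}(\mu_1(g),\dots,\mu_d(g))$, and the fact that $\tau(k_1),\tau(k_2)$ are orthogonal on $\mathrm{Sym}_d(\R)$ for the trace inner product $\langle A,B\rangle = \mathrm{tr}(AB)$, the singular values of $\tau(g)$ coincide with those of $\tau(\exp(a))$. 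The operator $A\mapsto \exp(a)\,A\,\exp(a)^T$ acts diagonally in the standard basis $(E_{ij})_{i\leq j}$ of $\mathrm{Sym}_d(\R)$ with eigenvalues $e^{\mu_i(g)+\mu_j(g)}$; the two largest are $e^{2\mu_1(g)}$ and $e^{\mu_1(g)+\mu_2(g)}$, yielding the identity.

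Putting things together, for (1)$\Rightarrow$(2): if $\rho$ is $P_1$-Anosov then, by the identity and Theorem~\ref{thm:charact-Ano}, so is $\tau\circ\rho$, and Theorem~\ref{thm:Ano-strong-cc} applied to $\tau\circ\rho$ yields the strong convex cocompactness in $\PP(\mathrm{Sym}_d(\R))$. Conversely, for (2)$\Rightarrow$(1): if $\tau\circ\rho$ is strongly convex cocompact in $\PP(\mathrm{Sym}_d(\R))$, Theorem~\ref{thm:Ano-strong-cc} first gives that $\Gamma_0$ is Gromov hyperbolic and $\tau\circ\rho$ is $P_1$-Anosov, and the identity then transfers the $P_1$-Anosov property back to~$\rho$. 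The only non-formal ingredient is the singular value computation for $\tau$; this is the heart of the argument but is a short direct calculation, while everything else is packaged in Theorems \ref{thm:Ano-strong-cc} and~\ref{thm:charact-Ano}.
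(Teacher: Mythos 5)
Your proposal is correct and follows essentially the same route as the paper: the text preceding the corollary observes that $\tau\circ\rho$ preserves the properly convex set $\Omega_{\mathrm{sym}}$, reduces to showing that $\rho$ is $P_1$-Anosov if and only if $\tau\circ\rho$ is (explicitly suggesting one of the characterisations of Theorem~\ref{thm:charact-Ano} for this), and then invokes Theorem~\ref{thm:Ano-strong-cc}. Your singular-value computation $(\mu_1-\mu_2)(\tau(g))=2\mu_1(g)-(\mu_1(g)+\mu_2(g))=(\mu_1-\mu_2)(g)$ correctly fills in the transfer step via Theorem~\ref{thm:charact-Ano}.\eqref{item:qi-i-direction}, which is exactly the check the paper leaves to the reader.
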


This actually yields a characterisation of $P$-Anosov representations into~$G$ for any proper parabolic subgroup $P$ of any noncompact semisimple Lie group~$G$, by considering an appropriate representation of $G$ to some large projective linear group.
For instance, for $G = \PGL(d,\R)$ and $P = P_i$ with $1\leq i\leq d-1$ as in Section~\ref{subsec:Anosov}, we can consider the natural representation $\tau_i : \PGL(d,\R)\to\PGL(S^2(\Lambda^i\R^d))$ where $S^2(\Lambda^i\R^d)$ is the second symmetric power of the $i$-th exterior power of the standard representation of $\GL(d,\R)$ on~$\R^d$.
(For $i=1$, this identifies with $\tau : \PGL(d,\R)\to\PGL(\mathrm{Sym}_d(\R))$ above.)
Again, one can check that $\rho : \Gamma_0\to\PGL(d,\R)$ is $P_i$-Anosov if and only if $\tau_i\circ\rho : \Gamma_0\to\PGL(S^2(\Lambda^i\R^d))$ is $P_1$-Anosov.
Theorem~\ref{thm:Ano-strong-cc} then implies the following.

\begin{cor} \label{cor:i-Ano-strong-cc-no-hyp}
For any infinite group $\Gamma_0$, any representation $\rho : \Gamma_0\to\PGL(d,\R)$, and any $1\leq i\leq d-1$, the following are equivalent:
\begin{enumerate}[(1)]
  \item $\Gamma_0$ is Gromov hyperbolic and $\rho : \Gamma_0\to\PGL(d,\R)$ is $P_i$-Anosov;
  \item $\tau_i\circ\rho$ is strongly convex cocompact in $\PP(S^2(\Lambda^i\R^d))$.
\end{enumerate}
\end{cor}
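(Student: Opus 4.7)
The plan is to reduce Corollary~\ref{cor:i-Ano-strong-cc-no-hyp} to Theorem~\ref{thm:Ano-strong-cc} applied to the composition $\tau_i \circ \rho$, by establishing two auxiliary facts.

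First, I would verify that $\tau_i \circ \rho$ satisfies the standing hypothesis of Theorem~\ref{thm:Ano-strong-cc}: its image preserves a nonempty properly convex open subset of $\PP(S^2(\Lambda^i \R^d))$. Fixing a basis of $\Lambda^i \R^d$ identifies $S^2(\Lambda^i \R^d)$ with $\mathrm{Sym}_{d_i}(\R)$, where $d_i = \binom{d}{i}$, and under this identification the action of $g \in \PGL(d,\R)$ through $\tau_i$ becomes congruence $[A] \mapsto [(\Lambda^i g)\, A\, (\Lambda^i g)^T]$, which preserves the properly convex open cone of positive definite forms — exactly as in the case $i=1$ recalled before Corollary~\ref{cor:Ano-strong-cc-no-hyp}. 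In particular, $\tau_i(\rho(\Gamma_0))$ preserves this cone.

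Second, I would establish the equivalence: $\rho$ is $P_i$-Anosov if and only if $\tau_i \circ \rho$ is $P_1$-Anosov. Using characterisation~\eqref{item:qi-i-direction} of Theorem~\ref{thm:charact-Ano} on both sides, this reduces to the singular-value identity
$$\mu_1(\tau_i(g)) - \mu_2(\tau_i(g)) \;=\; \mu_i(g) - \mu_{i+1}(g) \qquad \text{for all } g \in \PGL(d,\R).$$
I would prove it in two steps via the Cartan decomposition (Fact~\ref{fact:Cartan-decomp}). For $\Lambda^i$, write $g = k \exp(a) k'$ with $k,k' \in K$ and $a$ diagonal with entries $\mu_1(g) \geq \cdots \geq \mu_d(g)$; then $\Lambda^i g = \Lambda^i(k) \exp(\Lambda^i a) \Lambda^i(k')$, and $\Lambda^i a$ is diagonal in the natural basis of $\Lambda^i \R^d$ with entries $\mu_{j_1}(g) + \cdots + \mu_{j_i}(g)$ indexed by strictly increasing multi-indices $(j_1 < \cdots < j_i)$. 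The two largest such entries are $\mu_1(g) + \cdots + \mu_i(g)$ and $\mu_1(g) + \cdots + \mu_{i-1}(g) + \mu_{i+1}(g)$, differing by $\mu_i(g) - \mu_{i+1}(g)$. An entirely analogous computation for $S^2$ applied to $V = \Lambda^i \R^d$ shows $\mu_1(S^2 h) - \mu_2(S^2 h) = \mu_1(h) - \mu_2(h)$ for any $h \in \GL(V)$, because the two largest entries of $S^2(\exp(a_V))$ in the obvious basis of $S^2 V$ are $2\mu_1(h)$ and $\mu_1(h) + \mu_2(h)$. Applied to $h = \Lambda^i g$, this yields the claimed identity.

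With these two facts in hand, applying Theorem~\ref{thm:Ano-strong-cc} to $\tau_i \circ \rho$ gives the equivalence between ``$\Gamma_0$ Gromov hyperbolic and $\tau_i \circ \rho$ is $P_1$-Anosov'' and ``$\tau_i \circ \rho$ is strongly convex cocompact in $\PP(S^2(\Lambda^i \R^d))$''; combined with the reduction of the first statement to ``$\Gamma_0$ Gromov hyperbolic and $\rho$ is $P_i$-Anosov'', this is exactly Corollary~\ref{cor:i-Ano-strong-cc-no-hyp}. The only non-routine step is the singular-value identity in the second fact; I expect no real obstacle beyond careful bookkeeping of which two largest singular values of $\tau_i(g)$ are involved.
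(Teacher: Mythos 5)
Your proposal is correct and matches the paper's route: the paper likewise deduces the corollary from Theorem~\ref{thm:Ano-strong-cc} by noting that $\tau_i(\rho(\Gamma_0))$ preserves the properly convex cone of positive definite elements of $S^2(\Lambda^i\R^d)\simeq\mathrm{Sym}_{\binom{d}{i}}(\R)$ and that $\rho$ is $P_i$-Anosov if and only if $\tau_i\circ\rho$ is $P_1$-Anosov, the latter being checked exactly as you do, via characterisation~\eqref{item:qi-i-direction} of Theorem~\ref{thm:charact-Ano} and the singular-value gap identity $(\mu_1-\mu_2)(\tau_i(g))=(\mu_i-\mu_{i+1})(g)$. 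Your bookkeeping of the two largest singular values of $\Lambda^i g$ and of $S^2h$ is accurate, so the argument goes through as written.
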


\subsubsection{Sketch of proof of Theorem~\ref{thm:Ano-strong-cc}}

\begin{proof}[Proof of \eqref{item:Ano-strong-cc-1}~$\Rightarrow$~\eqref{item:Ano-strong-cc-2}:]
Suppose that $\Gamma_0$ is Gromov hyperbolic, that $\rho$ is $P_1$-Anosov with boundary maps $\xi_1 : \di\Gamma_0\to\mathrm{Gr}_1(\R^d)=\PP(\R^d)$ and $\xi_{d-1} : \di\Gamma_0\to\mathrm{Gr}_{d-1}(\R^d)=\PP((\R^d)^*)$, and that $\rho(\Gamma_0)$ preserves a nonempty properly convex open subset $\Omega$ of $\PP(\R^d)$.
Since $\Omega$ was chosen without care, it is possible that $\partial\Omega$ contains segments or that the action of $\Gamma_0$ on $\Omega$ via~$\rho$ is not convex cocompact.
Therefore, we do not work with~$\Omega$ itself, but consider instead the connected component $\Omega_{\max}$ of $\PP(\R^d)\smallsetminus\bigcup_{w\in\di\Gamma_0} \xi_{d-1}(w)$ containing~$\Omega$ (where we view each $\xi_{d-1}(w)$ as a projective hyperplane in $\PP(\R^d)$); it is $\rho(\Gamma_0)$-invariant, open, and convex (not necessarily bounded) in some affine chart of $\PP(\R^d)$.
Using Lemma~\ref{lem:conv-G/P}, one can show that the action of $\Gamma_0$ on $\Omega_{\max}$ via~$\rho$ is properly discontinuous, and that the set of accumulation points of any $\rho(\Gamma_0)$-orbit of $\Omega_{\max}$ is $\xi_1(\di\Gamma_0)$.

Consider the convex hull $\mathcal{C}$ of $\xi_1(\di\Gamma_0)$ in $\Omega_{\max}$.
One easily checks, using the transversality of $\xi_1$ and~$\xi_{d-1}$, that $\xi_1(\di\Gamma_0)$ is not contained in a single supporting hyperplane to $\Omega_{\max}$ in $\PP(\R^d)$, and therefore that $\mathcal{C}$ is nonempty.
Using the expansion property \eqref{item:i-expand} of Theorem~\ref{thm:charact-Ano} for Anosov representations, a similar reasoning to the proof of \eqref{item:rk1-expand}~$\Rightarrow$~\eqref{item:rk1-cc} in Section~\ref{subsec:charact-cc-rank-1} then shows that $\rho(\Gamma_0)\backslash\mathcal{C}$ is compact: see \cite[\S\,8]{dgk-proj-cc}.

By transversality of $\xi_1$ and $\xi_{d-1}$, there are no nontrivial segments in $\partial\Omega_{\max}$ between points of $\xi_1(\di\Gamma_0)$.
This makes it possible to ``smooth out'' $\Omega_{\max}$ to obtain a $\rho(\Gamma_0)$-invariant properly convex open subset $\Omega' \subset \Omega_{\max}$ containing~$\mathcal{C}$ such that $\partial\Omega'$ is $C^1$ and contains no segments: see \cite[\S\,9]{dgk-proj-cc}.
The action of $\Gamma_0$ on $\Omega'$ via~$\rho$ is convex cocompact as desired.
\end{proof}

\begin{proof}[Proof of \eqref{item:Ano-strong-cc-2}~$\Rightarrow$~\eqref{item:Ano-strong-cc-1}:]
Suppose that $\Gamma_0$ acts convex cocompactly via~$\rho$ on some properly convex open subset $\Omega$ of $\PP(\R^d)$ such that $\partial\Omega$ is $C^1$ and contains no segments.
Because $\partial\Omega$ contains no segments, the geodesic rays of~$\Omega$ for the Hilbert metric $\mathtt{d}_{\Omega}$ (Figure~\ref{fig:Hilbert-dist}) are exactly the projective segments between a point of~$\Omega$ and a point of $\partial\Omega$, and two such rays remain at bounded Hausdorff distance for $\mathtt{d}_{\Omega}$ if and only if their endpoints in $\partial\Omega$ are the same.
Therefore the convex core $\Ccore_{\rho(\Gamma_0)}$, endowed with the restriction of $\mathtt{d}_{\Omega}$, is a geodesic metric space whose visual boundary $\di\Ccore_{\rho(\Gamma_0)}$ identifies with its ideal boundary $\overline{\Ccore_{\rho(\Gamma_0)}} \cap \partial\Omega$ in $\partial\Omega$.

Using the fact that $\partial\Omega$ contains no segments, one can check by a limiting argument that all triangles in $\Ccore_{\rho(\Gamma_0)}$ must be uniformly thin, \ie the metric space $(\Ccore_{\rho(\Gamma_0)},\mathtt{d}_{\Omega})$ is Gromov hyperbolic: see \cite[Lem.\,6.3]{dgk-proj-cc}.
Since the action of $\Gamma_0$ on $(\Ccore_{\rho(\Gamma_0)},\mathtt{d}_{\Omega})$ via~$\rho$ is properly discontinuous, by isometries, with compact quotient, we deduce that $\Gamma_0$ is Gromov hyperbolic and (as in the proof of \eqref{item:rk1-cc}~$\Rightarrow$~\eqref{item:rk1-bound-map-strong-dyn-preserv} in Section~\ref{subsec:charact-cc-rank-1}) that any orbital map $\Gamma_0\to\Ccore_{\rho(\Gamma_0)}$ extends to a continuous $\rho$-equivariant boundary map $\xi_1 : \di\Gamma_0\to\di\Ccore_{\rho(\Gamma_0)} \subset \PP(\R^d)$.

Consider the dual $\Omega^* = \{H\in\PP((\R^d)^*) \,|\, H\cap\overline{\Omega}=\emptyset\}$ of~$\Omega$ (where we view $\PP((\R^d)^*)$ as the set of projective hyperplanes in $\PP(\R^d)$).
It is a properly convex open subset of $\PP((\R^d)^*)$.
The boundary $\partial\Omega^*$ of~$\Omega^*$ is $C^1$ (because $\partial\Omega$ contains no segments), and it contains no segments (because $\partial\Omega$ is $C^1$).
One can show that the dual action of $\Gamma_0$ on~$\Omega^*$ via~$\rho$ is still convex cocompact: see \cite[\S\,5]{dgk-proj-cc}.
Then the same reasoning as above yields a continuous $\rho$-equivariant boundary map $\xi_{d-1} : \di\Gamma_0\to\PP((\R^d)^*)$.

By construction, $\xi_1$ and~$\xi_{d-1}$ are transverse: indeed, $\xi_{d-1}(w)$ is a supporting hyperplane to~$\Omega$ at $\xi_1(w)$ for any~$w$, and $\partial\Omega$ contains no segments.
One checks that $\xi_1$ and $\xi_{d-1}$ are dynamics-preserving and (using Lemma~\ref{lem:conv-G/P}) that $(\mu_1-\mu_2)(\rho(\gamma))\to +\infty$ as $\mathtt{d}_{\mathrm{Cay}(\Gamma_0)}(e,\gamma)\to +\infty$: see \cite[\S\,7]{dgk-proj-cc}.
We then apply the implication \eqref{item:i-bound-map-dyn-preserv}~$\Rightarrow$~\eqref{item:Ano} of Theorem~\ref{thm:charact-Ano}.
\end{proof}

\subsubsection{Applications}

Theorem~\ref{thm:Ano-strong-cc} and Corollaries \ref{cor:Ano-strong-cc-no-hyp}--\ref{cor:i-Ano-strong-cc-no-hyp} give geometric interpretations for Anosov representations.

\begin{ex}
For odd $d$, any Hitchin representation $\rho : \pi_1(S)\to\PSL(d,\R)$ as in Section~\ref{subsec:deform-Fuchsian-higher-rank} preserves a nonempty properly convex open subset of $\PP(\R^d)$ (see \cite{dgk18,dgk-proj-cc,zim}).
Therefore these representations are strongly convex cocompact in $\PP(\R^d)$ by Theorem~\ref{thm:Ano-strong-cc}.
This extends the case $d=3$ due to Choi and Goldman (see the proof of Theorem~\ref{thm:Hitchin-comp}).
\end{ex}

\begin{ex}
For $n\geq 2$, any maximal representation $\rho : \pi_1(S)\to\SO(2,n)$ as in Section~\ref{subsec:deform-Fuchsian-higher-rank} preserves a nonempty properly convex open subset of $\PP(\R^{n+2})$, contained in $\HH^{2,n-1} = \{ [v]\in\PP(\R^{n+2}) \,|\, \langle v,v\rangle_{2,n} <\nolinebreak 0\}$ (see \cite{ctt19,dgk18}).
Therefore these representations are strongly convex cocompact in $\PP(\R^{n+2})$ by Theorem~\ref{thm:Ano-strong-cc}, and in fact \emph{$\HH^{2,n-1}$-convex cocompact} as in Section~\ref{subsec:higher-Teich} (see the comments after Theorem~\ref{thm:Ano-strong-cc}).
\end{ex}

Theorem~\ref{thm:Ano-strong-cc} can also be used to construct new examples of Anosov representations.
One source of examples comes from representations of Coxeter groups as linear reflection groups.
Recall that a Coxeter group is a group with a presentation by generators and relations of the form
\begin{equation} \label{eqn:Coxeter-group}
W = \langle s_1,\dots,s_N ~|~ (s_i s_j)^{m_{i,j}} = e \quad\forall 1\leq i,j\leq N\rangle
\end{equation}
where $m_{i,i} = 1$ (\ie $s_i$ is an involution) and $m_{i,j}\in\{2,3,4,\dots\}\cup\{\infty\}$ for all $i\neq j$.
(By convention, $(s_i s_j)^{\infty}=e$ means that $s_is_j$ has infinite order in the group~$W$.)
Vinberg \cite{vin71} developed a theory of representations of~$W$ \emph{as a reflection group} in a finite-dimensional real vector space~$V$: these are by definition representations $\rho : W\to\GL(V)$ such that each $\rho(s_i)$ is a linear reflection in a hyperplane of~$V$ and the configuration of these reflections is such that $\rho$ is injective, discrete, and the associated fundamental polytope has nonempty interior.
These representations may preserve a nondegenerate quadratic form on~$V$ (\eg the image of~$\rho$ could be a discrete subgroup of $\OO(n,1)$ generated by orthogonal reflections in the faces of a right-angled polyhedron of~$\HH^n$ as in Section~\ref{subsec:ex-rk1-cc-gf}), but in general they need not preserve any nonzero quadratic form.
Representations of $W$ as a reflection group constitute a subset $\Hom_{\mathrm{refl}}(W,\GL(V))$ of $\Hom(W,\GL(V))$ which is semialgebraic (defined by finitely many equalities and inequalities).

\begin{ex}[{\cite{dgk18,dgklm,lm19}}] \label{ex:hyp-Cox-cc}
Let $W$ be a Coxeter group in $N$ generators as in \eqref{eqn:Coxeter-group}.
Suppose $W$ is infinite and Gromov hyperbolic.
Then for any $d\geq N$ there exist representations $\rho : W\to\SL^{\pm}(d,\R)$ of $W$ as a reflection group which are strongly convex cocompact in $\PP(\R^d)$; for $d\geq 2N-2$, they constitute the full interior of $\Hom_{\mathrm{refl}}(W,\GL(d,\R))$.
By Theorem~\ref{thm:Ano-strong-cc}, these representations are $P_1$-Anosov.
\end{ex}

By \cite{dfwz}, a conclusion similar to that of Example~\ref{ex:hyp-Cox-cc} holds if $W$ is an infinite Gromov hyperbolic group which is not necessarily a Coxeter group, but which embeds into a right-angled Coxeter group as a so-called \emph{quasiconvex subgroup}.
Using celebrated work of Agol and Haglund--Wise, this provides Anosov representations for a large class of infinite Gromov hyperbolic groups, namely all those which admit a properly discontinuous and cocompact action on a CAT(0) cube complex.

One can also use the geometric interpretation of Anosov representations from Theorem~\ref{thm:Ano-strong-cc} to prove that free products $\Gamma_1 * \Gamma_2$ of Anosov subgroups $\Gamma_1,\Gamma_2$ are Anosov \cite{dgk-ex-cc}, using a generalisation of the ping pong arguments of Sections \ref{subsec:ex-rank-1}--\ref{subsec:ping-pong-higher-rank}.
For instance, for $1\leq i\leq d-1$, let $\tau_i : \SL(d,\R)\to\SL(S^2(\Lambda^i\R^d))$ be the second symmetric power of the $i$-th exterior power of the standard representation as in Corollary~\ref{cor:i-Ano-strong-cc-no-hyp}, let $V'_i := S^2(\Lambda^i\R^d)\oplus\R$, and let $\tau'_i : \SL(d,\R) \to \SL(V'_i)$ be the direct sum of $\tau_i$ and of the trivial representation.
Then the following holds.

\begin{ex}[\cite{dgk-ex-cc}]
Let $1\leq i\leq d-1$ and let $\Gamma_1,\Gamma_2$ be any discrete subgroups of $\SL(d,\R)$.
Then there exists $g\in\SL(V'_i)$ such that the representation $\rho : \Gamma_1 * g\Gamma_2g^{-1} \to \SL(V'_i)$ induced by the restrictions of $\tau'_i$ to $\Gamma_1$ and $g\Gamma_2g^{-1}$ has finite kernel and discrete image.
If moreover $\Gamma_1$ and~$\Gamma_2$ are $P_i$-Anosov, then we can choose $g$ so that $\rho$ is $P_1$-Anosov.
\end{ex}

(Note that beyond Anosov representations, this construction can be used to prove that the free product of two $\Z$-linear groups is $\Z$-linear, and that there exist Zariski-dense discrete subgroups of $\SL(V'_i)$ which are not lattices but contain cocompact lattices of $\tau'_i(\SL(d-1,\R))$: see~\cite{dgk-ex-cc}.)

We refer to \cite{dk-maskit,dk-maskit-bis} for other combination theorems for Anosov representations which do not use Theorem~\ref{thm:Ano-strong-cc}.

Finally we note that, although we have seen many constructions of Anosov representations above, it is expected that not every linear Gromov hyperbolic group admits an Anosov representation into some noncompact semisimple Lie group; a concrete example remains to be found.

\subsection{Generalisations of Anosov subgroups}

In the past few years, several fruitful generalisations of Anosov subgroups have appeared, which are currently being actively investigated.
These generalisations exploit both the dynamical definition of Anosov subgroups from Section~\ref{subsec:Anosov} and their geometric characterisation from Section~\ref{subsec:Ano-cc}.
Let us briefly mention three of these generalisations.

\subsubsection{More general convex cocompact subgroups}

We just saw in Theorem~\ref{thm:Ano-strong-cc} and Corollaries \ref{cor:Ano-strong-cc-no-hyp} and~\ref{cor:i-Ano-strong-cc-no-hyp} that Anosov representations can be characterised geometrically by a strong convex cocompactness condition in projective space.
Here \emph{strong} refers to the regularity imposed on the properly convex open set~$\Omega$ (its boundary $\partial\Omega$ should be $C^1$ and contain no segments).

It is natural to try to generalise Anosov representations by relaxing this strong regularity requirement.
Removing it altogether in Definition~\ref{def:strong-proj-cc} leads to a notion which is not stable under small deformations (see \cite{dgk-proj-cc,dgk-ex-cc}).
Instead, we impose the following mild condition, which relies on the notions of \emph{full orbital limit set} and \emph{convex core}.

\begin{defn}[{\cite{dgk-proj-cc}}] \label{def:proj-cc}
Let $\Omega$ be a properly convex open subset of~$\PP(\R^d)$.
Let $\Gamma_0$ be a group and $\rho : \Gamma_0\to\mathrm{Aut}(\Omega)\subset\PGL(d,\R)$ a representation.
\begin{itemize}
  \item The \emph{full orbital limit set} $\Lambdao_{\rho(\Gamma_0)}(\Omega)$ of $\rho(\Gamma_0)$ in~$\Omega$ is the set of all accumulation points in $\partial\Omega$ of all possible $\rho(\Gamma_0)$-orbits of~$\Omega$.
  \item The \emph{convex core} $\Ccore_{\rho(\Gamma_0)}(\Omega) \subset \Omega$ of $\rho(\Gamma_0)$ is the convex hull of $\Lambdao_{\rho(\Gamma_0)}(\Omega)$ in~$\Omega$.
  \item The action of $\Gamma_0$ on $\Omega$ via~$\rho$ is \emph{convex cocompact} if it is properly discontinuous and if there exists a nonempty $\rho(\Gamma_0)$-invariant convex subset $\mathcal{C}$ of~$\Omega$ such that $\rho(\Gamma_0)\backslash\mathcal{C}$ is compact and $\mathcal{C}$ is ``large enough'' in the sense that it contains the convex core $\Ccore_{\rho(\Gamma_0)}(\Omega)$.
\end{itemize}
\end{defn}

Note that Definition~\ref{def:proj-cc} coincides with Definition~\ref{def:strong-proj-cc} when $\partial\Omega$ does not contain any nontrivial projective segments.
Indeed, in that case the full orbital limit set $\Lambdao_{\rho(\Gamma_0)}(\Omega)$ is the set of accumulation points of any single $\rho(\Gamma_0)$-orbit of~$\Omega$, hence any nonempty $\rho(\Gamma_0)$-invariant convex subset $\mathcal{C}$ of~$\Omega$ contains the convex core $\Ccore_{\rho(\Gamma_0)}(\Omega)$ (see the comments after Definition~\ref{def:strong-proj-cc}).

\begin{defn} \label{def:proj-cc-rep}
Given a group~$\Gamma_0$, we say that a representation $\rho : \Gamma_0\to\PGL(d,\R)$ is \emph{convex cocompact in $\PP(\R^d)$} if $\Gamma_0$ acts convex cocompactly via~$\rho$ on some properly convex open subset $\Omega$ of $\PP(\R^d)$.
In that case, we also say that the image $\rho(\Gamma_0)$ is \emph{convex cocompact in $\PP(\R^d)$}.
\end{defn}

As above, if $\rho$ is convex cocompact in $\PP(\R^d)$, then it has finite kernel and discrete image, and the group $\Gamma_0$ is finitely generated.

This notion turns out to be quite fruitful: by \cite{dgk-proj-cc}, the set of convex cocompact representations is open in $\Hom(\Gamma_0,\PGL(d,\R))$, and it is stable under duality and under embedding into a larger projective space; moreover, a representation $\rho : \Gamma_0\to\PGL(d,\R)$ is strongly convex cocompact in $\PP(\R^d)$ in the sense of Theorem~\ref{thm:Ano-strong-cc}.\eqref{item:Ano-strong-cc-2} if and only if it is convex cocompact in $\PP(\R^d)$ and $\Gamma_0$ is Gromov hyperbolic.
Theorem~\ref{thm:Ano-strong-cc} then shows that convex cocompact representations are generalisations of $P_1$-Anosov representations, for finitely generated infinite groups $\Gamma_0$ that are not necessarily Gromov hyperbolic, and that may therefore contain subgroups isomorphic to $\Z^2$ (see Remark~\ref{rem:hyp-gp-no-Z2}).

In fact, Weisman \cite{wei23} has recently given a dynamical characterisation of convex cocompact representations of~$\Gamma_0$ that extends the characterisation of Anosov representations of Theorem~\ref{thm:charact-Ano}.\eqref{item:i-expand}.
The expansion now takes place in various Grassmannians (not only projective space): namely, at each face of the full orbital limit set in $\partial\Omega$, there is expansion in the Grassmannian of $i$-planes of~$\R^d$ where $i-1$ is the dimension of the face.

We conclude this section by mentioning a few examples of convex cocompact groups that are not necessarily Gromov hyperbolic (\ie that are not necessarily Anosov subgroups).

\begin{ex} \label{ex:conv-div}
Let $\Gamma$ be a discrete subgroup of $\PGL(d,\R)$ \emph{dividing} (\ie acting properly discontinuously with compact quotient on) some properly convex open subset $\Omega$ of $\PP(\R^d)$.
Then $\Lambdao_{\rho(\Gamma_0)}(\Omega) = \partial\Omega$ and the action of $\Gamma$ on~$\Omega$ is convex cocompact.
By \cite{ben04}, the group $\Gamma$ is Gromov hyperbolic if and only if $\partial\Omega$ contains no segments.
Examples where $\partial\Omega$ contains segments include the symmetric divisible convex sets $\Omega_{\mathrm{sym}} \subset \PP(\mathrm{Sym}_{d'}(\R)) \simeq \PP(\R^d)$ with $d=d'(d'+1)/2\geq 6$ discussed before Corollary~\ref{cor:Ano-strong-cc-no-hyp}.
The first nonsymmetric irreducible examples were constructed in small dimensions ($4\leq d\leq 7$) by Benoist \cite{ben06}; examples in all dimensions $d\geq 4$ were recently constructed by Blayac and Viaggi~\cite{bv}.
\end{ex}

\begin{ex} \label{ex:conv-div-embed}
For $\Gamma$ dividing~$\Omega$ as in Example~\ref{ex:conv-div}, we can lift $\Gamma$ to a subgroup $\hat{\Gamma}$ of $\SL^{\pm}(d,\R)$ preserving a properly convex cone of $\R^d$ lifting~$\Omega$, and then embed $\hat{\Gamma}$ into $\PGL(D,\R)$ for some $D\geq d$.
By the result of \cite{dgk-proj-cc} mentioned above, the discrete subgroup of $\PGL(D,\R)$ obtained in this way will be convex cocompact in $\PP(\R^D)$; moreover, it will remain convex cocompact in $\PP(\R^D)$ after any small deformation in $\PGL(D,\R)$.
\end{ex}

Recall that, given a Coxeter group $W$ as in \eqref{eqn:Coxeter-group}, a subgroup of~$W$ is called \emph{standard} if it is generated by a subset of the generating set $\{ s_1,\dots,s_N\}$.
The Coxeter group $W$ is called \emph{affine} if it is irreducible (\ie it cannot be written as a direct product of two nontrivial Coxeter groups) and if it is virtually (\ie it admits a finite-index subgroup which is) isomorphic to $\Z^k$ for some $k\geq\nolinebreak 1$.
Affine Coxeter groups have been completely classified; they include the Coxeter groups of type $\tilde{A}_k$ (which are virtually isomorphic to~$\Z^k$), where we say that $W$ is of type $\tilde{A}_1$ if $N=2$ and $m_{1,2} = \infty$, and $W$ is of type $\tilde{A}_{N-1}$ for $N\geq 3$ if $m_{i,j} = 3$ for all $i\neq j$ with $|i-j| = 1$ mod~$N$ and $m_{i,j} = 2$ for all other $i\neq j$.

\begin{ex}[{\cite{dgklm}}] \label{ex:cg-cc}
As a generalisation of Example~\ref{ex:hyp-Cox-cc}, let $W$ be a Coxeter group in $N$ generators as in \eqref{eqn:Coxeter-group}.
Suppose $W$ is infinite.
Then there exists a representation $\rho\in\Hom_{\mathrm{refl}}(W,\GL(d,\R))$ which is convex cocompact in $\PP(\R^d)$ for some~$d$ if and only if any affine standard subgroup of~$W$ is of type $\tilde{A}_k$ for some $k\geq 1$ and $W$ does not contain a direct product of two infinite standard subgroups.
If this holds, then we can take any $d\geq N$ and the convex cocompact representations then constitute a large open subset of $\Hom_{\mathrm{refl}}(W,\GL(d,\R))$: see \cite[\S\,1.5]{dgklm}.
\end{ex}

Examples \ref{ex:conv-div}, \ref{ex:conv-div-embed}, and \ref{ex:cg-cc} provide many convex cocompact groups which are not Gromov hyperbolic.
(In Example~\ref{ex:cg-cc}, the group $W$ is nonhyperbolic as soon as it contains an affine standard subgroup of type $\tilde{A}_k$ with $k\geq 2$, see Remark~\ref{rem:hyp-gp-no-Z2}.)

Some of these groups are still relatively hyperbolic: \eg in Example~\ref{ex:cg-cc}, the group $W$ is relatively hyperbolic with respect to a collection of virtually abelian subgroups of rank $\geq 2$ (see \cite[Cor.\,1.7]{dgklm}).
We refer to \cite{iz23} for general results about the structure of relatively hyperbolic groups which are convex cocompact in $\PP(\R^d)$ and about the geometry of the associated convex sets.
On the other hand, Example~\ref{ex:conv-div} includes, for $d=d'(d'+1)/2\geq 6$, discrete subgroups of $\PGL(d,\R)$ which divide a symmetric properly convex open set $\Omega_{\mathrm{sym}} \subset \PP(\R^d) \simeq \PP(\mathrm{Sym}_{d'}(\R))$ and which are isomorphic to cocompact lattices of $\PGL(d',\R)$, hence \emph{not} relatively hyperbolic (see Section~\ref{subsec:rank}).
Further examples of convex cocompact groups which are not relatively hyperbolic can be constructed \eg using free products inside larger projective spaces: see \cite{dgk-ex-cc}.

\subsubsection{Relatively Anosov subgroups}

Kapovich--Leeb \cite{kl-rel-Ano} and Zhu \cite{zhu21,zhu23} have developed notions of a \emph{relatively Anosov representation} of a relatively hyperbolic group into a noncompact semisimple Lie group~$G$, which generalise the notion of an Anosov representation of a hyperbolic group into~$G$ from Section~\ref{subsec:Anosov}.
They obtain various characterisations similar to those of Theorem~\ref{thm:charact-Ano}.
The original definition of Anosov representations using flows (Definition~\ref{def:Ano} and Condition~\ref{cond:Ano}) is recovered in this more general setting by recent work of Zhu and Zimmer~\cite{zz-rel-Ano-1}.

Extending Fact~\ref{fact:Ano-open}, if $\Gamma_0$ is relatively hyperbolic with respect to a collection of subgroups (called \emph{peripheral subgroups}), then relatively Anosov representations of $\Gamma_0$ into a given~$G$ are stable under small deformations that preserve the conjugacy class of the image of each peripheral subgroup \cite{kl-rel-Ano,zz-rel-Ano-1}.

Any relatively Anosov representation $\rho : \Gamma_0\to G$ has finite kernel and discrete image $\rho(\Gamma_0)$, called a \emph{relatively Anosov subgroup} of~$G$.
There are many examples of relatively Anosov subgroups (see \cite{kl-rel-Ano,zz-rel-Ano-2}), including:
\begin{itemize}
  \item geometrically finite subgroups of~$G$ for $\Rrank(G) = 1$ (Definition~\ref{def:cc-gf-rank-1}),
  \item some of the Schottky groups of Section~\ref{subsec:ping-pong-higher-rank},
  \item the images of certain compositions $\tau\circ\sigma_0 : \Gamma_0\to G$ where $\sigma_0 : \Gamma_0\to G'$ is a geometrically finite representation into a semisimple Lie group~$G'$ with $\Rrank(G') = 1$ and $\tau : G'\to G$ is a representation with compact kernel (\eg Fact~\ref{fact:cc-embed-Ano} generalises to the relative setting);
  \item similarly to Section~\ref{subsec:deform-Fuchsian-higher-rank}, small deformations in~$G$ of such $\tau\circ\sigma_0(\Gamma_0)$, preserving the conjugacy class of the image of each peripheral subgroup;
  \item certain representations of $\PSL(2,\Z)$ into $\PGL(3,\R)$ constructed by Schwartz \cite{sch93} by iterating Pappus's theorem (see \cite[\S\,13]{zz-rel-Ano-2});
  \item for a finite-volume hyperbolic surface~$S$, the images of positive (in the sense of Fock--Goncharov \cite{fg06}) type-preserving representations of $\Gamma_0 = \pi_1(S)$ into a real split simple Lie group~$G$, see \cite{czz22} (for closed~$S$, these coincide with the Hitchin representations of Sections \ref{subsec:deform-Fuchsian-higher-rank}--\ref{subsec:higher-Teich});
  \item discrete subgroups of $\PGL(d,\R)$ preserving a properly convex open subset $\Omega$ of $\PP(\R^d)$ with strong regularity ($\partial\Omega$ is $C^1$ with no segments), and whose action on~$\Omega$ is geometrically finite in the sense of \cite{cm14}.
\end{itemize}

It would be interesting to determine whether relatively Anosov representations of relatively hyperbolic groups can also be fully characterised geometrically similarly to Theorem~\ref{thm:Ano-strong-cc} and Corollaries \ref{cor:Ano-strong-cc-no-hyp}--\ref{cor:i-Ano-strong-cc-no-hyp}.

\subsubsection{Extended geometrically finite subgroups}

Recently, Weisman \cite{wei-egf-def,wei-egf-ex} has introduced a notion of \emph{extended geometrically finite} (or \emph{EGF} for short) representation of a relatively hyperbolic group.
This is a dynamical notion, which extends a dynamical characterisation of Anosov representations in terms of multicones \cite{bps19}.
EGF representations include all Anosov or relatively Anosov representations, all representations of relatively hyperbolic groups which are convex cocompact in the sense of Definition~\ref{def:proj-cc-rep}, as well as other examples (see \cite[Th.\,1.5--1.7]{wei-egf-ex} and \cite[Prop.\,6.5 \& Rem.\,6.2]{bv}).
They are stable under certain small deformations, called \emph{peripherally stable}, for which the dynamics of the peripheral subgroups does not degenerate too much.

On the other hand, it would be interesting to define a general notion of geometric finiteness in convex projective geometry (involving properly convex open subsets $\Omega$ of $\PP(\R^d)$ where $\partial\Omega$ may contain segments or not be $C^1$), and to make the link with Weisman's EGF representations.
A good notion of geometric finiteness should contain as a particular case the notion of convex cocompactness from Definition~\ref{def:proj-cc}.
More precisely, a convex projective manifold $M = \Gamma\backslash\Omega$ should be geometrically finite if its convex core $\Gamma\backslash\Ccore_{\Gamma}(\Omega)$ (see Definition~\ref{def:proj-cc}) is covered by a compact piece and finitely many ends of~$M$, called \emph{cusps}, with a controlled geometry.
It is not completely clear what the right definition of a cusp should be.
Following Cooper, Long, and Tillmann \cite{clt15}, one could define a (full) cusp to be the image in~$M$ of some convex open subset of~$\Omega$ whose stabiliser in~$\Gamma$ is infinite and does not contain any hyperbolic element (\ie any element of this stabiliser has all its complex eigenvalues of the same modulus); in that case, the cusp is diffeomorphic to the direct product of $\R$ with an affine $(d-2)$-dimensional manifold called the \emph{cusp cross-section}, and the stabiliser of the cusp is virtually nilpotent \cite[Th.\,5.3]{clt15}.
The cusp is said to have \emph{maximal rank} if the cross-section is compact.
A more general notion of cusp of maximal rank, where the stabiliser may contain hyperbolic elements but is still assumed to be virtually nilpotent, was studied in \cite{bcl20}.
A notion of geometric finiteness involving only such generalised cusps of maximal rank was introduced and studied in \cite{wol-PhD}, where it was characterised in dynamical terms.
Examples (both of finite and infinite volume) were constructed in \cite{bm20,bob19} as small deformations of finite-volume real hyperbolic manifolds, using a stability result from \cite{clt18}; the corresponding representations are EGF by \cite{wei-egf-ex}.
On the other hand, the study of convex projective cusps of nonmaximal rank, possibly allowing for hyperbolic elements, is still at its infancy, and a good general notion of geometric finiteness in this setting still remains to be found, together with appropriate dynamical characterisations.

\vspace{1cm}

\noindent\textbf{Acknowledgements.}
These notes grew out of a minicourse given at Groups St Andrews 2022 in Newcastle.
I would like to warmly thank the organisers (Colin Campbell, Martyn Quick, Edmund Robertson, Colva Roney-Dou\-gal, and David Stewart) for a very stimulating and enjoyable meeting.

I am grateful to Pierre-Louis Blayac, Jean-Philippe Burelle, Jeffrey Danciger, Sami Douba, Balthazar Fl\'echelles, Ilia Smilga, J\'er\'emy Toulisse, and the referee for many useful comments on a preliminary version of this text, and to Florian Stecker for providing Figure~\ref{fig:Barbot}.

This project received funding from the European Research Council (ERC) under the European Union's Horizon 2020 research and innovation programme (ERC starting grant DiGGeS, grant agreement No. 715982).
The final proofreading was done at the Institute for Advanced Study in Princeton, supported by the National Science Foundation under Grant No. DMS-1926686.


\end{document}